\numberwithin{equation}{section}
\theoremstyle{plain}
\newtheorem{theorem}{Theorem}[section]
\newtheorem{lemma}[theorem]{Lemma}
\newtheorem{proposition}[theorem]{Proposition}
\newtheorem{corollary}[theorem]{Corollary}
\newtheorem{conjecture}[theorem]{Conjecture}
\theoremstyle{definition}
\newtheorem{example}[theorem]{Example}
\theoremstyle{remark}
\newtheorem{remark}[theorem]{Remark}
\DeclareMathOperator{\QS}{QS}
\newcommand{\QSo}{\QS^\circ}
\newcommand{\QSx}{\QS^\times}
\newcommand{\adj}{\mathrm{ad}}
\newcommand{\coadj}{\mathrm{coad}}
\DeclareMathOperator{\IG}{IG}
\DeclareMathOperator{\OG}{OG}
\DeclareMathOperator{\Fl}{Fl}
\DeclareMathOperator{\Sp}{Sp}
\DeclareMathOperator{\QH}{QH}
\DeclareMathOperator{\QHc}{QH_{can}}
\DeclareMathOperator{\BQH}{BQH}
\DeclareMathOperator{\Ext}{Ext}
\DeclareMathOperator{\Hom}{Hom}
\DeclareMathOperator{\Aut}{Aut}
\DeclareMathOperator{\Spec}{Spec}
\DeclareMathOperator{\Rep}{Rep}
\DeclareMathOperator{\SO}{SO}
\DeclareMathOperator{\PSO}{PSO}
\DeclareMathOperator{\Spin}{Spin}
\DeclareMathOperator{\Pic}{Pic}
\DeclareMathOperator{\Conv}{Conv}
\DeclareMathOperator{\rk}{rk}
\newcommand{\bfP}{\mathbf{P}}
\newcommand{\E}{\mathbf{E}}
\newcommand{\bfS}{\mathbf{S}}
\DeclareMathOperator{\SL}{SL}
\DeclareMathOperator{\GL}{GL}
\DeclareMathOperator{\PGL}{PGL}
\DeclareMathOperator{\tGL}{{\widetilde{GL}}}
\newcommand{\bA}{{\mathbb A}}
\newcommand{\bC}{{\mathbb C}}
\newcommand{\bL}{{\mathbb L}}
\newcommand{\bR}{{\mathbb R}}
\newcommand{\bS}{{\mathbb S}}
\newcommand{\bP}{{\mathbb P}}
\newcommand{\bQ}{{\mathbb Q}}
\newcommand{\bZ}{{\mathbb Z}}
\newcommand{\rA}{\mathrm{A}}
\newcommand{\rB}{\mathrm{B}}
\newcommand{\rC}{\mathrm{C}}
\newcommand{\rD}{\mathrm{D}}
\newcommand{\rE}{\mathrm{E}}
\newcommand{\rF}{\mathrm{F}}
\newcommand{\rG}{\mathrm{G}}
\newcommand{\rH}{\mathrm{H}}
\newcommand{\rK}{\mathrm{K}}
\newcommand{\rL}{\mathrm{L}}
\newcommand{\rP}{\mathrm{P}}
\newcommand{\rT}{\mathrm{T}}
\newcommand{\rTs}{\mathrm{T}_{\mathrm{short}}}
\newcommand{\rV}{\mathrm{V}}
\newcommand{\rW}{\mathrm{W}}
\newcommand{\rs}{\mathrm{s}}
\newcommand{\cA}{\mathcal{A}}
\newcommand{\cB}{\mathcal{B}}
\newcommand{\cC}{\mathcal{C}}
\newcommand{\cE}{\mathcal{E}}
\newcommand{\cO}{\mathcal{O}}
\newcommand{\cR}{\mathcal{R}}
\newcommand{\cS}{\mathcal{S}}
\newcommand{\cU}{\mathcal{U}}
\newcommand{\cV}{\mathcal{V}}
\newcommand{\cF}{\mathcal{F}}
\newcommand{\fS}{\mathfrak{S}}
\newcommand{\Db}{{\mathbf D^{\mathrm{b}}}}
\begin{document}

\title{Residual categories for (co)adjoint Grassmannians\\in classical types}

\author{Alexander Kuznetsov}

\address{
\parbox{0.95\textwidth}{
Algebraic Geometry Section, Steklov Mathematical Institute of Russian Academy of Sciences,
8 Gubkin str., Moscow 119991 Russia
\smallskip
}
}

\subjclass[2010]{14M17,14N35,18E30}
\keywords{Lefschetz collections, residual categories, quantum cohomology, adjoint and coadjoint homogeneous varieties}

\email{akuznet@mi-ras.ru}

\author{Maxim Smirnov}
\address{
\parbox{0.95\textwidth}{
Universit\"at Augsburg,
Institut f\"ur Mathematik,
Universit\"atsstr.~14,
86159 Augsburg,
Germany
\smallskip
}}
\email{maxim.smirnov@math.uni-augsburg.de}

\thanks{This work is supported by the Russian Science Foundation under grant~19-11-00164.}

\maketitle

\begin{abstract}
In our previous paper we suggested a conjecture relating the structure of the small quantum cohomology ring
of a smooth Fano variety of Picard number~1 to the structure of its derived category of coherent sheaves.
Here we generalize this conjecture, make it more precise, and support by the examples of (co)adjoint homogeneous varieties
of simple algebraic groups of Dynkin types~$\rA_n$ and~$\rD_n$,
i.e., flag varieties~$\Fl(1,n;n+1)$ and isotropic orthogonal Grassmannians~$\OG(2,2n)$;
in particular we construct on each of those an exceptional collection invariant with respect to the entire automorphism group.

For $\OG(2,2n)$ this is the first exceptional collection proved to be full.
\end{abstract}

\section{Introduction}

This paper is devoted to the study of derived categories of coherent sheaves on homogeneous varieties of semisimple algebraic groups
and the relation to their quantum cohomology.

Recall that Dubrovin's conjecture (see~\cite{Du}) predicts,
that the existence of a full exceptional collection in the bounded derived category $\Db(X)$ of coherent sheaves
on a smooth projective variety $X$ is equivalent to the generic semisimplicity of its big quantum cohomology ring~$\BQH(X)$
(for background on quantum cohomology we refer to~\cites{Manin, FP}).
The big quantum cohomology ring is usually very hard to compute, in contrast to the small quantum cohomology $\QH(X)$.
Of course, if $\QH(X)$ is generically semisimple, then so is $\BQH(X)$.
Thus, from Dubrovin's conjecture we conclude that generic semisimplicity of~$\QH(X)$ should imply
the existence of a full exceptional collection in~$\Db(X)$.
On the other hand, it was observed that the opposite implication is not true, see~\cites{CMKMPS,GMS,Ke,Pe}.

This observation suggests that on the one hand, some mildly non-simple factors of the general fiber of $\QH(X)$
should not obstruct the existence of a full exceptional collection in $\Db(X)$,
and on the other hand, generic semisimplicity of $\QH(X)$ should have stronger implications for~$\Db(X)$
than just the existence of a full exceptional collection.

In~\cite{KS2020} we stated for varieties of Picard number~1 a conjecture saying that
the structure of the general fiber of $\QH(X)$ determines the structure of the exceptional collection on~$X$.
In this paper we suggest a more general and precise version of this conjecture and support it by new examples.

To state the conjecture we will need some notation.
First, recall that the {\sf index} of a smooth projective variety~$X$
is the maximal integer~$m$ such that the canonical class~$K_X$ is divisible by~$m$ in~$\Pic(X)$;
we usually assume that~$X$ is a Fano variety over an algebraically closed field of characteristic zero
and write
\begin{equation*}
\omega_X \cong \cO_X(-m),
\end{equation*}
where $\cO_X(1)$ is a primitive ample line bundle on~$X$.

We will say that an exceptional collection $E_1,\dots,E_k$ in $\Db(X)$ {\sf extends to a rectangular Lefschetz collection},
if the collection
\begin{equation}
\label{eq:rectangular-collection}
E_1, E_2, \dots, E_k;\
E_1(1), E_2(1), \dots, E_k(1);\ \dots;\
E_1(m-1), E_2(m-1), \dots, E_k(m-1)
\end{equation}
is also exceptional.
The orthogonal complement
\begin{equation}
\label{eq:residual-category}
\cR =
\Big\langle
E_1,
\dots, E_k;\
\dots;\
E_1(m-1),
\dots, E_k(m-1)
\Big\rangle^\perp
\subset \Db(X)
\end{equation}
is called the {\sf residual category} of the above collection.
In~\cite[Theorem~2.8]{KS2020} we checked that~$\cR$ is endowed with an autoequivalence $\tau_\cR \colon \cR \to \cR$
(called the {\sf induced polarization} of~$\cR$) such that
\begin{equation*}
\tau_\cR^m \cong \bS_\cR^{-1}[\dim X],
\end{equation*}
where $\bS_\cR$ is the Serre functor of~$\cR$.
Thus, $\tau_\cR$ plays in~$\cR$ the same role as the twist by~$\cO(1)$ plays in~$\Db(X)$.

Assume $X$ is a smooth Fano variety and let $r$ be the Picard rank of~$X$, so that $\QH(X)$ is an algebra
over the ring $\bQ[q_1,\dots,q_r]$ of functions on the affine space $\Pic(X) \otimes \bQ$ over~$\bQ$.
Let
\begin{equation*}
\QHc(X) := \QH(X) \otimes_{\bQ[q_1,\dots,q_r]} \bC
\end{equation*}
be the base change of $\QH(X)$ to the point of $\Spec(\bQ[q_1,\dots,q_r])$ corresponding to the canonical class of~$X$;
this is a $\bC$-algebra, whose underlying vector space is canonically isomorphic to $\rH^\bullet(X,\bC)$.
Assume that $\rH^{\mathrm{odd}}(X,\bC) = 0$ and let $m$ be the index of~$X$.
Then $\QHc(X)$ is commutative, so we consider the finite scheme
\begin{equation*}
\QS_X := \Spec(\QHc(X))
\end{equation*}
and call it the {\sf (canonical) quantum spectrum} of~$X$.
By Dimension Axiom for GW invariants the natural grading of $\rH^\bullet(X,\bC)$ induces a~$\bZ/m$-grading of $\QHc(X)$ such that
\begin{equation*}
\deg(\rH^{2i}(X,\bQ)) \equiv i \pmod m,
\end{equation*}
which gives rise to an action of the group $\mu_m$ on the quantum spectrum~$\QS_X$.

Furthermore, let
\begin{equation*}
-\rK_X \in \rH^2(X,\bC) \subset \QHc(X)
\end{equation*}
be the anticanonical class.
It defines a morphism of algebras $\bC[\kappa] \to \QHc(X)$, $\kappa \mapsto -\rK_X$ from a polynomial algebra in the variable~$\kappa$,
which geometrically can be understood as a morphism of schemes
\begin{equation}
\label{eq:kappa}
\kappa \colon \QS_X \to \bA^1,
\end{equation}
which is $\mu_m$-equivariant for the action on $\QS_X$ defined above and the standard action on~$\bA^1$.

\begin{remark}
The notion of quantum spectrum is parallel to that of spectral cover in the theory of Frobenius manifolds (see~\cite{Manin,Hertling}).
The map \eqref{eq:kappa} is analogous to the restriction of a Landau--Ginzburg potential to its critical locus.
\end{remark}

\begin{example}
Let $X = \bP^n$, so that~$r = 1$ and $m = n + 1$.
Then
\begin{equation*}
\QH(X) \cong \bQ[h,q]/(h^{n+1}-q),
\qquad \text{hence} \qquad
\QHc(X) \cong \bC[h]/(h^{n+1} - 1).
\end{equation*}
The quantum spectrum~$\QS_{\bP^n}$ is the reduced subscheme of~$\bA^1$ with points~$\zeta^i$, $0 \le i \le n$,
where~$\zeta$ is a primitive $(n+1)$-st root of unity.
The $\bZ/(n+1)$-grading is defined by $\deg(h) = 1$, it induces the natural action of~$\mu_{n+1}$ on~$\bA^1$
under which~$\QS_{\bP^n}$ is invariant.
Finally, we have~$-\rK_X = (n+1)h$, and the map~$\kappa$ up to rescaling is the natural inclusion $\QS_{\bP^n} \hookrightarrow \bA^1$.
\end{example}

Our conjecture is based on an analogy between the~$\mu_m$-action on~$\QS_X$ and the twist by~$\cO_X(1)$
on Lefschetz exceptional collections in~$\Db(X)$:
in this analogy the parts of~$\QS_X$ supported over
the complement of the origin and the origin of $\bA^1$ (with respect to the map~$\kappa$)
\begin{equation*}
\QSx_X := \kappa^{-1}(\bA^1 \setminus \{0\}),
\qquad
\QSo_X := \QS_X \setminus \QSx_X.
\end{equation*}
correspond to the rectangular part and the residual category of the Lefschetz collection.
Note that the $\mu_m$-action on~$\QSx_X$ is free (because it is free on~$\bA^1 \setminus \{0\}$),
hence there exists a finite subscheme $Z \subset \QSx_X$ such that the action map $\mu_m \times Z \to \QSx_X$ is an isomorphism;
the length of~$Z$ is equal to the length of~$\QSx_X$ divided by~$m$,
and we consider it as an analogue of the subcollection~$E_1,\dots,E_k$ in~\eqref{eq:rectangular-collection}.

Note that generic semisimplicity of~$\BQH(X)$ assumed in the following conjecture
implies the vanishing of~$\rH^{\mathrm{odd}}(X,\bC)$ assumed above by~\cite[Theorem 1.3]{HeMaTe}.
\begin{conjecture}
\label{conjecture:main}
Let $X$ be a Fano variety of index~$m$ over an algebraically closed field of characteristic zero and assume that the big quantum cohomology~$\BQH(X)$ is generically semisimple.
\begin{enumerate}
\item
There is an $\Aut(X)$-invariant exceptional collection $E_1,\dots,E_k$ in $\Db(X)$, where~$k$
is the length of~$\QSx_X$ divided by~$m$;
this collection extends to a rectangular Lefschetz collection~\eqref{eq:rectangular-collection} in $\Db(X)$.
\item
The residual category $\cR$ of this collection \textup(defined by~\eqref{eq:residual-category}\textup)
has a completely orthogonal $\Aut(X)$-in\-va\-ri\-ant
decomposition
\begin{equation*}
\cR = \bigoplus_{\xi \in \QSo_X} \cR_\xi
\end{equation*}
with components indexed by closed points $\xi \in \QSo_X$;
moreover, the component~$\cR_\xi$ of~$\cR$ is generated by an exceptional collection
of length equal to the length of the localization~$(\QSo_X)_\xi$ at~$\xi$.
\item
The induced polarization $\tau_\cR$ permutes the components $\cR_\xi$; more precisely,
for each point $\xi \in \QSo_X$ it induces an equivalence
\begin{equation*}
\tau_\cR \colon \cR_\xi \xrightarrow{\ \sim\ } \cR_{g(\xi)},
\end{equation*}
where $g$ is a generator of $\mu_m$.
\end{enumerate}
\end{conjecture}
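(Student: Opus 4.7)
The plan is to verify Conjecture~\ref{conjecture:main} for the families $X = \Fl(1,n;n+1)$ and $X = \OG(2,2n)$ by explicit construction rather than a general proof, and to split the verification into a quantum side (computing $\QS_X$, $\QSx_X$, $\QSo_X$, and the $\mu_m$-action) and a categorical side (producing the invariant Lefschetz collection and analyzing its residual category). On the quantum side, I would first verify generic semisimplicity of $\BQH(X)$ and then use the known presentations of the small quantum cohomology rings of these (co)adjoint Grassmannians to make $\QHc(X)$ completely explicit. From this one reads off the finite scheme $\QS_X$, the $\mu_m$-equivariant map $\kappa \colon \QS_X \to \bA^1$, the decomposition $\QS_X = \QSx_X \sqcup \QSo_X$, and in particular the integer $k = \operatorname{length}(\QSx_X)/m$ that predicts the size of the initial block of the rectangular collection, together with the list of closed points of $\QSo_X$ with their localization lengths and the $\mu_m$-orbit structure.

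On the categorical side, for part~(1) I would propose an explicit block $E_1,\dots,E_k$ built out of $G$-homogeneous vector bundles on $X$: in type~$\rA$, Schur functors applied to the tautological bundles of the incidence variety $\Fl(1,n;n+1)$; in type~$\rD$, spinor and tautological bundles on $\OG(2,2n)$. Exceptionality, rectangularity, and $\Aut(X)$-invariance of the collection~\eqref{eq:rectangular-collection} then reduce to Borel--Weil--Bott computations on the relevant parabolic, which are tedious but routine. For $\OG(2,2n)$ the fullness claim advertised in the abstract is genuinely harder: I would attack it via a resolution of the structure sheaf of the diagonal by homogeneous bundles of the prescribed form, in the spirit of Kapranov's resolution for ordinary Grassmannians, or alternatively via a geometric correspondence with a simpler variety (for instance a projection to a quadric) that induces a compatible semiorthogonal decomposition.

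For parts~(2) and~(3), once the residual category $\cR$ is extracted, the task is to decompose $\cR = \bigoplus_{\xi \in \QSo_X} \cR_\xi$ and to match the induced polarization $\tau_\cR$ with the $\mu_m$-action on $\QSo_X$. The strategy is to locate explicit exceptional objects inside $\cR$ and study their behaviour under $\tau_\cR$: using the identity $\tau_\cR^m \cong \bS_\cR^{-1}[\dim X]$, an object $F$ with $\tau_\cR(F) \cong F[s]$ belongs to a block corresponding to a $\mu_m$-fixed closed point of $\QSo_X$, while a $\mu_m$-orbit of length $d$ should correspond to a $d$-tuple of exceptional objects permuted cyclically by $\tau_\cR$. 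In the cases at hand the structure of $\QSo_X$ is sufficiently rigid (few closed points, with combinatorially simple orbit structure) that one can hope to identify each summand $\cR_\xi$ with the derived category of an explicit algebra or even of a point, producing a completely orthogonal decomposition by hand.

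The main obstacle, besides fullness for $\OG(2,2n)$, will be the \emph{matching} statement asserting that the number of components $\cR_\xi$, their lengths, and their $\tau_\cR$-cycle structure reproduce the closed points, localization lengths, and $\mu_m$-orbits of $\QSo_X$. Bridging the purely categorical invariant (the action of $\tau_\cR$ on exceptional generators of $\cR$) with the purely quantum invariant (the $\mu_m$-orbit and scheme structure on $\QSo_X$) requires either an explicit geometric or algebraic model for $\cR$ that transparently sees the quantum spectrum, or an indirect argument comparing characters of $\tau_\cR$ on additive invariants of $\cR$ with the trace of multiplication by $-\rK_X$ on $\QHc(X)$. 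Establishing this bridge case by case, and especially uniformly in the parameter $n$ for the $\OG(2,2n)$ family, is where I expect the bulk of the technical work to lie.
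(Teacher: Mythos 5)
This statement is a \emph{conjecture}: the paper does not prove it, and neither could your plan, since checking two families of homogeneous varieties cannot establish a statement about all Fano varieties with generically semisimple $\BQH$. To your credit you say so explicitly, and what you propose is essentially the same programme the paper carries out as supporting evidence (Theorem~\ref{theorem:intro}): take the quantum-side computation of $\QSo_X$ as known input (the paper quotes~\cite{PeSm}, Theorem~\ref{theorem:coadjoint}), construct an explicit $\Aut(X)$-invariant rectangular Lefschetz collection from homogeneous bundles, and identify the residual category. One simplification you miss: for these families $\QSo_X$ is a \emph{single} non-reduced point, so the decomposition in part~(2) has one summand, part~(3) is vacuous, and the elaborate ``bridge'' you describe between the $\tau_\cR$-cycle structure and the $\mu_m$-orbits of $\QSo_X$ is not needed; the only thing to match is the length of the exceptional collection generating $\cR$ against the length of the localization, which the paper does by direct computation of $\cR$ as $\Db(\rA_n)$ or $\Db(\rD_n)$.

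Where your route genuinely diverges is the fullness of the collection on $\OG(2,2n)$. You propose a Kapranov-style resolution of the diagonal or a correspondence with a quadric; the paper instead restricts to the hyperplane sections $\OG(2,v^\perp)\cong\OG(2,2n-1)$ cut out by non-isotropic vectors $v$, where a full Lefschetz collection is already known from~\cite{Ku08a}, and uses the Koszul resolution~\eqref{eq-koszul-resolution} together with the key closure properties~\eqref{kl:twist}--\eqref{kl:tensor} (stability of $\langle\E\rangle$ under $-\otimes\cU^\vee$, proved via the staircase complex~\eqref{eq-OG-2-2n-exact-sequence-for-Sym} and the bicomplex of Lemma~\ref{lemma-double-complex}) to conclude that any right-orthogonal object restricts to zero on every $\OG(2,v^\perp)$ and hence vanishes. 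A diagonal resolution by the prescribed bundles on $\OG(2,2n)$ is not known and would be substantially harder to produce, so this is the one place where your plan, as stated, would likely stall. In type $\rA_n$ the paper also does not need Schur functors for the rectangular part: it uses only line bundles $\cO(a,b)$, obtained by mutating the obvious $\bP^{n-1}$-bundle collection, with the exterior powers $\Lambda^i\cE$ of $\cE=\cU_n/\cU_1$ appearing only in the symmetric presentation of the residual category.
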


Note that any exceptional object on~$X$ is invariant with respect to any \emph{connected} reductive group acting on~$X$
(see~\cite[Lemma~2.2]{Po}), however, the group $\Aut(X)$ is not connected in general.
Thus, $\Aut(X)$-invariance of the collection is an extra constraint (cf.\ the discussions in~\S\ref{subsection:theorem-an}
and~\S\ref{subsection:theorem-dn}).

Considerations from the introduction of~\cite{KS2020}
provide a Homological Mirror Symmetry justification for Conjecture~\ref{conjecture:main}
(except possibly for the~$\Aut(X)$-invariance statements, and for the action of~$\tau_\cR$ on the residual category).

If the Picard rank of~$X$ is~1, so that $\QH(X)$ is a $\bQ[q]$-algebra, we have
\begin{equation*}
\QH(X) \otimes_{\bQ[q]} \bQ(q) \cong (\QH(X) \otimes_{\bQ[q]} \bQ) \otimes_\bQ \bQ(q)
\end{equation*}
because $\QH(X)$ is $\bZ$-graded with $\deg(q) = m$, hence semisimplicity of $\QHc(X)$ is equivalent to generic semisimplicity of~$\QH(X)$.
Moreover, if $\QH(X)$ is generically semisimple the length of each localization $(\QSo_X)_\xi$ is~1,
thus Conjecture~\ref{conjecture:main} predicts the existence in~$\Db(X)$ of a rectangular Lefschetz collection
whose residual category is generated by a completely orthogonal exceptional collection,
which is equivalent to the prediction of~\cite[Conjecture~1.12]{KS2020}.
In particular, numerous examples listed in the introduction to~\cite{KS2020}
together with the main result of~\cite{KS2020} support both conjectures.

Let us also discuss a couple of simple examples of varieties with higher Picard rank.

\begin{example}
\label{ex:pn-pn}
Let $X = \bP^n \times \bP^n$, so that~$r = 2$ and~$m = n + 1$.
By the quantum K\"unneth formula (see \cite{Kaufmann, KMK}) we have
\begin{equation*}
\QH(X) \cong \QH(\bP^n) \otimes_\bQ \QH(\bP^n) \cong \bQ[h_1,h_2,q_1,q_2]/(h_1^{n+1} - q_1,\ h_2^{n+1} - q_2).
\end{equation*}
The canonical class direction corresponds to $q_1 = q_2$, so the canonical quantum cohomology ring
can be computed by specializing both~$q_1$ and~$q_2$ to~1:
\begin{equation*}
\QHc(X) \cong \bC[h_1,h_2]/(h_1^{n+1} - 1,\ h_2^{n+1} - 1).
\end{equation*}
Its spectrum $\QS_{X}$ is a reduced scheme of length $(n+1)^2$ with points $(\zeta^i,\zeta^j)$, $0 \le i,j \le n$,
where~$\zeta$ is the primitive $(n+1)$-st root of unity.
The function~$\kappa$ (up to rescaling) is given by~$\kappa = h_1 + h_2$, so $\kappa(\zeta^i,\zeta^j) = \zeta^i + \zeta^j$
and
\begin{equation}
\label{eq:quantum-pn-pn}
\QSo_{X} = {}
\begin{cases}
\varnothing, & \text{if $n = 2k$ is even,}\\
\{ (\zeta^i,\zeta^{k+1+i}) \}, & \text{if $n = 2k + 1$ is odd.}
\end{cases}
\end{equation}
Furthermore, the generator of $\mu_{n+1}$ acts by~$(\zeta^i,\zeta^j) \mapsto (\zeta^{i+1},\zeta^{j+1})$;
in particular the action of~$\mu_{n+1}$ on $\QSo_{X}$ is simply transitive.

The formula~\eqref{eq:quantum-pn-pn} exhibits a difference between the case of even and odd~$n$;
it also appears on the level of derived category.
If~$n = 2k$, the collection of $2k + 1$ line bundles
\begin{equation}
\label{eq:pnpn-block0}
\cA = \langle \cO, \cO(1,0), \cO(0,1), \dots, \cO(k,0), \cO(0,k) \rangle
\end{equation}
extends to an $\Aut(X)$-invariant rectangular Lefschetz collection~$\Db(X) = \langle \cA,\dots,\cA(n)\rangle$
of total length $(n+1)(2k+1) = (n+1)^2$,
whose residual category is zero (this can be easily proved by the argument of Lemma~\ref{lemma:an-even}).

If~$n = 2k + 1$, the collection~\eqref{eq:pnpn-block0} still
extends to an $\Aut(X)$-invariant rectangular Lefschetz collection in~$\Db(X)$, this time of length~$(n+1)n$,
and it can be checked (a similar computation in a more complicated situation can be found in~\S\ref{section:an})
that its residual category is generated by~\mbox{$n + 1 = 2k + 2$} completely orthogonal exceptional vector bundles
\begin{equation*}
\begin{aligned}
F_1 &= \cO(-1,k),\\
F_2 &= \tau_\cR(F_1) 		&&\cong \hphantom{(1)}\cO \boxtimes \Omega^{k+1}(k+1),\\
F_3 &= \tau_\cR(F_2) 		&&\cong \cO(1) \boxtimes \Omega^{k+2}(k+2),\\
&\vdots \\
F_{k+2} &= \tau_\cR(F_{k+1}) 	&&\cong \cO(k) \boxtimes \Omega^{2k+1}(2k+1)	&&\cong \cO(k,-1),\\
F_{k+3} &= \tau_\cR(F_{k+2}) 		&&\cong \Omega^{k+1}(k+1) \boxtimes \cO,\\
F_{k+4} &= \tau_\cR(F_{k+3}) 		&&\cong \Omega^{k+2}(k+2) \boxtimes \cO(1),\\
&\vdots \\
F_{2k+2} &= \tau_\cR(F_{2k+1}) 	&&\cong \Omega^{2k}(2k) \boxtimes \cO(k-1)
\end{aligned}
\end{equation*}
(where isomorphisms are up to shift).
Note that the action of $\tau_\cR$ on this exceptional collection is simply transitive,
analogously to the action of $\mu_{n+1}$ on~$\QSo_{X}$.
\end{example}

\begin{example}
Let $X = (\bP^1)^n$, so that~$r = n$ and~$m = 2$.
Applying again the quantum K\"unneth formula one can check that
\begin{equation*}
\QS_{(\bP^1)^n} = {}
\{ (\pm1, \pm1, \dots, \pm1) \} \subset \bA^n
\end{equation*}
is a reduced scheme of length~$2^n$
and the function $\kappa$ is given by the sum of coordinates.
Therefore, $\QSo_{(\bP^1)^n}$ is empty when $n$ is odd, while for even $n = 2k$ it contains exactly $\binom{2k}{k}$ points,
and the $\mu_2$-action splits this set into $\tfrac12\binom{2k}{k}$ free orbits.

On the level of derived categories the same thing happens.
If $n$ is odd, a rectangular~$\Aut((\bP^1)^n)$-invariant Lefschetz collection in~$\Db((\bP^1)^n)$
with zero residual category was constructed in~\cite[Theorem~4.1]{Mironov}.
If $n = 2k$ is even, using~\cite[Theorem~4.1]{Mironov} it is easy to show that the residual category
is generated by $\binom{2k}{k}$ exceptional line bundles and the $\tau_\cR$-action swaps them (up to shift) pairwise.
\end{example}

Using other results from~\cite{Mironov} one can verify Conjecture~\ref{conjecture:main} for some other products~$(\bP^n)^k$.
In all these examples, however, the ring~$\QHc(X)$ is semisimple.
Below we discuss more intricate examples with non-semisimple ring~$\QHc(X)$,
provided by homogeneous varieties of simple algebraic groups,
where quite a lot is known both about quantum cohomology and derived categories.

Perhaps, the most interesting case here is that of \emph{adjoint} and \emph{coadjoint} homogeneous varieties.
Recall that an {\sf adjoint} (resp.\ {\sf coadjoint}) homogeneous variety of a simple algebraic group $\rG$
is the highest weight vector orbit in the projectivization of the irreducible $\rG$-representation,
whose highest weight is the highest \emph{long} (resp.\ \emph{short}) root of~$\rG$;
in particular, if the group~$\rG$ is simply laced, the adjoint and coadjoint varieties coincide.

For classical Dynkin types adjoint and coadjoint varieties are:
\begin{equation*}
\begin{array}{|c|c|c|c|}
\hline
\text{Dynkin type} &
\text{group~$\rG$} &
\text{adjoint variety} &
\text{coadjoint variety}
\\
\hline
\rA_n & \SL(n+1) & \Fl(1,n;n+1) & \Fl(1,n;n+1)
\\
\hline
\rB_n & \Spin(2n+1) & \OG(2,2n+1) & Q^{2n-1}
\\
\hline
\rC_n & \Sp(2n) & \bP^{2n-1} & \IG(2,2n)
\\
\hline
\rD_n & \Spin(2n) & \OG(2,2n) & \OG(2,2n)
\\
\hline
\end{array}
\end{equation*}
Here $\Fl(1,n;n+1)$ is the partial flag variety, $Q^k$ is a (smooth) $k$-dimensional quadric,
while~$\IG(2,2n)$, $\OG(2,2n)$, and~$\OG(2,2n+1)$ are the symplectic and orthogonal isotropic Grassmannians of 2-dimensional subspaces, respectively.
Note that the Picard rank of a (co)adjoint variety is~1, except for the $\rA_n$-case, where it is~2.

The small quantum cohomology ring of (co)adjoint varieties was computed in~\cite{BKT09,ChPe,CF99,Kim} in terms of generators and relations.
Using these results, the fiber~$\QSo_X$ of the map~$\kappa$ defined in~\eqref{eq:kappa}
was computed by Nicolas Perrin and the second named author in~\cite{PeSm}.
To state the results of~\cite{PeSm} we will need some notation.

Let $\rT(\rG)$ be the Dynkin diagram of~$\rG$, and let $\rTs(\rG)$ be the subdiagram of $\rT(\rG)$
with vertices corresponding to \emph{short} roots.
For reader's convenience we collect the resulting Dynkin types in a table:
\begin{equation*}
\begin{array}{|c|c|c|c|c|c|c|c|}
\hline
\rT & \rA_n & \rB_n & \rC_n & \rD_n & \rE_n & \rF_4 & \rG_2 \\
\hline
\rTs & \rA_n & \rA_1 & \rA_{n-1} & \rD_n & \rE_n & \rA_2 & \rA_1 \\
\hline
\end{array}
\end{equation*}
The following theorem describes~$\QS_X^\circ$ for adjoint and coadjoint varieties.

\begin{theorem}[{\cite{PeSm}}]
\label{theorem:coadjoint}
Let $X^\adj$ and $X^\coadj$ be the adjoint and coadjoint varieties of a simple algebraic group~$\rG$, respectively.
\begin{enumerate}
\item
If $\rT(\rG) = \rA_{2n}$, then $\QSo_{X^\adj} = \QSo_{X^\coadj} = \varnothing$.
\item
If $\rT(\rG) \ne \rA_{2n}$, then $\QSo_{X^\coadj}$ is a single non-reduced point and the localization of~$\QHc(X^\coadj)$ at this point
is isomorphic to the Jacobian ring of a simple hypersurface singularity of type~$\rTs(\rG)$.
\item
If $\rT(\rG)$ is simply laced, then we have $X^\adj = X^\coadj$ and~$\QSo_{X^\adj} = \QSo_{X^\coadj}$.
\item
If $\rT(\rG)$ is not simply laced, then $\QSo_{X^\adj} = \varnothing$.
\end{enumerate}
\end{theorem}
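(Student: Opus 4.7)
The plan is case-by-case across the Dynkin diagrams, starting from the explicit presentations of the small quantum cohomology rings of (co)adjoint varieties established in~\cite{BKT09,ChPe,CF99,Kim}. For each Dynkin type I would (i) specialize the quantum parameters along the canonical direction to obtain~$\QHc(X)$, (ii) compute the anticanonical class inside $\QHc(X)$ to write out the morphism~$\kappa$ of~\eqref{eq:kappa}, and (iii) determine the fiber $\kappa^{-1}(0)$ and the local rings at its closed points.

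Part~(3) is essentially automatic: in the simply laced types~$\rA$, $\rD$, $\rE$ the long and short roots coincide, hence $X^\adj = X^\coadj$ and both quantum spectra are identified tautologically. For type~$\rA_n$ the variety $X = \Fl(1,n;n+1)$ has Picard rank two, and denoting by $h_1, h_2$ the pullbacks of the hyperplane classes from the two natural projections to $\bP^n$ and $\bP^{n*}$, the known presentation of $\QH(X)$ after canonical specialization leads to a system in which $\kappa$ is, up to rescaling, a scalar multiple of $h_1 + h_2$. A direct analysis of the resulting polynomial equations, in the spirit of Example~\ref{ex:pn-pn}, then shows that the equation $\kappa = 0$ has no solutions when $n$ is even --- yielding part~(1) --- while for odd~$n$ the fiber is a single non-reduced point whose localization matches the Jacobian ring of a type~$\rA_n$ singularity, in accordance with $\rTs(\rA_n) = \rA_n$.

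For the remaining simply laced types $\rD_n$ and $\rE_n$, and for coadjoint varieties of non-simply laced types ($\rB_n$, $\rC_n$, $\rF_4$, $\rG_2$), the same strategy applies, but the quantum presentations involve additional Schubert-class generators and require more bookkeeping. I would use the quantum Chevalley and Pieri-type rules to eliminate variables step by step, reducing $\QHc(X^\coadj)$ localized at the zero of~$\kappa$ to a small system with precisely the expected Milnor number. For the adjoint varieties in non-simply laced types, part~(4) follows once one checks that the entire quantum spectrum avoids the origin of~$\kappa$: in type~$\rC_n$ this is immediate since $X^\adj = \bP^{2n-1}$ has $\QHc \cong \bC[h]/(h^{2n}-1)$ with $\kappa$ proportional to~$h$, while types $\rB_n$, $\rF_4$, $\rG_2$ follow by direct inspection of the corresponding quantum rings.

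The main obstacle --- and the real content of the theorem --- is part~(2): identifying the localization of $\QHc(X^\coadj)$ at the origin of~$\kappa$ precisely with the Jacobian ring of a simple ADE singularity of type~$\rTs(\rG)$. I would approach this by exhibiting explicit generators and relations for this local ring and then performing a Morse-type change of coordinates that brings the defining equations into Arnold's normal form. The fact that it is the \emph{short}-root subdiagram~$\rTs(\rG)$ that controls the singularity on the coadjoint side, rather than $\rT(\rG)$ itself, reflects the asymmetry between long and short roots in the quantum Chevalley formula and should emerge naturally from this normal-form analysis.
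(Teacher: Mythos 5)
The first thing to note is that the paper does not prove Theorem~\ref{theorem:coadjoint} at all: it is quoted from the reference~\cite{PeSm}, which is listed as ``in preparation,'' and the surrounding text says explicitly that the computation of $\QSo_X$ was carried out there starting from the presentations of the small quantum cohomology rings in~\cite{BKT09,ChPe,CF99,Kim}. So there is no internal argument to compare your proposal against; the most one can say is that your overall strategy (specialize the quantum parameters along the canonical direction, realize $\kappa$ as multiplication by the anticanonical class, and analyze the fiber over the origin type by type) is exactly the strategy the paper attributes to~\cite{PeSm}.

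As a proof, however, your proposal has a genuine gap, and it sits precisely where you yourself locate ``the real content of the theorem.'' For part~(2) you say you would exhibit generators and relations for the localization at the origin of $\kappa$ and then perform a change of coordinates bringing them to Arnold's normal form, but you carry out none of this: there is no computation of the length of the local ring (the Milnor number), no determination of the embedding dimension, and no argument for why the \emph{short}-root subdiagram $\rTs(\rG)$ --- rather than $\rT(\rG)$ itself or some other diagram with the same number of vertices --- governs the answer, beyond the assertion that it ``should emerge naturally.'' Since the presentations in~\cite{BKT09,ChPe,CF99,Kim} are type-specific and in several cases (e.g.\ $\IG(2,2n)$, $\OG(2,2n)$, and the exceptional types) involve nontrivial Schubert-class generators, each case requires an actual elimination and normal-form computation; without it the statement remains a plausible expectation rather than a theorem. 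A smaller but real issue: for type $\rA_n$ you propose to argue ``in the spirit of Example~\ref{ex:pn-pn},'' but $\Fl(1,n;n+1)$ is a hypersurface of bidegree $(1,1)$ in a product of two projective spaces, not a product itself, so the quantum K\"unneth formula does not apply and one must use the flag-variety presentations of~\cite{CF99,Kim}; the even/odd dichotomy you state is correct, but it does not follow from the product computation you point to.
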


A combination of Theorem~\ref{theorem:coadjoint} with Conjecture~\ref{conjecture:main}
allows us to make predictions about the structure of derived categories of adjoint and coadjoint varieties.
Our expectation is stated in the following two conjectures.

\begin{conjecture}
\label{conjecture:adjoint}
Let $X$ be the adjoint variety of a simple algebraic group~$\rG$ over an algebraically closed field of characteristic zero.
If $\rT(\rG)$ is not simply laced, then $\Db(X)$ has a full $\Aut(X)$-invariant rectangular Lefschetz exceptional collection.
\end{conjecture}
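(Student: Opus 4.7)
My plan is to proceed by case analysis through the non-simply laced simple Dynkin types $\rC_n$, $\rB_n$, $\rG_2$, and $\rF_4$. In each case I would exhibit a block $E_1,\dots,E_k$ of $\Aut(X)$-equivariant exceptional objects of length
\[
k = \rk K_0(X)/m,
\]
prove that it extends to a rectangular Lefschetz collection~\eqref{eq:rectangular-collection} of width~$m$, and then deduce fullness from the equality of the total length with $\rk K_0(X)$. Since for all four types $\Aut(X)$ is connected (namely $\PGL_{2n}$ for $\rC_n$ and the adjoint group of $\rG$ for the others), $\Aut(X)$-invariance would follow automatically from the cited Lemma~2.2 of~\cite{Po}; the real content lies in the construction and in the proof of fullness.

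The case $\rT(\rG) = \rC_n$ is immediate, since $X = \bP^{2n-1}$ has index $m = 2n$ and Beilinson's collection $\cO, \cO(1), \dots, \cO(2n-1)$, organised as a Lefschetz collection of height $m$ with block of length~$1$, is trivially full and equivariant. For the exceptional types $\rG_2$ and $\rF_4$ I would build candidate blocks from homogeneous bundles associated to the fundamental representations of $\rG$. Both varieties are of small dimension, and $\rk K_0(X)$ is a small integer easily read off from the Schubert basis, so one can hope to verify exceptionality, rectangular Lefschetz property and fullness by direct Borel-Weil-Bott computations on the flag variety, guided by the prediction of Theorem~\ref{theorem:coadjoint}(4) that the residual category is zero.

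The main case, and the main obstacle, is $\rT(\rG) = \rB_n$ with $X = \OG(2,2n+1)$. The natural building blocks here are Schur functors applied to the tautological rank-2 subbundle $\cU$ and to the orthogonal quotient $\cU^\perp/\cU$, organised around the smooth projection
\[
p\colon \OG(2,2n+1) \longrightarrow Q^{2n-1}, \qquad (U_1 \subset U_2) \longmapsto U_1,
\]
which realizes $X$ as the relative orthogonal isotropic Grassmannian of lines in the rank-$(2n-1)$ orthogonal bundle $\cU_1^\perp/\cU_1$ over the quadric. Exceptionality and the rectangular Lefschetz property should reduce to a weight-combinatorial vanishing computable on the full isotropic flag variety via Borel-Weil-Bott.

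The delicate step is fullness. My proposal is to combine: the semiorthogonal decomposition of $\Db(X)$ over $\Db(Q^{2n-1})$ induced by $p$ using fibrewise Kapranov collections for the relative quadric; the known full exceptional collection on the quadric $Q^{2n-1}$; and Kuznetsov-Polishchuk-type staircase relations for spinor bundles, perhaps in conjunction with induction on $n$. This should yield \emph{some} full exceptional collection on $X$; the remaining and hardest task is then to reorganise it by successive mutations into the rectangular Lefschetz shape~\eqref{eq:rectangular-collection} with block length exactly $\rk K_0(X)/m$. This final repackaging, matching the combinatorics forced by Theorem~\ref{theorem:coadjoint}(4), is what I expect to be the main obstacle of the whole argument.
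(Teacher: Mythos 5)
This statement is a \emph{conjecture} in the paper, not a theorem: the authors do not prove it, and they explicitly record (just after Conjecture~\ref{conjecture:coadjoint}) which cases are known and which are not. Your case analysis gets the location of the difficulty exactly backwards. The type~$\rC_n$ case is indeed trivial ($\bP^{2n-1}$), but the type~$\rB_n$ case $X=\OG(2,2n+1)$, which you single out as ``the main case, and the main obstacle,'' is precisely \cite[Theorem~7.1]{Ku08a} and has been known since 2008; the type~$\rG_2$ case is \cite[\S6.4]{K06}. The genuinely open case is the adjoint variety of type~$\rF_4$, which you dismiss with ``one can hope to verify \dots by direct Borel--Weil--Bott computations.'' That is not an argument: no full exceptional collection on the adjoint $\rF_4$-variety is currently known, and producing one (let alone a rectangular Lefschetz one) is the entire content of the remaining open case. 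A proposal that treats the solved case as hard and the unsolved case as routine does not constitute a proof strategy.

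There is also a substantive logical gap in your fullness step. You propose to ``deduce fullness from the equality of the total length with $\rk K_0(X)$.'' An exceptional collection whose length equals the rank of $K_0$ need not be full: its orthogonal complement can be a phantom category (nontrivial with vanishing $K_0$), and such examples exist. This is why both \cite{Ku08a} for $\OG(2,2n+1)$ and the present paper's Section~\ref{subsection:fullness-dn} for $\OG(2,2n)$ prove fullness by a genuine geometric argument --- restricting to the family of sub-Grassmannians $\OG(2,v^\perp)$ cut out by non-isotropic sections of $\cU^\vee$ (Lemma~\ref{lemma:restriction} and Proposition~\ref{lemma-vanishing-of-the-restriction}) and invoking fullness on the smaller Grassmannian --- rather than by counting $K_0$-classes. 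Any correct treatment of the $\rB_n$ or $\rF_4$ cases would need an argument of this kind, and for $\rF_4$ no such argument is presently available.
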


\begin{conjecture}
\label{conjecture:coadjoint}
Let $X$ be the coadjoint variety of a simple algebraic group~$\rG$ over an algebraically closed field of characteristic zero.
Then $\Db(X)$ has an~$\Aut(X)$-invariant rectangular Lefschetz exceptional collection with residual category~$\cR$ and
\begin{enumerate}
\item
if $\rT(\rG) = \rA_n$ and $n$ is even, then $\cR = 0$;
\item
otherwise, $\cR$ is equivalent to the derived category of representations of a quiver of Dynkin type~$\rTs(\rG)$.
\end{enumerate}
\end{conjecture}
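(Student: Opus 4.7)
The plan is a case-by-case analysis along the Dynkin type of~$\rG$. For each coadjoint variety $X = X^\coadj$ one must (a)~exhibit an $\Aut(X)$-invariant starting block $E_1,\dots,E_k$ generating a rectangular Lefschetz collection in~$\Db(X)$, and (b)~identify its residual category~$\cR$ with~$\Db(Q)$ for a quiver $Q$ whose underlying graph is the Dynkin diagram $\rTs(\rG)$. Theorem~\ref{theorem:coadjoint} provides a sharp numerical prediction: the rank of~$K_0(\cR)$ equals the length of~$\QSo_X$, which by Theorem~\ref{theorem:coadjoint} equals the number of vertices of~$\rTs(\rG)$, and this pins down the number of objects in any exceptional generating set of~$\cR$.

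For the simply-laced types, the adjoint and coadjoint varieties coincide. The cases $\rT(\rG)=\rA_n$ and $\rT(\rG)=\rD_n$ are precisely the main theorems of the present paper and are invoked directly. For $\rT(\rG)=\rE_n$ (the Cayley plane, the Freudenthal variety, and the $\rE_8$-adjoint variety), I would build the Lefschetz block from $\rG$-equivariant vector bundles associated to (co)minuscule fundamental representations, verify exceptionality by Borel--Weil--Bott calculations, prove fullness by matching the length against the known Betti numbers, and compute the Ext-quiver of a set of generators of~$\cR$, aiming to recognize it as a Dynkin quiver of type~$\rE_n$.

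For the non-simply-laced types, the approach is parallel. When $\rT(\rG)=\rB_n$, the coadjoint variety is the odd-dimensional quadric $Q^{2n-1}$, whose derived category is described by Kapranov; removing the rectangular Lefschetz block $\cO,\cO(1),\dots,\cO(2n-2)$ leaves~$\cR$ generated by the unique spinor bundle, consistent with $\rTs(\rB_n)=\rA_1$. When $\rT(\rG)=\rC_n$, the coadjoint variety is~$\IG(2,2n)$; the starting point is the Lefschetz collection of Kuznetsov--Polishchuk and Samokhin, and one must identify the residual category of the expected rank $n-1$ with $\Db$ of an $\rA_{n-1}$-quiver by computing $\Ext^\bullet$ between natural generators. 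The coadjoint varieties of~$\rF_4$ and~$\rG_2$ require analogous ad~hoc constructions from the limited supply of $\rG$-homogeneous vector bundles, with expected residuals $\Db(\rA_2)$ and $\Db(\rA_1)$ respectively.

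The main obstacle is the exceptional-type case: full exceptional collections on the adjoint varieties of type~$\rE$ are not available in the required rectangular-Lefschetz form, and proving fullness typically demands a Koszul-type resolution adapted to the homogeneous structure. A second, subtler difficulty is the identification of~$\cR$ as the derived category of a \emph{specific} Dynkin quiver: one must not only find an exceptional generating set of the right length, but also show that the $\Ext^\bullet$-algebra of its objects is formal and coincides with the path algebra of the predicted Dynkin quiver. Finally, when $\Aut(X)$ is disconnected --- most notably for $\rD_4$, where triality acts on~$\OG(2,8)$, and for $\rA_n$, where the outer involution swaps the two projections of $\Fl(1,n;n+1)$ --- the construction must be manifestly symmetric under the diagram automorphisms, possibly forcing one to symmetrize an otherwise asymmetric candidate collection.
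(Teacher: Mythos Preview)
The statement you are addressing is a \emph{conjecture}, not a theorem: the paper does not prove Conjecture~\ref{conjecture:coadjoint} in full. Immediately after stating it, the paper records that the non-simply-laced coadjoint cases $\rB_n$, $\rC_n$, $\rG_2$, $\rF_4$ are already settled in the literature (\cite{KS2020}, \cite{CMKMPS}, \cite{BKS}), proves the cases $\rA_n$ and $\rD_n$ as its main result (Theorem~\ref{theorem:intro}), and explicitly writes ``We leave the remaining exceptional types $\rE_6$, $\rE_7$, $\rE_8$, and~$\rF_4$ for future work.'' So there is no ``paper's own proof'' to compare against; your proposal is an outline for attacking an open problem, and your case inventory matches the paper's discussion essentially verbatim.

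That said, your plan for the $\rE_n$ cases contains a genuine gap. You write that you would ``prove fullness by matching the length against the known Betti numbers,'' but an exceptional collection whose length equals $\rk K_0(X)$ need not be full: this implication is false in general and is precisely the hard step in every known construction on homogeneous varieties. In this paper, fullness for $\OG(2,2n)$ is obtained not by counting but by a restriction argument to the odd Grassmannians $\OG(2,2n-1)$ combined with Koszul resolutions and the already-known full collection~\eqref{eq-collection-OG-odd-I} there (Proposition~\ref{lemma-vanishing-of-the-restriction}); no analogous mechanism is currently available for the adjoint varieties of type~$\rE$. Likewise, ``computing the Ext-quiver of a set of generators of~$\cR$'' presupposes that one has already produced such generators inside the orthogonal of a rectangular block, which in turn presupposes fullness. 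Until a fullness argument is supplied for the $\rE_n$ adjoint varieties, the conjecture remains open, exactly as the paper states.
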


In non-simply laced Dynkin types~$\rB_n$, $\rC_n$, $\rF_4$, and~$\rG_2$ these expectations agree with known results about~$\Db(X)$.
Indeed, for adjoint varieties full rectangular Lefschetz decompositions were constructed in~\cite[Theorem~7.1]{Ku08a} for type~$\rB_n$,
\cite[Example~1.4]{KS2020} for type~$\rC_n$, and~\cite[\S6.4]{K06} for type~$\rG_2$.
For coadjoint varieties the residual categories were computed in~\cite[Theorem~9.6]{CMKMPS} for type~$\rC_n$,
in~\cite[Example~1.6]{KS2020} for types $\rB_n$ and~$\rG_2$,
and in~\cite[Theorem~1.4]{BKS} for type~$\rF_4$.
So, the only non-simply laced case that is still not known is that of the adjoint variety of type~$\rF_4$.

The main result of this paper is the proof of Conjecture~\ref{conjecture:coadjoint} for Dynkin types $\rA_n$ and $\rD_n$.
Since these Dynkin types are simply laced, the coadjoint and adjoint varieties coincide.

\begin{theorem}
\label{theorem:intro}
Let~$X$ be the \textup(co\textup)adjoint variety of a simple algebraic group~$\rG$ of Dynkin type~$\rA_n$ or~$\rD_n$
over an algebraically closed field of characteristic zero.
Then~$\Db(X)$ has an~$\Aut(X)$-invariant rectangular Lefschetz exceptional collection with residual category~$\cR$ and
\begin{enumerate}
\item
\label{theorem:intro:an-even}
if $\rT(\rG) = \rA_n$ and $n$ is even, then $\cR = 0$;
\item
\label{theorem:intro:an-odd}
if $\rT(\rG) = \rA_n$ and $n$ is odd, then~$\cR \cong \Db(\rA_n)$;
\item
\label{theorem:intro:dn}
if $\rT(\rG) = \rD_n$,
then~$\cR \cong \Db(\rD_n)$;
\end{enumerate}
where $\Db(\rA_n)$ and $\Db(\rD_n)$ are the derived categories of representations
of quivers of Dynkin types~$\rA_n$ and~$\rD_n$, respectively.
\end{theorem}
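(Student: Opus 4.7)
The plan is to construct explicit $\Aut(X)$-invariant rectangular Lefschetz blocks in each case and then to compute the residual category by direct Ext calculation. For parts~\ref{theorem:intro:an-even} and~\ref{theorem:intro:an-odd}, I would exploit the realization of $X = \Fl(1,n;n+1)$ as the incidence divisor of bidegree $(1,1)$ in $\bP^n \times (\bP^n)^\vee$; with $\cO_X(1) := \cO_X(1,1)$ one has $\omega_X \cong \cO_X(-n)$, and the two mutually dual projective bundle projections $p_1, p_2$ onto $\bP^n$ give access to Orlov's projective bundle formula. As Lefschetz block I would take the $\Aut(X)$-invariant collection of line bundles
\begin{equation*}
\cA = \bigl\langle \cO_X,\, \cO_X(1,0),\, \cO_X(0,1),\, \dots,\, \cO_X(k,0),\, \cO_X(0,k) \bigr\rangle,
\qquad k = \lfloor n/2 \rfloor,
\end{equation*}
obtained by restriction from Example~\ref{ex:pn-pn}. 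Exceptionality of the rectangular extension $\cA, \cA(1), \dots, \cA(n-1)$ follows from Bott vanishing on $\bP^n \times (\bP^n)^\vee$ combined with the Koszul resolution $0 \to \cO(-1,-1) \to \cO \to \cO_X \to 0$ of the divisor.

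For part~\ref{theorem:intro:an-even} with $n = 2k$, a direct length count gives $\lvert\cA\rvert \cdot n = (n+1) \cdot n = \rk K_0(X)$; fullness is then established by matching the line bundles in the rectangular extension with the full exceptional collection produced by Orlov's theorem applied to $p_1$, after a finite sequence of explicit mutations. For part~\ref{theorem:intro:an-odd} with $n = 2k+1$, the block has only $n$ elements, leaving a discrepancy of exactly $n$ in length; the residual category $\cR$ is then generated by $n$ specific bundles $F_1, \dots, F_n$ obtained by iterating $\tau_\cR$ from a seed such as $\cO_X(-1,k)$, in complete analogy with the odd case of Example~\ref{ex:pn-pn}. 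I would compute the groups $\Ext^\bullet(F_i, F_j)$ by pushing the Koszul resolution forward and applying Borel--Bott--Weil on $\bP^n \times (\bP^n)^\vee$, showing that only consecutive pairs admit a single one-dimensional $\Hom$ and that all higher Ext groups vanish; this identifies $\cR$ with $\Db(\rA_n)$ and simultaneously verifies the cyclic action of $\tau_\cR$ predicted by Conjecture~\ref{conjecture:main}.

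For part~\ref{theorem:intro:dn}, with $X = \OG(2,2n)$ of Picard rank~$1$, index~$m = 2n-2$ and dimension $4n-7$, I would assemble the Lefschetz block from twists by $\cO(i)$ of natural bundles built out of the tautological rank-$2$ subbundle $\cU$ --- namely $\cO$, $\cU^\vee$, $\mathrm{Sym}^2\cU^\vee$, \dots\ together with half-spinor-type bundles coming from the Dynkin type~$\rD_n$ --- chosen so that the entire block is $\Aut(X)$-invariant (in particular invariant under the outer involution of~$\rD_n$). Exceptionality reduces to Borel--Bott--Weil, most effectively executed via the two projections from the partial flag variety $\OG(1,2,2n)$: the $\bP^1$-fibration onto $\OG(2,2n)$ and the $Q^{2n-4}$-bundle onto the quadric $Q^{2n-2} = \OG(1,2n)$.

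The main obstacle of the proof --- and, I expect, the bulk of the technical work --- is to establish \emph{fullness} of this rectangular Lefschetz collection on $\OG(2,2n)$, a result which, as noted in the abstract, is new even without the Lefschetz structure. The natural strategy combines Kapranov's full exceptional collection on the quadric $Q^{2n-2}$, a careful analysis of the two projections out of the partial isotropic flag $\OG(1,2,2n)$, and a systematic mutation argument to descend a full collection to $\OG(2,2n)$ and match it with the rectangular extension of the proposed block. Once fullness is in hand, the residual category has exactly $n$ explicit generators; the final step is to compute their Ext-algebra by another Borel--Bott--Weil calculation and to recognize it as the path algebra of a $\rD_n$ quiver, by exhibiting the characteristic trivalent node adjacent to three objects together with a tail of length $n-3$ and proving the vanishing of all remaining Ext groups.
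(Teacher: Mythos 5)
Your treatment of type $\rA_n$ is essentially the paper's own argument: the same $\Aut(X)$-invariant block of line bundles, exceptionality via the Koszul resolution of the $(1,1)$-divisor and cohomology vanishing on $\bP(V)\times\bP(V^\vee)$, fullness by mutating the Orlov collection coming from the projective bundle $p_1$, and identification of the residual generators for odd $n$. One caveat: the residual objects for $\Fl(1,n;n+1)$ are \emph{not} completely orthogonal (the residual category is a connected $\rA_n$ quiver, reflecting the non-reduced point of $\QSo_X$), so the claimed ``complete analogy with the odd case of Example~\ref{ex:pn-pn}'' is misleading --- there the objects form a completely orthogonal $\tau_\cR$-orbit, whereas here the key computation is precisely the nonvanishing one-dimensional $\Ext$'s between consecutive objects, which the paper extracts from the mutation sequences \eqref{eq:mutation-1}--\eqref{eq:mutation-2} rather than from iterating $\tau_\cR$.

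The genuine gap is in type $\rD_n$, exactly at the step you yourself flag as the main obstacle. Your plan for fullness --- Kapranov's collection on $Q^{2n-2}$, the two projections from $\OG(1,2,2n)$, and ``a systematic mutation argument to descend'' to $\OG(2,2n)$ --- is not an argument but a restatement of the difficulty: a full exceptional collection on the total space of the $\bP^1$-bundle $\OG(1,2,2n)\to\OG(2,2n)$ does not descend to the base unless you can explicitly match it, after mutations, with a collection of the form $\langle \pi^*\Db(\OG(2,2n)), \pi^*\Db(\OG(2,2n))\otimes L\rangle$, and no such matching is exhibited or known; this is precisely why fullness for $\OG(2,2n)$ was open. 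The paper's actual mechanism is entirely different: it restricts to the family of odd isotropic Grassmannians $\OG(2,v^\perp)\cong\OG(2,2n-1)$ cut out by non-isotropic sections $v$ of $\cU^\vee$, combines the Koszul resolution \eqref{eq-koszul-resolution} with the key closure properties $E\otimes\cO(1),\,E\otimes\cU^\vee\in\langle\E\rangle$ (Lemma~\ref{key-lemma}, which in turn requires the nontrivial exact sequence \eqref{eq-OG-2-2n-exact-sequence-for-Sym} built from the filtrations on $\bfS_\pm\otimes\cS_\pm$), and invokes the known full collection on $\OG(2,2n-1)$ together with Lemma~\ref{lemma:restriction}. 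Without some substitute for this restriction-and-closure mechanism your fullness claim is unsupported. Two further points: the index of $\OG(2,2n)$ is $2n-3$ (see \eqref{eq:omega-dn}), not $2n-2$ as you state --- with your value the rectangular part alone would already exhaust $\rk K_0 = 2n(n-1)$, leaving no room for a residual category of length $n$; and the full collection one actually obtains is not rectangular (it has enlarged middle blocks $\cB$ and two extra objects $\cU^{2\omega_{n\pm 1... }}$, i.e.\ $\cU^{2\omega_{n-1}}(-1)$ and $\cU^{2\omega_n}(-1)$), so identifying the $n$ residual generators requires the explicit truncations $F_i$ of \eqref{eq-OG-2-2n-exact-sequence-for-Sym}, a construction your proposal does not anticipate.
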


More precise versions of these results can be found in Theorem~\ref{theorem:an} (for type~$\rA_n$)
and Theorem~\ref{theorem:dn} (for type~$\rD_n$) in the body of the paper.
We leave the remaining exceptional types $\rE_6$, $\rE_7$, $\rE_8$, and~$\rF_4$ for future work.

Note that a part of the statement of this theorem is a construction of a full exceptional collection in~$\Db(\OG(2,2n))$,
which was not known before (see~\cite{KuPo} for a survey of results about exceptional collections on homogeneous varieties).

\begin{remark}
As one can see from Theorem~\ref{theorem:coadjoint} and Conjecture~\ref{conjecture:coadjoint},
the case of Dynkin type~$\rA_n$ with even~$n$ is somewhat special.
In this case, the Picard rank is equal to~2, and the canonical quantum cohomology ring $\QHc(X)$ is semisimple,
so a singularity of type~$\rTs(\rG) = \rA_n$ does not show up.
However, one can see this singularity in the \emph{skew-canonical} quantum cohomology ring,
i.e., the ring obtained from $\QH(X)$ by base change to the point of~$\Pic(X) \otimes \bQ$ corresponding to the line bundle $\cO(1,-1)$. It would be very interesting to find a categorical interpretation of this fact.
\end{remark}

\medskip

\noindent {\bf Acknowledgements.}
We are indebted to Nicolas Perrin for sharing with us his results of quantum cohomology computations and attracting our attention
to the coadjoint varieties of types $\rA_n$ and $\rD_n$, that eventually led to this paper.
We thank Giordano Cotti, Anton Fonarev, Sergey Galkin, and Anton Mellit for useful discussions
and the anonymous referee for their comments.
Further, we are very grateful to Pieter Belmans for his kind permission to reuse some parts of the code written for \cite{BS},
which was instrumental for this paper, and for his comments on the first draft of this paper.
Finally, M.S. would like to thank ICTP in Trieste, and MPIM in Bonn, where a part of this work was accomplished, for their hospitality.

\section{Type $\rA_n$}
\label{section:an}

In this section we prove parts~\eqref{theorem:intro:an-even} and~\eqref{theorem:intro:an-odd} of Theorem~\ref{theorem:intro},
restated in a more precise form in Theorem~\ref{theorem:an} below.
In this section we work over an arbitrary field~$\Bbbk$.

\subsection{Statement of the theorem}
\label{subsection:theorem-an}

Let $V$ be a vector space of dimension~$n + 1$.
Throughout this section we put
\begin{equation*}
X = \Fl(1,n;n+1) = \Fl(1,n;V) \subset \bP(V) \times \bP(V^\vee);
\end{equation*}
note that in this embedding $X$ is a hypersurface of bidegree $(1,1)$.

The automorphism group of $X$ is the semidirect product
\begin{equation*}
\Aut(X) \cong \PGL(V) \rtimes \bZ/2,
\end{equation*}
where the factor $\bZ/2$ acts by an outer automorphism (corresponding to the symmetry of the Dynkin diagram $\rA_n$)
that is induced by the morphism
\begin{equation*}
\sigma_B \colon \bP(V) \times \bP(V^\vee) \xrightarrow{\ B \times B^{-1}\ } \bP(V^\vee) \times \bP(V)
\end{equation*}
given by a choice of non-degenerate bilinear form $B$ on $V$.

It is elementary to construct a rectangular Lefschetz decomposition for $\Db(X)$
by using the $\bP^{n-1}$-fibration structure $X \to \bP(V)$ of $X$
(see the proofs of Lemma~\ref{lemma:an-even} and Lemma~\ref{lemma:an-standard-ec}).
This Lefschetz decomposition is automatically $\PGL(V)$-invariant and its residual category is trivial.
However, it is \emph{not} $\Aut(X)$-invariant, since the outer automorphism
takes it to a decomposition associated with the other $\bP^{n-1}$-fibration $X \to \bP(V^\vee)$,
and so does not preserve the original one.

In this section we construct in $\Db(X)$ a rectangular $\Aut(X)$-invariant Lefschetz collection
and compute its residual category.
To state the result we need some notation.
Let
\begin{equation*}
0 \hookrightarrow \cU_1 \hookrightarrow \cU_n \hookrightarrow V \otimes \cO
\end{equation*}
be the tautological flag of rank-1 and rank-$n$ subbundles in the trivial vector bundle.
We set
\begin{equation}
\label{def:ce}
\cE := \cU_n/\cU_1
\end{equation}
for the intermediate quotient.
Note that $\sigma_B^*\cE \cong \cE^\vee$ and $\det(\cE) \cong \cO(1,-1)$,
where we denote by~$\cO(a_1,a_2)$ the restriction to~$X$ of the line bundle $\cO(a_1) \boxtimes \cO(a_2)$ on $\bP(V) \times \bP(V^\vee)$.

We prove the following

\begin{theorem}
\label{theorem:an}
Set $k := \lfloor n/2 \rfloor$.
The collection of $2k + 1$ line bundles
\begin{equation}
\label{eq:ca-an}
\cA := \big\langle \cO(0,0), \cO(1,0), \cO(0,1), \cO(2,0), \cO(0,2), \dots, \cO(k,0), \cO(0,k) \big\rangle
\end{equation}
in $\Db(X)$ is exceptional and extends to an $\Aut(X)$-invariant semiorthogonal decomposition
\begin{equation}
\label{eq:rectangular-an}
\Db(X) = \langle \cR, \cA, \cA \otimes \cO(1,1), \dots, \cA \otimes \cO(n-1,n-1) \rangle
\end{equation}
where $\cR$ is the residual category.

$(1)$ If $n = 2k$ the residual category is zero.

$(2)$ If $n = 2k + 1$ the residual category is generated by the $\Aut(X)$-invariant exceptional collection
\begin{multline}
\label{eq:residual-final}
\cR = \langle \cO(-1,k), \cO(k,-1); \cE(-1,k-1), \cE^\vee(k-1,-1);
\\
\dots;
\Lambda^{k-1}\cE(-1,1), \Lambda^{k-1}\cE^\vee(1,-1);
\Lambda^k\cE(-1,0) \cong \Lambda^k\cE^\vee(0,-1) \rangle
\end{multline}
of length $n = 2k + 1$ and is equivalent to the derived category of the Dynkin quiver $\rA_n$.
\end{theorem}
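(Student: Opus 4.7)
The plan is to proceed in four stages: (i) verify that the block $\cA$ is exceptional and that the twisted blocks $\cA(s,s)$, $0 \le s \le n-1$, are semiorthogonal; (ii) check $\Aut(X)$-invariance; (iii) establish fullness when $n$ is even; and (iv) identify the residual category when $n$ is odd.

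For stage (i), every required Ext-vanishing reduces to computing $R\Gamma(X, \cO(p, q))$ for differences $(p, q)$ of line bundles in the extended collection. Using the Koszul sequence
\begin{equation*}
0 \to \cO_{\bP(V)\times\bP(V^\vee)}(-1,-1) \to \cO_{\bP(V)\times\bP(V^\vee)} \to \cO_X \to 0
\end{equation*}
together with K\"unneth, this reduces to cohomology of $\cO(d)$ on $\bP^n$. Since $\cA$ involves only line bundles with coordinates in $[0, k]$ with $k \le n/2$, the relevant differences $(p,q)$ fall in a range where the two terms of the associated long exact sequence either vanish individually or cancel, yielding all required vanishings. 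Stage (ii) is immediate: the generating set of $\cA$ is symmetric under $(a, b) \mapsto (b, a)$ and hence preserved by the outer involution $\sigma_B$, while $\PGL(V)$-invariance is automatic for line bundles.

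For stage (iii), the total length of the collection equals $(2k+1) \cdot n = \chi(X)$, so fullness reduces to generation. The cleanest route is to compare with the standard Lefschetz decomposition
\begin{equation*}
\Db(X) = \langle p_1^* \Db(\bP(V)), p_1^* \Db(\bP(V))(0,1), \dots, p_1^* \Db(\bP(V))(0,n-1) \rangle
\end{equation*}
arising from Orlov's projective bundle theorem applied to $p_1 \colon X \to \bP(V)$, and exhibit a chain of Lefschetz mutations relating the two collections; alternatively, one verifies linear independence of the $\chi(X)$ classes in $K_0(X)_\bQ$ directly, and then invokes that a smooth projective $\Db(X)$ admits no nontrivial admissible subcategory with vanishing $K_0$.

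For stage (iv), the task is to produce a generating set for $\cR$. The list in~\eqref{eq:residual-final} has $2k + 1$ entries, matching the expected rank $\chi(X) - (2k+1)^2 = (2k+1)(2k+2) - (2k+1)^2 = 2k + 1 = n$. The proof splits into three substeps: (a) show each candidate lies in $\cR$ by checking Ext-vanishing against every $\cA(s,s)$; (b) compute pairwise Ext-groups among the candidates and identify the resulting category with $\Db(\rA_n)$; (c) conclude by the rank count that the candidates generate $\cR$. For (a) and (b) one uses repeatedly the tautological sequences
\begin{equation*}
0 \to \cO(-1,0) \to \cU_n \to \cE \to 0, \qquad 0 \to \cU_n \to V \otimes \cO \to \cO(0,1) \to 0
\end{equation*}
together with their exterior-power analogues and the dual versions for $\cE^\vee$, to unwind each Ext-computation into a series of line-bundle cohomology questions handled by the Koszul complex of stage~(i). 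The main obstacle is step~(b): one must pin down exactly which pairs among the $2k + 1$ candidates carry a one-dimensional $\Ext^1$, so that the resulting quiver is of Dynkin type $\rA_n$ rather than of bifurcating $\rD$-type. This hinges on the coincidence $\Lambda^k \cE \cong \Lambda^k \cE^\vee \otimes \cO(1,-1)$ at the middle object, which glues the two symmetric branches $\{\Lambda^i \cE(-1, k-i)\}$ and $\{\Lambda^i \cE^\vee(k-i, -1)\}$ into a single $\rA_n$-chain; careful bookkeeping of cohomological shifts through the tautological sequences, and verifying exact one-dimensionality of each relevant $\Ext^1$, is where the bulk of the technical effort will lie.
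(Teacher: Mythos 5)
Your stages (i) and (ii) match the paper: exceptionality and semiorthogonality of the blocks do reduce, via the Koszul sequence of the $(1,1)$-hypersurface and K\"unneth, to line-bundle cohomology on $\bP^n$ (this is Lemma~\ref{lemma:lb-vanishing} and Corollary~\ref{cor:rectangular-part-an}), and the symmetry of the generating set under $(a,b)\mapsto(b,a)$ gives $\Aut(X)$-invariance. The genuine problem is with both places where you fall back on a rank count in $K_0$. The claim that ``a smooth projective $\Db(X)$ admits no nontrivial admissible subcategory with vanishing $K_0$'' is false in general: phantom and quasi-phantom admissible subcategories are known to exist (e.g.\ on Barlow and Godeaux-type surfaces), and no theorem excludes them for $\Fl(1,n;n+1)$. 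The same objection kills step (c) of stage (iv): knowing that your $2k+1$ candidate objects lie in $\cR$ and that their classes span $K_0(\cR)\otimes\bQ$ does not show they generate $\cR$, since the complement of the subcategory they generate inside $\cR$ could a priori be a phantom. So as written, neither fullness (for even $n$, if one takes your ``alternative'' route) nor the identification of $\cR$ (for odd $n$) is actually established.

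The paper closes this gap constructively, and you should too. Fullness is obtained by starting from the full collection given by Orlov's projective-bundle theorem for $p_1\colon X\to\bP(V)$ and observing that a triangular subcollection of line bundles is \emph{completely right-orthogonal} to the remaining objects (by the vanishing lemma), so its mutation to the far right is simply the anticanonical twist $-\otimes\cO(n,n)$; this transforms the known full collection into the rectangular one (plus, for odd $n$, some leftover line bundles). For odd $n$ the generators of $\cR$ are then produced by mutating those leftover line bundles across the rectangular part using truncated Koszul complexes, which yields objects of the form $\Omega^{k+i}(k+i)\boxtimes_X\cO(i-1)$ and $\cO(i-1)\boxtimes_X\Omega^{k+i}(k+i)$; since these arise as mutations of a full collection, they automatically generate $\cR$, and their $\Ext$-spaces are computable by Bott's formula. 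Only after that does one mutate further to reach the symmetric collection~\eqref{eq:residual-final} in terms of $\Lambda^i\cE$ and $\Lambda^i\cE^\vee$, tracking the $\Ext$-quiver through the mutations to see the $\rA_n$-chain. Your plan to attack the $\Ext$-computations among the $\Lambda^i\cE(-1,k-i)$ directly from the tautological sequences is not wrong in principle, but without the mutation-theoretic derivation you lose both the generation statement and the cleanest access to the $\Ext$-groups.
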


In Remark~\ref{remark:an-weights} we provide a description of $\cA$ and $\cR$ in terms of weights of $\SL(n+1)$.

In Corollary~\ref{cor:rectangular-part-an} we check that the collection of line bundles
defining the rectangular part of~\eqref{eq:rectangular-an} is exceptional.
Furthermore, in Lemma~\ref{lemma:an-even} (for even $n$) and Lemma~\ref{lemma:mutation-1} (for odd~$n$)
we extend it to a full exceptional collection of line bundles.
In the construction we use the advantage of already knowing a full exceptional collection on~$X$
(thanks to the $\bP^{n-1}$-fibration mentioned above),
so it is enough to rearrange it appropriately by a sequence of mutations
(we refer to~\cite[\S2.3]{IK} for a summary of results we need; Lemma~2.13 of \emph{loc.\ cit.}\/ is especially useful).
Part~(1) of the theorem is proved in Lemma~\ref{lemma:an-even} and part~(2) in Lemma~\ref{lemma:residual-an}.

For verifications of exceptionality we will need the following

\begin{lemma}
\label{lemma:lb-vanishing}
If either of the following conditions
\begin{equation*}
1 - n \le a \le -1,
\quad\text{or}\quad
1 - n \le b \le -1,
\quad\text{or}\quad
(a,b) = (0,-n),
\quad\text{or}\quad
(a,b) = (-n,0)
\end{equation*}
is satisfied, then the cohomology $H^\bullet(X,\cO(a,b))$ vanishes.
\end{lemma}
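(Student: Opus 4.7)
The plan is to pull the vanishing statement back to $\bP(V) \times \bP(V^\vee)$ via the Koszul resolution of the hypersurface $X \subset \bP(V) \times \bP(V^\vee)$ and then invoke K\"unneth plus the standard Bott vanishing on projective space. Concretely, since $X$ has bidegree $(1,1)$, the resolution
\begin{equation*}
0 \to \cO(-1,-1) \to \cO_{\bP(V) \times \bP(V^\vee)} \to \cO_X \to 0
\end{equation*}
twisted by $\cO(a,b)$ reduces the lemma to showing that both
\begin{equation*}
H^\bullet\bigl(\bP(V) \times \bP(V^\vee),\,\cO(a,b)\bigr) = 0
\quad\text{and}\quad
H^\bullet\bigl(\bP(V) \times \bP(V^\vee),\,\cO(a-1,b-1)\bigr) = 0;
\end{equation*}
the long exact cohomology sequence then forces $H^\bullet(X,\cO(a,b)) = 0$ as well.

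By the K\"unneth formula the vanishing on the product splits as a product of cohomologies on $\bP^n = \bP(V)$ and $\bP^n = \bP(V^\vee)$, so it is enough that in each bidegree above at least one of the two factors has vanishing total cohomology. Recall that $H^\bullet(\bP^n, \cO(c)) = 0$ precisely when $-n \le c \le -1$.

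I would then check the four cases separately. If $1 - n \le a \le -1$, then $a$ lies in the vanishing range for the first factor, and $a - 1$ lies in the range $-n \le a - 1 \le -2$, so it lies in the vanishing range as well; both K\"unneth products therefore vanish. The case $1 - n \le b \le -1$ is symmetric. In the remaining case $(a,b) = (0,-n)$, the first sheaf $\cO(0,-n)$ vanishes in cohomology because $-n$ is in the vanishing range of the second factor, while the twisted sheaf $\cO(-1,-n-1)$ vanishes because $-1$ is in the vanishing range of the first factor; and $(a,b) = (-n,0)$ is symmetric.

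There is no substantive obstacle here: the argument is a three-line reduction plus a finite case check on projective space. The only thing to keep an eye on is that the exceptional cases $(0,-n)$ and $(-n,0)$ are genuinely not covered by the ranges $1-n \le a \le -1$ or $1-n \le b \le -1$, which is why they must be listed separately, and that after twisting by $\cO(-1,-1)$ they still fall into the vanishing region of K\"unneth.
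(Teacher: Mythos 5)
Your proof is correct and follows exactly the same route as the paper: twist the standard exact sequence $0 \to \cO(-1,-1) \to \cO \to \cO_X \to 0$ by $\cO(a,b)$ and check, via K\"unneth and the vanishing $H^\bullet(\bP^n,\cO(c))=0$ for $-n \le c \le -1$, that both terms on the product have no cohomology. The paper merely states the case check that you carry out explicitly.
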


\begin{proof}
Consider the standard exact sequence
\begin{equation*}
0 \to \cO_{\bP(V) \times \bP(V^\vee)}(a-1,b-1) \to \cO_{\bP(V) \times \bP(V^\vee)}(a,b) \to \cO(a,b) \to 0.
\end{equation*}
If any of the conditions listed in the lemma is satisfied, then
\begin{equation*}
H^\bullet(\bP(V) \times \bP(V^\vee), \cO_{\bP(V) \times \bP(V^\vee)}(a,b))
\quad\text{and}\quad
H^\bullet(\bP(V) \times \bP(V^\vee), \cO_{\bP(V) \times \bP(V^\vee)}(a-1,b-1))
\end{equation*}
vanish, hence $H^\bullet(X,\cO(a,b))$ vanishes as well.
\end{proof}

One of the consequences of this computation is the following

\begin{corollary}
\label{cor:rectangular-part-an}
The collection of objects in the right side of~\eqref{eq:ca-an} is an exceptional collection.
Moreover, the components~$\cA \otimes \cO(t,t)$ in~\eqref{eq:rectangular-an} are semiorthogonal for~$0 \le t \le n-1$.
\end{corollary}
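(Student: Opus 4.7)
The proof reduces to a finite list of cohomology vanishings, all of which should be handled by Lemma~\ref{lemma:lb-vanishing}. Since every object in sight is a line bundle, we have the identification
\[
\Hom^\bullet(\cO(a, b), \cO(a', b')) = H^\bullet(X, \cO(a'-a,\, b'-b)),
\]
and the diagonal identities $\Hom^0(\cO(a,b),\cO(a,b)) = \Bbbk$, $\Hom^{>0}(\cO(a,b),\cO(a,b)) = 0$ follow from the connectedness of $X$ and Kodaira vanishing on the Fano variety $X$.

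For the exceptionality of $\cA$, I would check that whenever $\cO(a, b)$ appears strictly later than $\cO(a', b')$ in the list \eqref{eq:ca-an}, at least one of the differences $a' - a$ or $b' - b$ lies in $[-k, -1]$. This is immediate from the shape of the set of bidegrees defining $\cA$, namely $\{(0,0)\} \cup \{(p,0),\, (0,p) : 1 \le p \le k\}$, after a brief case split according to whether both bidegrees lie on the ``first axis'' $\{b=0\}$, both on the ``second axis'' $\{a=0\}$, or one on each. Since $k \le n - 1$, the interval $[-k, -1]$ is contained in $[1-n, -1]$, so Lemma~\ref{lemma:lb-vanishing} kills the corresponding $\Hom^\bullet$.

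For the semiorthogonality of the blocks $\cA \otimes \cO(t, t)$, I set $s := t - t' \ge 1$ and consider the relevant difference $(a_j - a_i - s,\, b_j - b_i - s)$. If both bidegrees lie on the first (resp.\ second) axis, the second (resp.\ first) coordinate of the difference equals $-s \in [1-n, -1]$, and Lemma~\ref{lemma:lb-vanishing} applies at once. The delicate cross-case is $(a_i, b_i) = (p, 0)$, $(a_j, b_j) = (0, q)$ with $p, q \in [0, k]$ (or its mirror), which gives the difference $(-p-s,\, q-s)$: if $s > q$ the second coordinate falls into $[1-n, -1]$; if $s \le q$ then also $s \le k$, and $-p - s$ lies in $[-2k, -1]$, which is contained in $[1-n, -1]$ except when $n = 2k$ and $-p-s = -n$. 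In that boundary case $p = s = k$, and combining with $s \le q \le k$ forces $q = k$, making the difference exactly $(-n, 0)$ --- one of the sporadic pairs covered by Lemma~\ref{lemma:lb-vanishing}. The mirror cross-case lands symmetrically on $(0, -n)$. The main subtlety here is precisely this boundary case: the sporadic pairs $(-n, 0)$ and $(0, -n)$ in the statement of Lemma~\ref{lemma:lb-vanishing} are tuned exactly to make the rectangular Lefschetz collection semiorthogonal at its extremes.
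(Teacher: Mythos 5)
Your argument is correct and follows essentially the same route as the paper: everything reduces to Lemma~\ref{lemma:lb-vanishing}, and you correctly isolate the one delicate boundary case (which occurs only for $n=2k$ and produces the difference $(-n,0)$ or $(0,-n)$) that the sporadic clauses of that lemma are designed to catch. One small caveat: this section works over an arbitrary field, so Kodaira vanishing is not available for $H^{>0}(X,\cO_X)=0$; instead use the Koszul sequence $0\to\cO_{\bP(V)\times\bP(V^\vee)}(-1,-1)\to\cO_{\bP(V)\times\bP(V^\vee)}\to\cO_X\to 0$ as in the proof of Lemma~\ref{lemma:lb-vanishing}, which gives $H^\bullet(X,\cO_X)=\Bbbk$ directly.
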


\begin{proof}
To prove the corollary we need to show that
\begin{align*}
\Ext^\bullet(\cO(j+t,t),\cO(i,0)) &= H^\bullet(X,\cO(i-j-t,-t))\\
\intertext{and}
\Ext^\bullet(\cO(t,j+t),\cO(i,0)) &= H^\bullet(X,\cO(i-t,-j-t))
\end{align*}
both vanish when $0 \le i,j \le k$ and $1 \le t \le n-1$.

The first vanishing is clear since~$1-n \le -t \le -1$ and Lemma~\ref{lemma:lb-vanishing} applies.
The second vanishing also follows if $-j-t \ge 1-n$.
So, assume $-j-t \le -n$.
Then $t \ge n - j \ge n - k$, hence $i - t \le k - (n - k) = 2k - n \le 0$.
Since also $i - t \ge -t \ge 1 - n$, the vanishing also follows from Lemma~\ref{lemma:lb-vanishing}, unless $i - t = 0$.
But then we must have $i = j = t = k$ and~$n = 2k$, so that the corresponding line bundle is~$\cO(0,-n)$,
and its cohomology vanishes, again by Lemma~\ref{lemma:lb-vanishing}.
\end{proof}

We denote the natural projections of~$X$ by
\begin{equation*}
p_1 \colon X \to \bP(V)
\qquad\text{and}\qquad
p_2 \colon X \to \bP(V^\vee).
\end{equation*}
For any pair of coherent sheaves on $\bP(V)$ and $\bP(V^\vee)$ we set
\begin{equation*}
\cF_1 \boxtimes_X \cF_2 := p_1^*\cF_1 \otimes p_2^*\cF_2 \cong (\cF_1 \boxtimes \cF_2)\vert_X.
\end{equation*}

\subsection{Even $n$}

In this section we prove Theorem~\ref{theorem:an} for even~$n$.

\begin{lemma}
\label{lemma:an-even}
If $n = 2k$ then $\Db(X) = \langle \cA,\cA \otimes \cO(1,1), \dots, \cA \otimes \cO(2k-1,2k-1) \rangle$,
where~$\cA$ is defined by~\eqref{eq:ca-an}.
\end{lemma}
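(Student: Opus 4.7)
The plan is to deduce fullness by comparison with a known full exceptional collection on $X$. The projection $p_1 \colon X \to \bP(V)$ realizes $X$ as $\bP_{\bP(V)}((V/\cU_1)^\vee)$ with relative hyperplane class $\cO(0, 1)$. Applying Orlov's projective bundle formula together with the Beilinson collection on $\bP(V) = \bP^n$ yields the full exceptional collection
\begin{equation*}
\Db(X) = \langle \cO(a, b) : 0 \le a \le n,\ 0 \le b \le n - 1\rangle
\end{equation*}
of length $n(n + 1) = 2k(2k + 1)$, matching the length of the claimed rectangular Lefschetz collection.

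Semiorthogonality of the proposed collection is already established in Corollary~\ref{cor:rectangular-part-an}, so fullness reduces to showing that each Orlov line bundle lies in the subcategory $\cT \subset \Db(X)$ generated by the rectangular Lefschetz collection. The Orlov line bundles with $|a - b| \le k$ are directly present in $\cT$; the task is thus to handle the ones with $|a - b| > k$, of which there are precisely as many as the line bundles of the rectangular collection lying outside the Orlov rectangle (i.e.\ those with $\max(a, b) \ge n$, up to $\max(a, b) = n - 1 + k = 3k - 1$). I would carry this out via a sequence of right mutations of the Orlov collection, following the recipe of Lemma~2.13 of \cite{IK}: each such mutation moves an out-of-range line bundle past a suitable subcollection and replaces it by a line bundle lying inside the rectangular region. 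The extensions arising in these mutations are controlled by the Koszul resolution of $\cO_X$ as a $(1, 1)$-divisor in $\bP(V) \times \bP(V^\vee)$ together with Beilinson/Euler resolutions on each factor, which relate $\cO_X(a, b)$ for different $(a, b)$.

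The main obstacle is the bookkeeping of these mutations: ensuring that at each step the output is a single line bundle of the prescribed bidegree and not a more complicated complex. The even-$n$ case is however the symmetric situation with no residual category, so the out-of-range line bundles in the Orlov and rectangular collections pair up cleanly across the main diagonal $a = b$, and the mutations terminate with the rectangular Lefschetz collection exactly. Equality then follows from the length count together with the already established semiorthogonality.
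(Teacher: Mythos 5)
Your overall strategy is the paper's: use the projective bundle $p_1\colon X\to\bP(V)$ to produce a full exceptional collection of line bundles and then mutate it into the rectangular Lefschetz collection. But the step you defer as ``bookkeeping'' is in fact the entire content of the proof, and as set up it has a genuine gap. Starting from the standard Orlov collection $\{\cO(a,b)\colon 0\le a\le n,\ 0\le b\le n-1\}$, the line bundles with $|a-b|>k$ are \emph{not} completely orthogonal to the subcollections you would need to mutate them through, so there is no reason for the mutations to output single line bundles rather than complexes (compare the objects $\Omega^{j}(j)\boxtimes_X\cO(\cdot)$ that genuinely appear as mutation outputs in the odd-$n$ case). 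Your assertion that the out-of-range bundles ``pair up cleanly across the main diagonal'' and that ``the mutations terminate with the rectangular Lefschetz collection exactly'' is a counting observation, not a proof; and the closing appeal to a length count is not a valid fullness argument (an exceptional collection of the expected length need not be full) -- what does the work is showing that every generator of a known full collection lies in $\cT$, which is precisely the unproved part.

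The paper sidesteps all of this by a different choice of Beilinson collection in each block: in the $i$-th copy of $\Db(\bP(V))$ it takes $\langle\cO(i-k),\dots,\cO(i+k)\rangle$, a staircase shifted with $i$. The resulting full collection \eqref{eq:ec-an-even-1} already contains the rectangular collection except for a triangle of line bundles $\cO(a,b)$ with $a<0$, and Lemma~\ref{lemma:lb-vanishing} shows this triangle is completely orthogonal to (i.e.\ right-orthogonal to) everything else, so its mutation to the far right is simply the twist by $\omega_X^{-1}=\cO(2k,2k)$, which manifestly sends line bundles to line bundles of the required bidegrees. If you want to rescue your version, the missing ingredient is to first pass from the standard Beilinson collections to these shifted ones by fiberwise helix rotations on $\bP^{n}$ (mutating $\cO(j)$ across a full Beilinson block replaces it by $\cO(j\pm(n+1))$), and only then perform the single anticanonical-twist mutation; without identifying some such mechanism that keeps every mutation output a line bundle, the argument does not close.
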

\begin{proof}
The $\bP^{2k-1}$-fibration $p_1$ gives rise to the semiorthogonal decomposition
\begin{equation*}
\Db(X) = \big\langle p_1^*(\Db(\bP(V))), p_1^*(\Db(\bP(V))) \otimes \cO(0,1), \dots, p_1^*(\Db(\bP(V))) \otimes \cO(0,2k-1) \big\rangle.
\end{equation*}
Choosing the exceptional collection
\begin{equation*}
\Db(\bP(V)) = \langle \cO(i-k), \cO(i-k+1), \dots, \cO(i+k) \rangle,
\end{equation*}
in the $i$-th component, we obtain a full exceptional collection in~$\Db(X)$ that takes the form
\begin{equation}
\addtolength{\jot}{-.5ex}
\label{eq:ec-an-even-1}
\begin{aligned}
\Db(X) = \Big\langle
& \cO(-k,0), \cO(1-k,0), \dots, \cO(k,0), \\
& \quad\cO(1-k,1), \cO(2-k,1), \dots, \cO(k+1,1), \\
& \quad\quad\hspace{8em} \dots \\
& \quad\quad\quad\cO(k-1,2k-1), \cO(k,2k-1), \dots, \cO(3k-1,2k-1)
\Big\rangle.
\end{aligned}
\end{equation}
The collection is shown in Picture~\ref{table:mutation-an-even}, the objects are represented by black dots.
 {\setlength\intextsep{1ex}
\begin{table}[h]
\begin{tikzpicture}[xscale = .5, yscale = .5]
\foreach \i in {0,1,2,3,4,5}
  \foreach \j in {0,1,2,3,4,5,6}
    \filldraw[black] (\j+\i,\i) circle (.2em);
\foreach \i in {0,1,2,3,4,5}
  \draw (\i+3,\i+3) -- (\i+3,\i) -- (\i+6,\i);
\foreach \j in {6,7,8} { \fill[white] (\j,6) circle (.2em); \draw[black] (\j,6) circle (.2em); }
\foreach \j in {7,8} { \fill[white] (\j,7) circle (.2em); \draw[black] (\j,7) circle (.2em); }
\fill[white] (8,8) circle (.2em); \draw[black] (8,8) circle (.2em);
\draw [rounded corners] (0.2,0.5) -- +(2,2) -- +(2,-0.7) -- +(-0.7,-0.7) -- +(0,0);
\draw [dashed, rounded corners] (6.2,6.5) -- +(2,2) -- +(2,-0.7) -- +(-0.7,-0.7) -- +(0,0);
\end{tikzpicture}
\bigskip
\renewcommand{\tablename}{Picture}
\caption{Mutation of~\eqref{eq:ec-an-even-1} to a rectangular Lefschetz collection for $k = 3$.}
\label{table:mutation-an-even}
\end{table}}

Now we perform a mutation:
we consider the subcollection formed by the first $k$ terms of the first line,
the first~\mbox{$k-1$} terms of the second line, and so on, up to the first term of the~$k$-th line of~\eqref{eq:ec-an-even-1}:
\begin{equation}
\addtolength{\jot}{-.5ex}
\label{eq:subcollection}
\begin{aligned}
\{ \cO(-k,0), \cO(1-k,0), \dots, & \cO(-1,0); \\
\cO(1-k,1), \dots, & \cO(-1,1); \\
\ddots \hphantom{,}& \qquad\vdots \\
& \cO(-1,k-1) \};
\end{aligned}
\end{equation}
(this subcollection is depicted by the triangle in the left part of Picture~\ref{table:mutation-an-even})
and mutate it to the far right of the exceptional collection.
Using Lemma~\ref{lemma:lb-vanishing} it is easy to see that the objects in~\eqref{eq:subcollection}
are right-orthogonal to the objects in the rest of~\eqref{eq:ec-an-even-1}.
Hence, their mutation to the far right is realized by the anticanonical twist (e.g. see~\cite[Lemma~2.13]{IK}).
As~$\omega_X^{-1} \cong \cO(2k,2k)$, this replaces the black dots in the triangle
by the white dots in the dashed triangle in Picture~\ref{table:mutation-an-even}.

It remains to note that the resulting full exceptional collection is precisely
the collection~$\langle \cA, \cA \otimes \cO(1,1), \dots, \cA \otimes \cO(2k-1,2k-1) \rangle$.
Indeed, the blocks formed by the twists of the subcategory~$\cA$ correspond to the L-shaped figures on the picture,
and their semiorthogonality was proved in Corollary~\ref{cor:rectangular-part-an}.
\end{proof}

\subsection{Odd $n$: rectangular part}

From now on we set $n = 2k + 1$. First, we use the trick of Lemma~\ref{lemma:an-even} to construct a full exceptional collection
that includes the rectangular part of~\eqref{eq:rectangular-an} as a subcollection.
In this case this is slightly more complicated, so we split the construction in two steps.

\begin{lemma}
\label{lemma:an-standard-ec}
The category $\Db(X)$ has the following full exceptional collection:
\begin{equation}
\addtolength{\jot}{-.5ex}
\label{eq:ec-an-1}
\begin{aligned}
\Db(X) = \Big\langle
& \cO(-k,0), \cO(1-k,0), \dots, \cO(k+1,0), \\
& \quad\cO(1-k,1), \cO(2-k,1), \dots, \cO(k+2,1), \\
& \quad\quad\hspace{8em} \dots \\
& \quad\quad\quad\cO(-1,k-1), \cO(0,k-1), \dots, \cO(2k,k-1), \\
& \quad\quad\quad\cO(-1,k), \cO(0,k), \dots, \cO(2k,k), \\
& \quad\quad\quad\quad\cO(0,k+1), \cO(1,k+1), \dots, \cO(2k+1,k+1), \\
& \quad\quad\quad\quad\quad\hspace{8em} \dots \\
& \quad\quad\quad\quad\quad\quad\cO(k-1,2k), \cO(k,2k), \dots, \cO(3k,2k)
\Big\rangle.
\end{aligned}
\end{equation}
\end{lemma}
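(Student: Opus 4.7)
The approach mirrors the proof of Lemma~\ref{lemma:an-even}: I will apply Orlov's projective bundle theorem to $p_1 \colon X \to \bP(V)$, and then instantiate each block with a twist of the standard Beilinson exceptional collection having a carefully chosen window. The projection $p_1$ presents $X$ as a $\bP^{n-1}$-bundle whose relative hyperplane class on $X$ is $\cO(0,1)$, so Orlov's theorem produces the semiorthogonal decomposition
\begin{equation*}
\Db(X) = \bigl\langle p_1^*\Db(\bP(V)),\ p_1^*\Db(\bP(V)) \otimes \cO(0,1),\ \dots,\ p_1^*\Db(\bP(V)) \otimes \cO(0,n-1) \bigr\rangle
\end{equation*}
consisting of $n = 2k+1$ copies of $\Db(\bP(V)) \cong \Db(\bP^n)$.

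Inside each block, one can use any twist of Beilinson's collection on $\bP(V)$: for every integer $a_i$ the collection $\cO(a_i), \cO(a_i+1), \dots, \cO(a_i+n)$ is a full exceptional collection of line bundles on $\bP(V)$. Pulling back along $p_1$ and twisting by $\cO(0,i)$ produces the line bundles
\begin{equation*}
\cO(a_i,i),\ \cO(a_i+1,i),\ \dots,\ \cO(a_i+n,i)
\end{equation*}
in the $i$-th block. The explicit display~\eqref{eq:ec-an-1} is then recovered by the choice
\begin{equation*}
a_i =
\begin{cases}
i - k & \text{for } 0 \le i \le k - 1, \\
i - k - 1 & \text{for } k \le i \le n - 1 = 2k,
\end{cases}
\end{equation*}
as a direct row-by-row comparison of the ranges $[a_i,\ a_i+n]$ with the first-coordinate bounds in~\eqref{eq:ec-an-1} confirms.

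No genuine obstacle arises: fullness and exceptionality of the resulting collection follow at once from the projective bundle theorem combined with Beilinson's theorem for $\bP^n$. The real content of the lemma is thus the specific shape of the chosen windows, with the shift pattern having a deliberate ``jump'' between rows $i = k-1$ and $i = k$ (both of which have first-coordinate range $[-1, 2k]$). This asymmetry has been arranged in order to enable the subsequent mutation argument that will extract the $\Aut(X)$-invariant rectangular part of~\eqref{eq:rectangular-an}, in the same spirit as the even-$n$ case of Lemma~\ref{lemma:an-even}.
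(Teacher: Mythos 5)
Your argument is correct and is essentially identical to the paper's own proof: both use the $\bP^{2k}$-fibration $p_1\colon X\to\bP(V)$ to obtain the semiorthogonal decomposition into $n=2k+1$ twisted copies of $\Db(\bP(V))$ and then choose the shifted Beilinson windows $[i-k,\,i+k+1]$ for $0\le i\le k-1$ and $[i-k-1,\,i+k]$ for $k\le i\le 2k$, which is exactly your choice of $a_i$. The verification of the row-by-row match is likewise the same.
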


A graphical representation for the collection in case $k = 3$ can be found in Picture~\ref{table:mutation-an}.
\begin{table}[h]
\begin{tikzpicture}[xscale = .45, yscale = .45]
\foreach \i in {0,1,2,3,4,5,6}
  \foreach \j in {0,1,2,3,4,5,6}
    \filldraw[black] (\j+\i,\i) circle (.2em);
\foreach \i in {3,4,5,6}
  \filldraw[red] (\i-1,\i) circle (.2em);
\draw[red] (2,3) circle (.5em);
\foreach \i in {0,1,2}
  \filldraw[red] (\i+7,\i) circle (.2em);
\foreach \i in {0,1,2,3,4,5,6}
  \draw (\i+3,\i+3) -- (\i+3,\i) -- (\i+6,\i);
\foreach \j in {7,8,9} { \fill[white] (\j,7) circle (.2em); \draw[black] (\j,7) circle (.2em); }
\foreach \j in {8,9} { \fill[white] (\j,8) circle (.2em); \draw[black] (\j,8) circle (.2em); }
\fill[white] (9,9) circle (.2em); \draw[black] (9,9) circle (.2em);
\draw [rounded corners] (0.2,0.5) -- +(2,2) -- +(2,-0.7) -- +(-0.7,-0.7) -- +(0,0);
\draw [dashed, rounded corners] (7.2,7.5) -- +(2,2) -- +(2,-0.7) -- +(-0.7,-0.7) -- +(0,0);
\end{tikzpicture}
\bigskip
\renewcommand{\tablename}{Picture}
\caption{Mutation of~\eqref{eq:ec-an-1} to a rectangular Lefschetz collection for $k = 3$.}
\label{table:mutation-an}
\end{table}
The objects depicted by black and red dots form the collection~\eqref{eq:ec-an-1};
the rows of~\eqref{eq:ec-an-1} correspond to rows in the picture (and the shifts of rows match up).
The mutation of Lemma~\ref{lemma:mutation-1} will take the objects corresponding to the black dots in the left triangle
to the objects corresponding to white dots in the dashed triangle at the top.
The L-shaped figures correspond to blocks of the rectangular Lefschetz collection (the first of them is~\eqref{eq:ca-an}).

\begin{proof}
The $\bP^{2k}$-fibration $p_1$ gives rise to the semiorthogonal decomposition
\begin{equation*}
\Db(X) = \big\langle p_1^*(\Db(\bP(V))), p_1^*(\Db(\bP(V))) \otimes \cO(0,1), \dots, p_1^*(\Db(\bP(V))) \otimes \cO(0,2k) \big\rangle.
\end{equation*}
This time for the first $k$ components (i.e., for $0 \le i \le k-1$) we choose the collection
\begin{equation*}
\Db(\bP(V)) = \langle \cO(i-k), \cO(i-k+1), \dots, \cO(i+k+1) \rangle,
\end{equation*}
and for the last $k + 1$ components (i.e., for $k \le i \le 2k$) we choose the collection
\begin{equation*}
\Db(\bP(V)) = \langle \cO(i-k-1), \cO(i-k), \dots, \cO(i+k) \rangle.
\end{equation*}
As the result, we obtain~\eqref{eq:ec-an-1}.
\end{proof}

It follows from Lemma~\ref{lemma:lb-vanishing} that the only nontrivial $\Ext$-spaces among the objects in Picture~\ref{table:mutation-an}
are those going in the upper-right direction
and additionally, there is non-trivial $\Ext$-space
\begin{equation}
\label{eq:ext-special}
\Ext^{2k}(\cO(2k,k-1),\cO(-1,k)) = \Bbbk
\end{equation}
from the rightmost red dot to the leftmost one.

Now we consider the same subcollection~\eqref{eq:subcollection} as in the proof of Lemma~\ref{lemma:an-even}
(this subcollection is depicted by the triangle in the left part of Picture~\ref{table:mutation-an})
and mutate it to the far right of the exceptional collection.
It follows from the description of Lemma~\ref{lemma:lb-vanishing} that the objects in~\eqref{eq:subcollection}
are right-orthogonal to the objects in the rest of~\eqref{eq:ec-an-1}.
As in Lemma~\ref{lemma:an-even}, their mutation to the far right is realized by the anticanonical twist.
As~$\omega_X^{-1} \cong \cO(2k+1,2k+1)$, we deduce the following

\begin{lemma}
\label{lemma:mutation-1}
The category $\Db(X)$ has the following full exceptional collection:
\begin{equation}
\addtolength{\jot}{-.5ex}
\label{eq:ec-an-2}
\begin{aligned}
\Db(X) = \Big\langle
& \quad\cO(0,0), \cO(1,0), \dots, \cO(k+1,0), \\
& \quad\cO(0,1), \cO(1,1), \dots, \cO(k+2,1), \\
& \quad\hspace{8em} \dots \\
& \quad\cO(0,k-1), \cO(1,k-1), \dots, \cO(2k,k-1), \\
& \cO(-1,k), \cO(0,k), \dots, \cO(2k,k), \\
& \quad\cO(0,k+1), \cO(1,k+1), \dots, \cO(2k+1,k+1), \\
& \quad\quad\hspace{8em} \dots \\
& \quad\quad\quad\cO(k-1,2k), \cO(k,2k), \dots, \cO(3k,2k), \\
& \quad\quad\quad\quad\quad\cO(k+1,2k+1), \cO(k+2,2k+1), \dots, \cO(2k,2k+1), \\
& \quad\quad\quad\quad\quad\hphantom{\cO(k+1,2k+1),}\, \cO(k+2,2k+2), \dots, \cO(2k,2k+2). \\
& \quad\quad\quad\quad\quad\hphantom{\cO(k+1,2k+1), \cO(k+2,2k+1), }\dots \\
& \quad\quad\quad\quad\quad\hphantom{\cO(k+1,2k+1), \cO(k+2,2k+1), \dots, }\, \cO(2k,3k)
\Big\rangle.
\end{aligned}
\end{equation}
\end{lemma}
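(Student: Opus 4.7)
The plan is to deduce~\eqref{eq:ec-an-2} from the full exceptional collection~\eqref{eq:ec-an-1} of Lemma~\ref{lemma:an-standard-ec} by a single block mutation: carry the triangular subcollection~\eqref{eq:subcollection} all the way to the right end of the collection. Once this block mutation is legitimized, the isomorphism $\omega_X \cong \cO(-2k-1,-2k-1)$ together with~\cite[Lemma~2.13]{IK} shows that each line bundle $\cO(a,b)$ in the triangle is replaced by its anticanonical twist $\cO(a+2k+1, b+2k+1)$, up to a cohomological shift that does not affect the collection. The target~\eqref{eq:ec-an-2} is then assembled by concatenating the untouched complement of the triangle with this anticanonically-twisted triangle, and one reads off the matching of rows directly from the bidegrees.

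To legitimize the mutation, let $\cT$ and $\cU$ be the subcategories generated by~\eqref{eq:subcollection} and by its complement inside~\eqref{eq:ec-an-1}, respectively. It suffices to establish the semiorthogonal decomposition $\Db(X) = \langle \cT, \cU \rangle$, that is, $\Hom^\bullet(\cU,\cT) = 0$. For pairs in which the $\cU$-object sits after the $\cT$-object in the ordering of~\eqref{eq:ec-an-1}, the vanishing is automatic from the semiorthogonality of~\eqref{eq:ec-an-1}. The remaining pairs are those with $F = \cO(c,j') \in \cU$ in a row $j'$ strictly lower than the row $j$ of the $\cT$-object $E = \cO(a,j)$; in each row $j'$ of the complement the coordinate satisfies $0 \le c \le j'+k+1$, while the triangle in row $j$ has $j-k \le a \le -1$. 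For such pairs
\begin{equation*}
\Hom^\bullet(F,E) = \rH^\bullet(X, \cO(a-c,\, j-j')),
\end{equation*}
and the above bounds yield $1-n \le (j-j')-n \le a-c \le -1$, so Lemma~\ref{lemma:lb-vanishing} forces the vanishing.

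Once $\Db(X) = \langle \cT, \cU \rangle$ is in place, \cite[Lemma~2.13]{IK} produces the anticanonically-twisted rotation $\Db(X) = \langle \cU, \cT \otimes \omega_X^{-1} \rangle$. Reading off generators, $\cU$ contributes the complement of the triangle in its inherited order---this gives all rows $0$ through $2k$ of~\eqref{eq:ec-an-2}---and the twisted $\cT$ contributes the line bundles $\cO(a+2k+1,\, b+2k+1)$ for $(a,b)$ ranging over~\eqref{eq:subcollection}, which populate the rows $2k+1$ through $3k$ of~\eqref{eq:ec-an-2}.

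The only genuine obstacle is the bookkeeping in the second paragraph: one must make sure that every pair $(F,E)$ with $F \in \cU$ strictly preceding $E \in \cT$ in~\eqref{eq:ec-an-1} falls into the vanishing range of Lemma~\ref{lemma:lb-vanishing}. The special non-vanishing~\eqref{eq:ext-special} noted before the lemma is harmless, because both of its endpoints lie in $\cU$; one simply has to avoid inadvertently counting $\cO(-1,k)$ inside the triangle~\eqref{eq:subcollection}. Beyond this, the argument is formal.
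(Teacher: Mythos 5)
Your proposal is correct and follows the paper's own route: both arguments show that the triangular subcollection~\eqref{eq:subcollection} is right-orthogonal to the rest of~\eqref{eq:ec-an-1} via Lemma~\ref{lemma:lb-vanishing} and then realize the mutation to the far right as the twist by $\omega_X^{-1} \cong \cO(2k+1,2k+1)$ using~\cite[Lemma~2.13]{IK}. The only difference is that you spell out the bidegree bookkeeping (the bounds $1-n \le a-c \le -1$) that the paper leaves implicit, and correctly note that the exceptional $\Ext$-space~\eqref{eq:ext-special} does not interfere since both of its endpoints lie outside the triangle.
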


The resulting collection in the case $k = 3$ is shown in Picture~\ref{table:mutation-an}:
the objects corresponding to the black dots in the lower-left triangle are replaced by the white dots in the upper-right triangle;
the rows of~\eqref{eq:ec-an-2} correspond to the rows in the picture (and the shifts of rows match).

\subsection{Odd $n$: residual category}

We note that the resulting exceptional collection~\eqref{eq:ec-an-2} is already quite close to what we need.
For instance, it already contains the rectangular part of~\eqref{eq:rectangular-an}.
Indeed, the blocks $\cA \otimes \cO(i,i)$ of~\eqref{eq:rectangular-an}
are generated by the objects corresponding to the dots in Picture~\ref{table:mutation-an} joined into L-shaped figures.

However, the objects which are not in the rectangular part
\begin{equation*}
\cO(k+1,0),\cO(k+2,1),\dots,\cO(2k,k-1),
\cO(-1,k),\cO(0,k+1),\cO(1,k+2),\dots,\cO(k-1,2k)
\end{equation*}
(they are marked with red dots on the picture),
are not yet right-orthogonal to the rectangular part (hence are not yet contained in the residual category).
The next step is to mutate them accordingly.

\begin{proposition}
If $\cR \subset \Db(X)$ is the residual category of the rectangular collection defined by~\eqref{eq:rectangular-an},
then $\cR$ is generated by the following exceptional collection
\begin{multline}
\label{eq:cr-preliminary}
\cR = \langle
\Omega^{k+1}(k+1) \boxtimes_X \cO,
\Omega^{k+2}(k+2) \boxtimes_X \cO(1), \dots,
\Omega^{2k}(2k) \boxtimes_X \cO(k-1)
,\\
\cO(-1,k),\\
\cO \boxtimes_X \Omega^{k+1}(k+1),
\cO(1) \boxtimes_X \Omega^{k+2}(k+2), \dots,
\cO(k-1) \boxtimes_X \Omega^{2k}(2k)
\rangle.
\end{multline}
The only non-trivial $\Ext$-spaces between the objects of this exceptional collection are
\begin{equation}
\label{eq:exts-cr-preliminary}
\begin{aligned}
& \Hom(\Omega^{k+i-1}(k+i-1) \boxtimes_X \cO(i-2), && \Omega^{k+i}(k+i) \boxtimes_X \cO(i-1)) 	&& = \Bbbk, && 2 \le i \le k\\
& \Hom(\Omega^{2k}(2k) \boxtimes_X \cO(k-1),	 && \cO(-1,k)) 					&& = \Bbbk,\\
& \Ext^k(\cO(-1,k),				 && \cO \boxtimes_X \Omega^{k+1}(k+1)) 		&& = \Bbbk,\\
& \Hom(\cO(i-2) \boxtimes_X \Omega^{k+i-1}(k+i-1), && \cO(i-1) \boxtimes_X \Omega^{k+i}(k+i)) 	&& = \Bbbk, && 2 \le i \le k.
\end{aligned}
\end{equation}
In particular, $\cR$ is equivalent to the derived category of the Dynkin quiver $\rA_{2k+1}$.
\end{proposition}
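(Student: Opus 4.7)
The plan is as follows. Lemma~\ref{lemma:mutation-1} gives $\Db(X)$ length $n(n+1)=(2k+1)(2k+2)$, while the rectangular part of~\eqref{eq:rectangular-an} has length $(2k+1)^2$; hence $\cR$ has K-theoretic rank exactly $2k+1$. It therefore suffices to exhibit $2k+1$ exceptional objects in $\cR$ whose pairwise Ext groups have the claimed structure. The candidate objects are precisely those in~\eqref{eq:cr-preliminary}, motivated by the fact that in the full collection~\eqref{eq:ec-an-2} the rectangular blocks appear together with exactly $2k+1$ extra objects
\begin{equation*}
\cO(k+i,i-1)\ (1\le i\le k),\ \cO(-1,k),\ \cO(i-1,k+i)\ (1\le i\le k),
\end{equation*}
each of which, via an appropriate Koszul resolution, is related to the corresponding object in~\eqref{eq:cr-preliminary}.

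The first step is to verify semiorthogonality of the candidates in~\eqref{eq:cr-preliminary} with every line bundle of the rectangular part. For $E=\Omega^{k+i}(k+i)\boxtimes_X\cO(i-1)$ (and symmetrically for its partner on the other factor), I would resolve $E$ by the pullback of the Koszul complex on $\bP(V)$,
\begin{equation*}
0\to\Omega^{k+i}(k+i)\to\Lambda^{k+i}V\otimes\cO\to\dots\to V\otimes\cO(k+i-1)\to\cO(k+i)\to 0,
\end{equation*}
tensored with $p_2^*\cO(i-1)$, and compute $\RHom(\cO(a,b),E)$ against each term by reducing to cohomology of line bundles on $X$, where Lemma~\ref{lemma:lb-vanishing} provides the required vanishings. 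The same type of argument applies for the middle object $\cO(-1,k)$ directly via Lemma~\ref{lemma:lb-vanishing}. Once semiorthogonality with the rectangular part is established, the $2k+1$ candidate objects lie in $\cR$, and being mutually exceptional (verified by the same technique) they generate $\cR$ by the length count.

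The crux is the computation of the Ext groups asserted in~\eqref{eq:exts-cr-preliminary}. Consecutive upper-type (or lower-type) pairs reduce, via the Koszul resolution and the K\"unneth formula on $\bP(V)\times\bP(V^\vee)$ together with the defining exact sequence
\begin{equation*}
0\to\cO_{\bP(V)\times\bP(V^\vee)}(-1,-1)\to\cO_{\bP(V)\times\bP(V^\vee)}\to\cO_X\to 0,
\end{equation*}
to a one-dimensional $\Hom$ coming from the Euler sequence on $\bP(V)$ (respectively $\bP(V^\vee)$). The two cross-Ext spaces involving the middle object $\cO(-1,k)$ are the most delicate: the first, $\Hom(\Omega^{2k}(2k)\boxtimes_X\cO(k-1),\cO(-1,k))=\Bbbk$, follows from the Koszul resolution combined with a single non-vanishing cohomology on $X$; the shifted one, $\Ext^k(\cO(-1,k),\cO\boxtimes_X\Omega^{k+1}(k+1))=\Bbbk$, requires computing $\RHom$ against the full Koszul complex on $\bP(V^\vee)$, producing a homological shift by $k$ from the twist by $\cO(-1)$ on $\bP(V^\vee)$. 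I expect the main obstacle to be precisely this shift tracking, and the accompanying verification that all remaining Ext pairs genuinely vanish. Once~\eqref{eq:exts-cr-preliminary} is confirmed, the resulting collection has the Ext structure of a Dynkin $\rA_{2k+1}$ quiver (shifts being irrelevant for equivalences with derived categories of hereditary algebras), yielding $\cR\cong\Db(\rA_{2k+1})$.
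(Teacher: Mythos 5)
Your semiorthogonality checks and your plan for computing the $\Ext$-spaces follow essentially the same route as the paper: Koszul resolutions pulled back from the two projective spaces, the standard sequence relating sheaves on $\bP(V)\times\bP(V^\vee)$ to their restrictions to $X$, and the vanishings of Lemma~\ref{lemma:lb-vanishing} resp.\ Bott's formula. The genuine problem is your argument for \emph{generation}: you conclude that the $2k+1$ candidate objects generate $\cR$ ``by the length count''. An exceptional collection in a triangulated category whose length equals the rank of the Grothendieck group need not be full --- this is exactly the phenomenon of (quasi-)phantom subcategories, and fullness of maximal-length exceptional collections is open even in quite restrictive settings. So exhibiting $2k+1$ exceptional objects inside $\cR$ with the correct $\Ext$-structure does not by itself finish the proof.

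The fix is already implicit in your setup. The truncated Koszul complexes you write down (the sequences \eqref{eq:mutation-1} and \eqref{eq:mutation-2} of the paper) do more than ``motivate'' the candidates: once you have checked that $\Omega^{k+i}(k+i)\boxtimes_X\cO(i-1)$ and $\cO(i-1)\boxtimes_X\Omega^{k+i}(k+i)$ lie in $\cR$, these exact sequences exhibit them as the mutations of the line bundles $\cO(k+i,i-1)$ and $\cO(i-1,k+i)$ through the rectangular part of \eqref{eq:rectangular-an}, because all intermediate terms of the complexes belong to that rectangular part. Since those line bundles, together with $\cO(-1,k)$ and the rectangular part, generate $\Db(X)$ by Lemma~\ref{lemma:mutation-1} (fullness of \eqref{eq:ec-an-2}), their mutations into $\cR$ together with $\cO(-1,k)$ generate $\cR$. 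This is the argument the paper uses, and it must replace the length count. (A minor further point: in your Koszul complex on $\bP(V)$ the terms should involve exterior powers of $V^\vee$, not of $V$.)
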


\begin{proof}
It follows from Lemma~\ref{lemma:lb-vanishing} that the object $\cO(-1,k)$
(corresponding to the circled red dot in Picture~\ref{table:mutation-an}) is already in the residual category,
so it is enough to mutate the other $2k$ objects.

Consider the exact sequences
\begin{multline}
\label{eq:mutation-1}
0 \to \Omega^{k+i}(k+i) \boxtimes_X \cO(i-1) \to
\Lambda^{k+i}V^\vee \otimes \cO(0,i-1) \to \\
\dots \to
V^\vee \otimes \cO(k+i-1,i-1) \to
\cO(k+i,i-1) \to 0
\end{multline}
obtained by an appropriate twist of the pullback to $X$ of the truncated Koszul complex on~$\bP(V)$,
and analogous exact sequences
\begin{multline}
\label{eq:mutation-2}
0 \to \cO(i-1) \boxtimes_X \Omega^{k+i}(k+i) \to
\Lambda^{k+i}V \otimes \cO(i-1,0) \to \\
\dots \to
V \otimes \cO(i-1,k+i-1) \to
\cO(i-1,k+i) \to 0.
\end{multline}
If we show that the objects $\Omega^{k+i}(k+i) \boxtimes_X \cO(i-1)$ and $\cO(i-1) \boxtimes_X \Omega^{k+i}(k+i)$ belong to $\cR$,
it will follow that these exact sequences express mutations of $\cO(k+i,i-1)$ and $\cO(i-1,k+i)$
through the rectangular part of~\eqref{eq:rectangular-an},
and that together with the object $\cO(-1,k)$ these objects generate the residual category~$\cR$.

So, we need to check that $\Ext^\bullet(\cO(a,b),-) = H^\bullet(X, - \otimes \cO(-a,-b))$ vanishes on these objects when $\cO(a,b)$
run through the set of objects generating the rectangular part of~\eqref{eq:rectangular-an}, i.e., for
\begin{equation}
\label{eq:lefschetz-conditions}
0 \le a \le b \le a + k \le 3k \qquad \text{and} \qquad
0 \le b \le a \le b + k \le 3k.
\end{equation}
We have exact sequences on $\bP(V) \times \bP(V^\vee)$
\begin{equation*}
0
\to \Omega^{k+i}(k+i-1) \boxtimes \cO(i-2)
\to \Omega^{k+i}(k+i) \boxtimes \cO(i-1)
\to \Omega^{k+i}(k+i) \boxtimes_X \cO(i-1)
\to 0.
\end{equation*}
To compute $H^\bullet(X,\Omega^{k+i}(k+i-a) \boxtimes_X \cO(i-1-b))$ we thus need to compute two tensor products
\begin{equation}
\begin{aligned}
\label{eq:cohomology}
H^\bullet(\bP(V), \Omega^{k+i}(k+i-a)) \otimes H^\bullet(\bP(V^\vee), \cO(i-1-b)), \\
H^\bullet(\bP(V), \Omega^{k+i}(k+i-1-a)) \otimes H^\bullet(\bP(V^\vee), \cO(i-2-b)).
\end{aligned}
\end{equation}
By Bott's formula~\cite[Proposition~14.4]{Bott} the first factors in these products are zero, except for
\begin{equation}
\label{eq:a-conditions}
\begin{aligned}
& a \le -1,
&& \text{or}
&& a = k+i,
&& \text{or}
&& a \ge 2k + 2,
\\
& a \le -2,
&& \text{or}
&& a = k+i-1,
&& \text{or}
&& a \ge 2k + 1
\end{aligned}
\end{equation}
(the conditions in the first (resp.\ second) line of~\eqref{eq:a-conditions} corresponds to
non-vanishing of the first factors in the first (resp.\ second) line in~\eqref{eq:cohomology}).
Similarly, the second factors vanish except for
\begin{equation}
\label{eq:b-conditions}
\begin{aligned}
& b \le i - 1,
&& \text{or}
&& b \ge 2k + i + 1,
\\
& b \le i - 2,
&& \text{or}
&& b \ge 2k + i
\end{aligned}
\end{equation}
(with the same convention about the role of the lines). Clearly, \eqref{eq:lefschetz-conditions} is not compatible with the conditions $a \le -1$ or $a \le -2$ in~\eqref{eq:a-conditions}.
Furthermore, \eqref{eq:lefschetz-conditions} implies that $|a - b| \le k$, which contradicts all conditions in~\eqref{eq:a-conditions} and~\eqref{eq:b-conditions}, except for those in the last columns.
However, in the latter case both $a$ and $b$ are strictly bigger than $2k$ (recall that $1 \le i \le k$),
which also contradicts~\eqref{eq:lefschetz-conditions}.

Thus, we conclude that for all $(a,b)$ satisfying~\eqref{eq:lefschetz-conditions}
either of the factors in the tensor products~\eqref{eq:cohomology} vanishes,
hence the objects $\Omega^{k+i}(k+i) \boxtimes_X \cO(i-1)$ belong to the residual category~$\cR$.
By symmetry it also follows that the objects $\cO(i-1) \boxtimes_X \Omega^{k+i}(k+i)$ belong to~$\cR$.
Thus, all the objects in the right side of~\eqref{eq:cr-preliminary} are in~$\cR$.
Moreover, as we already pointed out, it also follows that~\eqref{eq:mutation-1} and~\eqref{eq:mutation-2} are mutation sequences.
Therefore, together with the line bundle~$\cO(-1,k)$ the objects in the right side of~\eqref{eq:cr-preliminary} generate~$\cR$.

Similarly, from Lemma~\ref{lemma:lb-vanishing} and the mutation sequences, orthogonality between the rows of~\eqref{eq:cr-preliminary} follows,
so it remains to compute $\Ext$-spaces between objects in each row as well as
from objects in the first row to $\cO(-1,k)$, and from $\cO(-1,k)$ to objects in the last~row.

For this we use~\eqref{eq:mutation-1} and~\eqref{eq:mutation-2} as resolutions for the source objects.
Since almost all terms in these sequences are in the rectangular part of~\eqref{eq:rectangular-an},
it follows that it is enough to compute $\Ext$-spaces from the rightmost objects only
(i.e., for the red dots from Picture~\ref{table:mutation-an}).
Thus, we need to describe the tensor products in~\eqref{eq:cohomology} for~$(a,b)$ satisfying
\begin{equation*}
0 \le a \le k - 1,\quad
b = a + k + 1
\qquad\text{or}\qquad
0 \le b \le k - 1,\quad
a = b + k + 1.
\end{equation*}
Clearly, the assumption $0 \le a \le k - 1$ contradicts all conditions in~\eqref{eq:a-conditions}.
On the other hand, if $0 \le b \le k - 1$ and $a = b + k + 1$, then $a \le 2k$, so the only compatible conditions are
\begin{equation*}
(a,b) = (k + i, i - 1)
\qquad\text{or}\qquad
(a,b) = (k + i - 1, i - 2).
\end{equation*}
Furthermore, in the first of these cases the first tensor product in~\eqref{eq:cohomology} is 1-dimensional
(and lives in the cohomological degree~$k+i$),
and in the second of these cases the second tensor product is 1-dimensional
(and lives in the same cohomological degree).
This computation proves that the only $\Ext$-spaces between the objects in the first and the last rows of~\eqref{eq:cr-preliminary}
are those listed in the first and last lines of~\eqref{eq:exts-cr-preliminary}.

Next, we compute $\Ext$-spaces from objects in the first row of~\eqref{eq:cr-preliminary} to~$\cO(-1,k)$.
As before we use~\eqref{eq:mutation-1} as resolutions.
The only object that appears in~\eqref{eq:mutation-1}
which is not semiorthogonal to $\cO(-1,k)$ is $\cO(2k,k-1)$ (see Lemma~\ref{lemma:lb-vanishing} and~\eqref{eq:ext-special}),
from which we have a 1-dimensional $\Ext$-space in degree~$2k$.
Therefore, the only $\Ext$-space from objects of the first row to~$\cO(-1,k)$
is given by the second line of~\eqref{eq:exts-cr-preliminary}.

Finally, we compute $\Ext$-spaces from~$\cO(-1,k)$ to objects in the last row of~\eqref{eq:cr-preliminary}.
First, we consider the Koszul exact sequence
\begin{equation*}
0 \to \cO(-1,k) \to
\Lambda^{2k+1}V^\vee \otimes \cO(0,k) \to \\
\dots \to
V^\vee \otimes \cO(2k,k) \to
\cO(2k+1,k) \to 0.
\end{equation*}
All its terms (except for the leftmost and rightmost) are in the rectangular part of~\eqref{eq:rectangular-an}, hence
\begin{equation*}
\Ext^p(\cO(-1,k), \cO(i-1) \boxtimes_X \Omega^{k+i}(k+i)) \cong
\Ext^{p + 2k + 1}(\cO(2k+1,k), \cO(i-1) \boxtimes_X \Omega^{k+i}(k+i)).
\end{equation*}
Now we use~\eqref{eq:mutation-2} as a resolution for $\cO(i-1) \boxtimes_X \Omega^{k+i}(k+i)$.
Clearly, the only nontrivial $\Ext$-space from $\cO(2k+1,k)$ to its terms is
\begin{equation*}
\Ext^{2k}(\cO(2k+1,k),\cO(0,k+1)) \cong H^{2k}(X,\cO(-2k-1,1)) \cong \Bbbk
\end{equation*}
(which holds for $i = k + 1$). Therefore, the only $\Ext$-space from $\cO(-1,k)$ to the last row of~\eqref{eq:rectangular-an}
is given by the third line of~\eqref{eq:exts-cr-preliminary}.

It is clear from the above description of $\Ext$-spaces
that the category $\cR$ is equivalent to the derived category of the Dynkin quiver~$\rA_{2k+1}$
(with the objects in~\eqref{eq:cr-preliminary} corresponding to simple representations of the quiver, up to shift).
\end{proof}

So far, the description of $\cR$ we obtained does not look symmetric with respect to outer automorphisms of $X$,
and also looks different from the description in Theorem~\ref{theorem:an}.
In the next statement we show that the two descriptions agree.

\begin{lemma}
\label{lemma:residual-an}
The category~$\cR$ defined by~\eqref{eq:cr-preliminary} is generated by the exceptional collection~\eqref{eq:residual-final}.
\end{lemma}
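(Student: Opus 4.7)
The plan is to relate the preliminary description~\eqref{eq:cr-preliminary} to the desired one~\eqref{eq:residual-final} via short exact sequences coming from the flag structure~$\cU_1 \subset \cU_n$ on~$X$. Identifying $p_1^*\Omega^1_{\bP(V)}(1) \cong \cU_1^\perp$ and $p_2^*\Omega^1_{\bP(V^\vee)}(1) \cong \cU_n$, and observing the sub-line-bundle inclusions $\cU_n^\perp \cong \cO(0,-1) \hookrightarrow \cU_1^\perp$ and $\cU_1 \cong \cO(-1,0) \hookrightarrow \cU_n$ with quotients $\cE^\vee$ and $\cE$ respectively, one obtains, by taking exterior powers and using $\det\cE \cong \cO(1,-1)$, the two key sequences
\begin{equation*}
0 \to \Lambda^{k-i+1}\cE(-1,i-1) \to \Omega^{k+i}(k+i) \boxtimes_X \cO(i-1) \to \Lambda^{k-i}\cE(-1,i) \to 0
\end{equation*}
and
\begin{equation*}
0 \to \Lambda^{k-i+1}\cE^\vee(i-1,-1) \to \cO(i-1) \boxtimes_X \Omega^{k+i}(k+i) \to \Lambda^{k-i}\cE^\vee(i,-1) \to 0
\end{equation*}
valid for $1 \le i \le k$, together with the isomorphism $\Lambda^k\cE(-1,0) \cong \Lambda^k\cE^\vee(0,-1)$.

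Next I would show that each object of~\eqref{eq:residual-final} lies in~$\cR$. The rectangular subcategory $\langle \cA, \dots, \cA \otimes \cO(n-1,n-1)\rangle$ is manifestly $\Aut(X)$-invariant, hence so is~$\cR$; in particular, $\cO(k,-1) = \sigma_B^*(\cO(-1,k)) \in \cR$. Starting from the base cases $\cO(-1,k), \cO(k,-1) \in \cR$ and inducting on~$j$, with $i = k-j+1$ the first sequence has middle term a preliminary generator in~$\cR$ and quotient $\Lambda^{j-1}\cE(-1,k-j+1)$ already in~$\cR$ by induction, hence the sub $\Lambda^j\cE(-1,k-j)$ lies in~$\cR$; a symmetric argument with the second sequence handles the $\cE^\vee$-side.

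For generation and exceptionality, the same two sequences read in reverse show that each preliminary generator lies in the triangulated span of two consecutive objects of~\eqref{eq:residual-final}, so the new collection generates~$\cR$. The two sequences can moreover be interpreted as mutation triangles: once the relevant $\Ext$-vanishings are verified, each preliminary generator $\Omega^{k+i}(k+i) \boxtimes_X \cO(i-1)$ is obtained as a left mutation of the pair $(\Lambda^{k-i+1}\cE(-1,i-1),\Lambda^{k-i}\cE(-1,i))$, and symmetrically on the other side. This realizes~\eqref{eq:residual-final} as an iterated mutation of~\eqref{eq:cr-preliminary} and yields exceptionality. The $\Aut(X)$-invariance of~\eqref{eq:residual-final} is then immediate from the evident symmetry $\cO(a,b) \leftrightarrow \cO(b,a)$ and $\cE \leftrightarrow \cE^\vee$ under~$\sigma_B^*$.

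The main obstacle I expect is the bookkeeping in verifying that the $2k+1$ objects in~\eqref{eq:residual-final} form an exceptional collection in the precise stated order, rather than merely a set of generators of~$\cR$. In practice, the cleanest route is to compute all $\Ext$-groups between the new objects directly, using the two key sequences as resolutions, thereby reducing to cohomology computations on $\bP(V) \times \bP(V^\vee)$ covered by Lemma~\ref{lemma:lb-vanishing} and Bott's formula; this simultaneously confirms exceptionality and exhibits the resulting quiver as~$\rA_{2k+1}$, identifying~$\cR$ with $\Db(\rA_n)$.
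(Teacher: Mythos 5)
Your proposal is correct and follows essentially the same route as the paper: the same two key short exact sequences obtained from exterior powers of the (dualized) tautological/Euler sequence, interpreted as right and left mutations of the preliminary generators of~\eqref{eq:cr-preliminary} through the already-constructed objects, with the remaining $\Ext$-computations reduced to the data of~\eqref{eq:exts-cr-preliminary}. Your packaging of the first step as a two-out-of-three induction (using $\Aut(X)$-invariance of~$\cR$ to get $\cO(k,-1)\in\cR$) is only a cosmetic variant of the paper's mutation argument.
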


\begin{proof}
The definition~\eqref{def:ce} of the object $\cE$ can be rewritten as the exact sequence
\begin{equation}
\label{eq:ce-1}
0 \to \cE \to \rT(-1) \boxtimes_X \cO \to \cO(0,1) \to 0.
\end{equation}
It follows that $\rk(\cE) = 2k$ and $\det(\cE) \cong \cO(1,-1)$, hence
\begin{equation}
\label{eq:wedge-ce}
\Lambda^i\cE^\vee \cong \Lambda^{2k-i}\cE \otimes \cO(-1,1).
\end{equation}
Dualizing~\eqref{eq:ce-1}, we obtain
\begin{equation*}
0 \to \cO(0,-1) \to \Omega(1) \boxtimes_X \cO \to \cE^\vee \to 0.
\end{equation*}
Taking its $(k+i)$-th exterior power and twisting by $\cO(0,i-1)$, we obtain
\begin{equation*}
0 \to \Lambda^{k+i-1}\cE^\vee(0,i-2) \to \Omega^{k+i}(k+i) \boxtimes_X \cO(i-1) \to \Lambda^{k+i}\cE^\vee(0,i-1) \to 0.
\end{equation*}
Using~\eqref{eq:wedge-ce}, we can rewrite this as
\begin{equation}
\label{eq:ce-om-ce}
0 \to \Lambda^{k-i+1}\cE(-1,i-1) \to \Omega^{k+i}(k+i) \boxtimes_X \cO(i-1) \to \Lambda^{k-i}\cE(-1,i) \to 0.
\end{equation}
Now we interpret these sequences as mutations of~\eqref{eq:cr-preliminary}:
\begin{itemize}
\item
For $i = k$, the second arrow in~\eqref{eq:ce-om-ce} is the unique (by~\eqref{eq:exts-cr-preliminary}) morphism
\begin{equation*}
\Omega^{2k}(2k) \boxtimes_X \cO(k-1) \to \cO(-1,k),
\end{equation*}
therefore its kernel (up to shift) is the right mutation of $\Omega^{2k}(2k) \boxtimes_X \cO(k-1)$ through~$\cO(-1,k)$,
and so by~\eqref{eq:ce-om-ce} the result of the mutation is $\cE(-1,k-1)$.
\item
For $i = k-1$, the second arrow in~\eqref{eq:ce-om-ce} is the unique morphism
\begin{equation*}
\Omega^{2k-1}(2k-1) \boxtimes_X \cO(k-2) \to \cE(-1,k-1),
\end{equation*}
therefore its kernel (up to shift) is the right mutation of $\Omega^{2k-1}(2k-1) \boxtimes_X \cO(k-2)$ through $\cE(-1,k-1)$,
and so the result of the mutation is $\Lambda^2\cE(-1,k-2)$.
Note here that the intermediate mutation of $\Omega^{2k-1}(2k-1) \boxtimes_X \cO(k-2)$ through $\cO(-1,k)$ is trivial,
because these objects are completely orthogonal by~\eqref{eq:exts-cr-preliminary}.
\end{itemize}
Continuing in the same manner, we conclude that the mutation of $\Omega^{2k-j}(2k-j) \boxtimes_X \cO(k-j-1)$
through $\langle \Omega^{2k-j+1}(2k-j+1) \boxtimes_X \cO(k-j), \dots, \Omega^{2k}(2k) \boxtimes_X \cO(k-1), \cO(-1,k) \rangle$
is the same as its mutation through $\langle \cO(-1,k), \cE(-1,k-1), \dots, \Lambda^j\cE(-1,k-j) \rangle$,
and is the same as its mutation through the last object $\Lambda^j\cE(-1,k-j)$.
Therefore, it is realized by the complex~\eqref{eq:ce-om-ce} with~$i = k - j$,
hence the result of the mutation is $\Lambda^{j+1}\cE(-1,k-j-1)$.

Thus, we see that the category $\cR$ is generated by the following exceptional collection
\begin{multline}
\label{eq:cr-intermediate}
\cR = \langle \cO(-1,k), \cE(-1,k-1), \dots, \Lambda^{k-1}\cE(-1,1), \Lambda^k\cE(-1,0),
\\
\cO \boxtimes_X \Omega^{k+1}(k+1),
\cO(1) \boxtimes_X \Omega^{k+2}(k+2), \dots,
\cO(k-1) \boxtimes_X \Omega^{2k}(2k)
\rangle,
\end{multline}
and moreover, the only $\Ext$-spaces between the objects in the first row and the second row of~\eqref{eq:cr-intermediate}, are
\begin{equation}
\label{eq:ext-cr-intermediate}
\Ext^\bullet(\Lambda^i\cE(-1,k-i),\cO(j-1) \boxtimes_X \Omega^{k+j}(k+j)) =
\begin{cases}
\Bbbk[i-k], & \text{if $j = 1$}\\
0, & \text{if $2 \le j \le k$.}
\end{cases}
\end{equation}
Note also that by~\eqref{eq:wedge-ce} the last term in the first line of~\eqref{eq:cr-intermediate}
is isomorphic to $\Lambda^k\cE^\vee(0,-1)$.

For the second half of mutations, we note that applying an outer automorphism of $X$ to~\eqref{eq:ce-om-ce} we obtain exact sequences
\begin{equation}
\label{eq:ce-om-ce-2}
0 \to \Lambda^{k-i+1}\cE^\vee(i-1,-1) \to \cO(i-1) \boxtimes_X \Omega^{k+i}(k+i) \to \Lambda^{k-i}\cE^\vee(i,-1) \to 0.
\end{equation}
Now we interpret these sequences as mutations. 
Note that~$\Lambda^{k}\cE(-1,0) \cong \Lambda^{k}\cE^\vee(0,-1)$ by~\eqref{eq:wedge-ce}).
\begin{itemize}
\item
For $i = 1$, the first arrow in~\eqref{eq:ce-om-ce-2} is the unique (by~\eqref{eq:ext-cr-intermediate}) morphism
\begin{equation*}
\Lambda^{k}\cE^\vee(0,-1) \to \cO \boxtimes_X \Omega^{k+1}(k+1),
\end{equation*}
therefore its cokernel (up to shift) is the left mutation of the object $\cO \boxtimes_X \Omega^{k+1}(k+1)$ through~$\Lambda^{k}\cE^\vee(0,-1)$,
and so the result of the mutation is $\Lambda^{k-1}\cE^\vee(1,-1)$.
\item
For $i = 2$, the first arrow in~\eqref{eq:ce-om-ce-2} is the unique morphism
\begin{equation*}
\Lambda^{k-1}\cE^\vee(1,-1) \to \cO(1) \boxtimes_X \Omega^{k+2}(k+2),
\end{equation*}
therefore its cokernel (up to shift) is the left mutation of $\cO(1) \boxtimes_X \Omega^{k+2}(k+2)$ through~$\Lambda^{k-1}\cE^\vee(1,-1)$,
and so the result of the mutation is $\Lambda^{k-2}\cE^\vee(2,-1)$.
Note here that the intermediate mutation of $\cO(1) \boxtimes_X \Omega^{k+2}(k+2)$ through $\Lambda^{k}\cE^\vee(0,-1)$ is trivial,
because these objects are completely orthogonal by~\eqref{eq:ext-cr-intermediate}.
\end{itemize}
Continuing in the same manner, we conclude that the mutation of $\cO(i-1) \boxtimes_X \Omega^{k+i}(k+i)$
through $\langle \Lambda^{k}\cE^\vee(0,-1), \cO \boxtimes_X \Omega^{k+1}(k+1), \dots, \cO(i-2) \boxtimes_X \Omega^{k+i-1}(k+i-1) \rangle$
is the same as its mutation through $\langle \Lambda^{k-i+1}\cE^\vee(i-1,-1), \dots, \Lambda^{k-1}\cE^\vee(1,-1), \Lambda^{k}\cE^\vee(0,-1) \rangle$,
and the same as its mutation through the first object $\Lambda^{k-i+1}\cE^\vee(i-1,-1)$.
Therefore, it is realized by the complex~\eqref{eq:ce-om-ce-2},
hence the result of the mutation is $\Lambda^{k-i+1}\cE^\vee(i-1,-1)$.

Thus, we see that the category $\cR$ is generated by~\eqref{eq:residual-final}.
Moreover, we see that the only $\Ext$-spaces between the objects in~\eqref{eq:residual-final} are
\begin{align*}
&\Ext^{j-i}(\Lambda^i\cE\hphantom{{}^\vee}(0,k-i), \Lambda^j\cE\hphantom{{}^\vee}(0,k-j)) = \Bbbk, && 0 \le i \le j \le k,
\\
&\Ext^{j-i}(\Lambda^i\cE^\vee(0,k-i), \Lambda^j\cE^\vee(0,k-j)) = \Bbbk, && 0 \le i \le j \le k.
\end{align*}
In other words, the configuration of these exceptional objects is the following:
\begin{equation*}
\xymatrix@C=.8em@R=1ex{
\cO(-1,k)[-k] \ar[r] &
\cE(-1,k-1)[1-k] \ar[r] &
\dots \ar[r] &
\Lambda^{k-1}\cE(-1,1)[-1] \ar[dr]
\\
&&&& \Lambda^k\cE(-1,0) \cong \Lambda^k\cE^\vee(0,-1)
\\
\cO(k,-1)[-k] \ar[r] &
\cE^\vee(k-1,-1)[1-k] \ar[r] &
\dots \ar[r] &
\Lambda^{k-1}\cE^\vee(1,-1)[-1] \ar[ur]
}
\end{equation*}
So, this exceptional collection corresponds to shifts of projective modules in a quiver of Dynkin type $\rA_{2k+1}$.
\end{proof}

\begin{remark}
\label{remark:an-weights}
The flag variety $X$ is a homogeneous space for the action of the reductive group~$\SL(n+1) = \SL(V)$.
Using the group-theoretic notation introduced in~\S\ref{section:preliminaries},
and denoting by~$\omega_i$ the highest weight of the fundamental representation~$\Lambda^iV^\vee \cong \Lambda^{n+1-i}V$,
one can rewrite the exceptional collection of Theorem~\ref{theorem:an} as follows.
The first block of the rectangular part~$\cA$ of $\Db(X)$ can be written as
\begin{equation*}
  \cA = \langle \cO, \cU^{\omega_1}, \cU^{\omega_n}, \cU^{2\omega_1}, \cU^{2\omega_n}, \dots,  \cU^{k\omega_1}, \cU^{k\omega_n} \rangle
\end{equation*}
Furthermore, if $n = 2k + 1$ the residual category $\cR$ can be written as
\begin{multline*}
\cR = \langle
\cU^{-\omega_1 + k\omega_{n} }, \cU^{k\omega_1 - \omega_{n}};
\cU^{-\omega_1 + \omega_{n-1} + (k-2)\omega_{n}}, \cU^{(k-2)\omega_1 + \omega_2 - \omega_{n}}; \dots \\
\dots;
\cU^{-\omega_1 + \omega_{n-i} + (k-i-1)\omega_{n}}, \cU^{(k-i-1)\omega_1 + \omega_{i+1} - \omega_{n}};
\dots \\
\cU^{- \omega_1 + \omega_{k+2} }, \cU^{\omega_{k} - \omega_{n}};
\cU^{- \omega_1 + \omega_{k+1}  - \omega_{n}}
\rangle.
\end{multline*}
\end{remark}

\section{Type $\rD_n$}
\label{section:dn}

In this section we prove part~\eqref{theorem:intro:dn} of Theorem~\ref{theorem:intro},
restated in a more precise form in Theorem~\ref{theorem:dn} below.
We work over an algebraically closed field~$\Bbbk$ of characteristic zero.
Let us point out again that, unlike in type~$\rA_n$, a full exceptional collection in~$\Db(\OG(2,2n))$ was not known before.
Throughout this section we assume~$n \ge 4$.

\subsection{Equivariant bundles on homogeneous varieties}
\label{section:preliminaries}

We start with a brief reminder about equivariant vector bundles on homogeneous varieties.
For more details see~\cite{KuPo}.

Let $\rG$ be a connected simply connected semisimple algebraic group. We fix a Borel subgroup $\rB \subset \rG$.
Let $\rP \subset \rG$ be a parabolic subgroup such that $\rB \subset \rP$. Recall that there is a monoidal equivalence of categories
between the category of~$\rG$-equivariant vector bundles on the homogeneous variety~$\rG/\rP$
and the category~$\Rep(\rP)$ of representations of the parabolic subgroup $\rP$.

Let $\rP \twoheadrightarrow \rL$ be the Levi quotient.
We identify the category~$\Rep(\rL)$ of representations of~$\rL$ with the subcategory of~$\Rep(\rP)$ of representations
with the trivial action of the unipotent radical.
This equivalence is compatible with the monoidal structure of the categories.

The image of the Borel subgroup $\rB \subset \rG$ is a Borel subgroup~$\rB_\rL$ in~$\rL$.
We denote by~\mbox{$\bfP_\rL = \bfP_\rG$} the weight lattices of $\rL$ and $\rG$ with their natural identification, and by
\begin{equation*}
\bfP_\rG^+ \subset \bfP_\rL^+
\end{equation*}
the cones of dominant weights with respect to~$\rB$ and~$\rB_\rL$, respectively.
For each $\lambda \in \bfP_\rL^+$
we denote by $\rV_\rL^\lambda$
the corresponding irreducible representation of $\rL$
and by $\cU^\lambda$ the equivariant vector bundle on $\rG/\rP$ corresponding to it via the above equivalences.
Similarly, for $\lambda \in \bfP_\rG^+$ we denote by $\rV_\rG^\lambda$ the corresponding irreducible representation of $\rG$.

We denote by~$\rW$ the Weyl group of $\rG$, by $\rW^\rL \subset \rW$ the Weyl group of $\rL$,
and by~$w_0 \in \rW$ and~$w_0^\rL \in \rW^\rL$ the longest elements.

\subsection{Statement of the theorem}
\label{subsection:theorem-dn}

Consider the group~\mbox{$\rG = \Spin(2n)$}, the simply connected (double) covering
\begin{equation}
\label{eq:double-covering}
\Spin(2n) \to \SO(2n)
\end{equation}
of the special orthogonal group, and its maximal parabolic subgroup $\rP \subset \rG$ corresponding to the vertex~2 (marked with black) of the Dynkin diagram $\rD_n$
\begin{equation}
\label{eq:dn}
\vcenter{\hbox{\begin{picture}(200,30)
\multiput(0,10)(30,0){3}{\circle{4}}
\put(30,10){\circle*{4}}
\put(90,10){\hbox to 0mm{\hss\dots\hss}}
\put(120,10){\circle{4}}
\put(150,10){\circle{4}}
\put(180,20){\circle{4}}
\put(180,0){\circle{4}}
\multiput(2,10)(30,0){2}{\line(1,0){26}}
\put(62,10){\line(1,0){10}}
\put(108,10){\line(1,0){10}}
\put(122,10){\line(1,0){26}}
\put(152,11){\line(3,1){26}}
\put(152,9){\line(3,-1){26}}
\put(-2,15){$\scriptstyle{}1$}
\put(28,15){$\scriptstyle{}2$}
\put(58,15){$\scriptstyle{}3$}
\put(112,15){$\scriptstyle{}n-3$}
\put(142,15){$\scriptstyle{}n-2$}
\put(184,20){$\scriptstyle{}n-1$}
\put(184,0){$\scriptstyle{}n$}
\end{picture}}}
\end{equation}
In this case the corresponding homogeneous variety is
\begin{equation*}
\rG/\rP \cong \OG(2,2n),
\end{equation*}
the Grassmannian of 2-dimensional isotropic subspaces in a vector space of dimension~$2n$
endowed with a non-degenerate symmetric bilinear form, i.e., the (co)adjoint variety of type~$\rD_n$.
Note that $\dim(\OG(2,2n)) = 4n - 7$.

The automorphism group of $\OG(2,2n)$ is the semidirect product
\begin{equation*}
\Aut(\OG(2,2n)) \cong
\begin{cases}
\PSO(2n) \rtimes \fS_2, & \text{if $n \ne 4$}\\
\PSO(2n) \rtimes \fS_3, & \text{if $n = 4$}\\
\end{cases}
\end{equation*}
where the factors $\fS_2$ and $\fS_3$ act by outer automorphisms (corresponding to the symmetry of the Dynkin diagram $\rD_n$).

The weight lattice of $\SO(2n)$ is the lattice $\bZ^n$ with the standard basis~$\{\epsilon_i\}_{1 \le i \le n}$,
and the weight\ lattice of~$\Spin(2n)$ is its overlattice in~$\bQ^n$ generated by the fundamental weights
\begin{equation}
\label{eq:dn-weights}
\begin{aligned}
\omega_i &= \epsilon_1 + \dots + \epsilon_i, && 1 \le i \le n-2, \\
\omega_{n-1} &= \tfrac12(\epsilon_1 + \dots + \epsilon_{n-1} - \epsilon_n), \\
\omega_{n} &= \tfrac12(\epsilon_1 + \dots + \epsilon_{n-1} + \epsilon_n).
\end{aligned}
\end{equation}

The Levi group in $\SO(2n)$ corresponding to the second vertex of the Dynkin diagram is isomorphic
to $\GL(2) \times \SO(2(n-2))$,
and the corresponding Levi group in $\Spin(2n)$ is the double covering
\begin{equation}
\label{eq:levi-covering}
\rL \to \GL(2) \times \SO(2(n-2))
\end{equation}
induced by~\eqref{eq:double-covering}.

The fundamental weight $\omega_2$ (associated with the parabolic $\rP$) corresponds to the ample generator of the Picard group $\Pic(\rG/\rP)$,
so we write
\begin{equation}
\label{eq:dn-o1}
\cO(1) := \cU^{\omega_2}.
\end{equation}
Note that the canonical line bundle can be written as
\begin{equation}
\label{eq:omega-dn}
\omega_{\OG(2,2n)} \cong \cO_{\OG(2,2n)}(3 - 2n).
\end{equation}
Furthermore, we use the following notation
\begin{equation}
\label{eq:bundles-dn}
\begin{aligned}
\cU &:= (\cU^{\omega_1})^\vee,\\
\cS_- &:= \cU^{\omega_{n-1}},\\
\cS_+ &:= \cU^{\omega_{n}}.
\end{aligned}
\end{equation}
Thus, $\cU$ is the tautological rank-2 bundle of $\OG(2,2n)$, while $\cS_\pm$ are the \emph{spinor bundles} (see~\cite[\S6]{Ku08a}).

Consider the following full triangulated subcategories of $\Db(\OG(2,2n))$
\begin{equation}
\label{eq:a-b-dn}
\begin{aligned}
\cA &:= \langle \cO , \cU^\vee, S^2 \cU^\vee, \dots , S^{n-3} \cU^\vee, \cS_- , \cS_+ \rangle, \\
\cB &:= \langle \cO , \cU^\vee, S^2 \cU^\vee, \dots , S^{n-3} \cU^\vee, S^{n-2} \cU^\vee, \cS_- , \cS_+ \rangle.
\end{aligned}
\end{equation}
Note that $\cA \subset \cB$ and $\cA$ is $\Aut(\OG(2,2n))$-invariant.
Indeed, every object in~$\cA$ is $\PSO(2n)$-invariant by~\cite[Lemma~2.2]{Po}.
Moreover, if $n \ne 4$ the outer automorphisms $\fS_2$-action on~$\OG(2,2n)$ swaps the spinor bundles $\cS_-$ and~$\cS_+$,
while in case $n = 4$ the outer automorphisms $\fS_3$-action permutes $\cS_-$, $\cS_+$, and~$ \cU^\vee$
(and in this case $\cA = \langle \cO , \cU^\vee, \cS_- , \cS_+ \rangle$).

\begin{theorem}
\label{theorem:dn}
There exists a full exceptional collection
\begin{equation}
\label{eq-collection-OG-even-I}
\begin{split}
\Db(\OG(2,2n)) = \langle & \cU^{2\omega_{n-1}}(-1), \cU^{2\omega_{n}}(-1) , \cA , \\
& \cB(1) , \dots , \cB(n-2) , \cA(n-1) , \dots ,\cA(2n-4) \rangle.
\end{split}
\end{equation}
In particular, the subcategory $\cA$ extends to an $\Aut(\OG(2,2n))$-invariant rectangular Lefschetz collection
\begin{equation}
\label{eq:rectangular-dn}
\Db(\OG(2,2n)) = \langle \cR, \cA, \cA(1), \dots, \cA(2n - 4) \rangle.
\end{equation}
Moreover, the residual category $\cR$ is generated by an~$\Aut(\OG(2,2n))$-invariant exceptional collection and
is equivalent to the derived category of representations of the Dynkin quiver~$\rD_n$.
\end{theorem}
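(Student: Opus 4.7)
The plan is to prove the theorem in three stages: verifying semiorthogonality of the collection \eqref{eq-collection-OG-even-I}, proving its fullness, and identifying the residual category with $\Db(\rD_n)$.

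For the first stage I would verify that \eqref{eq-collection-OG-even-I} is a semiorthogonal exceptional collection via Borel--Weil--Bott on the homogeneous space $\Spin(2n)/\rB$. All constituents are $\Spin(2n)$-equivariant with explicit weights given by \eqref{eq:dn-weights}, \eqref{eq:dn-o1}, \eqref{eq:bundles-dn}, so every $\Ext^\bullet(\cU^\mu(a),\cU^\nu(b))$ reduces to the cohomology of the equivariant bundle associated to $(\rV_\rL^\mu)^\vee\otimes \rV_\rL^\nu$ twisted by $(b-a)\omega_2$, which in turn is computed by Borel--Weil--Bott on the full flag variety. Organizing the verification blockwise --- first $\cA$, then $\cB\supset\cA$, then semiorthogonality of the sequence $\cA,\cB(1),\ldots,\cB(n-2),\cA(n-1),\ldots,\cA(2n-4)$, and finally the left-orthogonality of the two initial objects $\cU^{2\omega_{n-1}}(-1)$, $\cU^{2\omega_n}(-1)$ to everything that follows --- reduces the problem to a finite list of weight-theoretic vanishings. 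The total length is $2+n+(n-2)(n+1)+(n-2)n=2n(n-1)$, matching $\rk K_0(\OG(2,2n))$.

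Proving fullness is the main obstacle, since no full exceptional collection on $\OG(2,2n)$ was known before. My approach goes through the isotropic $2$-step flag variety $Y$, sitting in
\begin{equation*}
\OG(2,2n)\xleftarrow{\ \pi_2\ } Y\xrightarrow{\ \pi_1\ }\OG(1,2n)=Q^{2n-2},
\end{equation*}
where $\pi_2$ is a $\bP^1$-bundle and $\pi_1$ is a $Q^{2n-4}$-bundle (the fiber over an isotropic line $L$ being the quadric of isotropic lines in $L^\perp/L$). Combining Kapranov's full exceptional collection for $Q^{2n-2}$ with the relative Kapranov collection along the $Q^{2n-4}$-bundle $\pi_1$, and then descending via Orlov's $\bP^1$-bundle formula for $\pi_2$, yields an explicit full exceptional collection on $\Db(\OG(2,2n))$. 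A sequence of mutations --- in the same spirit as Lemmas~\ref{lemma:an-even} and \ref{lemma:mutation-1} in the $\rA_n$ case, identifying pieces that are right-orthogonal to the rectangular part and transporting them by the anticanonical twist $\cO(2n-3)$ --- then reshapes it into \eqref{eq-collection-OG-even-I}. As a safety net, once semiorthogonality and the length match are in place, one can alternatively prove fullness by showing that the classes of the proposed exceptional objects span $K_0(\OG(2,2n))$ (for instance by expressing Schubert classes in this basis) and concluding from admissibility that the orthogonal of the generated subcategory vanishes.

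Once the collection is full, the rectangular Lefschetz decomposition~\eqref{eq:rectangular-dn} follows from $\cA\subset\cB$, and $\Aut(\OG(2,2n))$-invariance from $\PSO(2n)$-equivariance of the generators of $\cA$ together with the outer-automorphism swap $\cS_-\leftrightarrow\cS_+$ (checking compatibility with triality when $n=4$). The residual category $\cR$ is tautologically generated by the $n$ objects
\begin{equation*}
\cU^{2\omega_{n-1}}(-1),\ \cU^{2\omega_n}(-1),\ S^{n-2}\cU^\vee(1),\ \ldots,\ S^{n-2}\cU^\vee(n-2)
\end{equation*}
of \eqref{eq-collection-OG-even-I} that lie outside the rectangular blocks, and a further Borel--Weil--Bott computation of pairwise $\Ext$-spaces should yield the expected $\rD_n$-pattern: the symmetric powers $S^{n-2}\cU^\vee(i)$ form a chain of type $\rA_{n-2}$ (a single non-vanishing $\Ext$, in one degree, between consecutive objects); $\cU^{2\omega_{n-1}}(-1)$ and $\cU^{2\omega_n}(-1)$ are completely orthogonal both to each other and to $S^{n-2}\cU^\vee(i)$ for $i\ge 2$; and each of the two spinor-type objects has a single $\Ext$-space in the same cohomological degree to $S^{n-2}\cU^\vee(1)$. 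This is precisely the shape of the shifts of projective modules over the Dynkin quiver $\rD_n$, identifying $\cR\cong\Db(\rD_n)$ and matching the $\fS_2$-swap of the two leaves.
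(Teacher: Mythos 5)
Your first stage (exceptionality via Borel--Bott--Weil, organized blockwise) is exactly what the paper does, and is fine modulo carrying out the weight computations. The problems are in the other two stages.

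\textbf{Fullness.} Your route through the flag variety $Y = \OG(1,2;2n)$ does not work as described. Orlov's projective bundle formula for the $\bP^1$-bundle $\pi_2\colon Y\to\OG(2,2n)$ exhibits $\Db(\OG(2,2n))$ as a semiorthogonal \emph{component} of $\Db(Y)$; it does not let you ``descend'' a full exceptional collection from $Y$ to the base. Producing a full exceptional collection on a semiorthogonal component from one on the ambient category is precisely the hard part, and ``a sequence of mutations reshapes it'' is not an argument --- unlike in the $\rA_n$ case, where the mutations are through explicitly orthogonal line bundles, here you have given no candidate mutations. (There is also a secondary issue: the fibers of $\pi_1$ are \emph{even}-dimensional quadrics $Q^{2n-4}$, so the relative Kapranov collection requires relative spinor bundles, i.e.\ control of the even Clifford algebra of the quadric bundle.) Your ``safety net'' is also invalid: if the collection is exceptional and spans $K_0$, its right orthogonal is a category with trivial $K_0$, which does not force it to vanish (this is the phantom problem); so length matching plus a $K_0$ computation never proves fullness. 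The paper's actual mechanism is different and essential: it restricts to the hyperplane sections $\OG(2,v^\perp)\cong\OG(2,2n-1)$ for non-isotropic $v$, where fullness is known from~\cite{Ku08a}, and the key input making this work is Lemma~\ref{key-lemma} ($E\otimes\cO(1)$ and $E\otimes\cU^\vee$ stay in the collection), which itself rests on the nontrivial exact sequence~\eqref{eq-OG-2-2n-exact-sequence-for-Sym} built from the filtration of $\bfS_\pm\otimes\cS_\pm$ by $\Lambda^\bullet\cU^\perp$. Nothing in your proposal replaces this.

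\textbf{Residual category.} The claim that $\cR$ is ``tautologically generated'' by $\cU^{2\omega_{n-1}}(-1)$, $\cU^{2\omega_n}(-1)$, $S^{n-2}\cU^\vee(1),\dots,S^{n-2}\cU^\vee(n-2)$, with the $\rD_n$-pattern read off from their pairwise $\Ext$'s, is wrong. The objects $S^{n-2}\cU^\vee(i)$ do not lie in $\cR$: they sit inside $\cB(i)$, interleaved with the rectangular blocks, and must first be mutated through $\langle\cA,\dots,\cA(i)\rangle$. This mutation is nontrivial --- the paper computes $\bL_{\langle\cA,\dots,\cA(i)\rangle}(S^{n-2}\cU^\vee(i))\cong F_i(i-1)[n-1+i]$ using truncations of~\eqref{eq-OG-2-2n-exact-sequence-for-Sym} --- and it changes the $\Ext$ pattern. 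Indeed, for $1\le i<j\le n-2$ one has $\Ext^\bullet(S^{n-2}\cU^\vee(j),S^{n-2}\cU^\vee(i))=0$ by Lemma~\ref{lemma:so1} (the twist $t=j-i$ lies in $[1,n-3]$, outside $\{n-2,n-1\}$), so the un-mutated symmetric powers show no $\rA_{n-2}$ chain at all; the chain $\Ext^\bullet(F_i(i-1),F_{i+1}(i))=\Bbbk$ only appears after mutation, traced back to the twist $t=n-2$ in Lemma~\ref{lemma:so1}. So your proposed final computation would produce the wrong quiver. The construction of the complex~\eqref{eq-OG-2-2n-exact-sequence-for-Sym} and of the truncated objects $F_i$ is the missing ingredient for this stage as well.
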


\begin{remark}
It is easy to see that the exceptional collection~\eqref{eq-collection-OG-even-I} is mutation equivalent to the full Lefschetz collection with the starting block
\begin{equation*}
\begin{split}
E_i & = S^{i-1} \cU^\vee,\quad 1 \le i \le n-2, \\
E_{n-1} & = \cS_-, \\
E_{{n}} & = \cS_+, \\
E_{n+1} &= \bR_{\langle \cS_-, \cS_+ \rangle}(S^{n-2}\cU^\vee), \\
E_{n+2} &= \bR_{\langle \cA(-1),\cB \rangle} (\cU^{2\omega_{n-1}}(-2)), \\
E_{n+3} &= \bR_{\langle \cA(-1),\cB \rangle} (\cU^{2\omega_{n}}(-2)),
\end{split}
\end{equation*}
with the next $n-3$ blocks generated by~$E_1\dots,E_{n+1}$, and with the last $n-1$ blocks generated by~$E_1,\dots,E_n$.
Indeed, to prove this we first twist~\eqref{eq-collection-OG-even-I} by~$\cO(-1)$,
then mutate the objects~$\cU^{2\omega_{n-1}}(-2), \cU^{2\omega_{n}}(-2)$ to the right of~$\cA(-1)$ and~$\cB$,
and finally mutate~$\cA(-1)$ to the far right.
\end{remark}

\begin{remark}
If $n = 3$ we have isomorphisms $\Spin(6) \cong \SL(4)$ and $\OG(2,6) \cong \Fl(1,3;4)$.
Furthermore, under these isomorphisms $\cS_- \cong \cO(1,0)$ and $\cS_+ \cong \cO(0,1)$,
so the definitions of the category~$\cA$ in~\eqref{eq:ca-an} and~\eqref{eq:a-b-dn} coincide,
hence~\eqref{eq:rectangular-an} coincides with~\eqref{eq:rectangular-dn}.
\end{remark}

We prove Theorem~\ref{theorem:dn} in Sections~\ref{subsection:exceptional-dn}, \ref{subsection:fullness-dn} and~\ref{subsection:residual-dn}
after some preparation.

\subsection{Computational tools}
\label{subsection:computational-dn}

To check that a collection of vector bundles is exceptional, we need some tools to dualize the bundles,
take their tensor products, and compute the cohomology.

We start by reminding the general machinery.
Recall that the Levi group $\rL$ is reductive, hence $\Rep(\rL)$ is a semisimple category;
in particular $\rV_\rL^\lambda \otimes \rV_\rL^\mu$ is a direct sum of irreducible representations of~$\rL$.
Since the functor $\rV_\rL^\lambda \mapsto \cU^\lambda$
from the category $\Rep(\rL)$ to the category of equivariant vector bundles on $\rG/\rP$ is monoidal, we have the following

\begin{lemma}[{\cite[(8)]{KuPo}}]
\label{lemma:tp-general}
If $\rV_\rL^\lambda \otimes \rV_\rL^\mu = \bigoplus \rV_\rL^\nu$, then $\cU^\lambda \otimes \cU^\mu = \bigoplus \cU^\nu$.
\end{lemma}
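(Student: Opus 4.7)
The plan is to reduce the claim to the monoidality of the equivalence recalled in Section~\ref{section:preliminaries}. First I would recall that the associated-bundle construction $V \mapsto \rG \times^\rP V$ furnishes an equivalence between $\Rep(\rP)$ and the category of $\rG$-equivariant vector bundles on $\rG/\rP$, and that this equivalence is \emph{monoidal}: on both sides the tensor product is computed fiberwise (respectively, pointwise) and the associated-bundle construction commutes with it. Under this equivalence, the bundle $\cU^\lambda$ corresponds to the $\rP$-representation obtained by inflating $\rV_\rL^\lambda$ along the Levi quotient $\rP \twoheadrightarrow \rL$.

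Next I would observe that the inflation functor $\Rep(\rL) \to \Rep(\rP)$ is itself monoidal: if the unipotent radical of $\rP$ acts trivially on each of $\rV_\rL^\lambda$ and $\rV_\rL^\mu$, then it acts trivially on their tensor product, and the resulting $\rL$-action on the tensor product coincides with the one induced from the $\rP$-action. Composing the two monoidal functors, a decomposition $\rV_\rL^\lambda \otimes \rV_\rL^\mu = \bigoplus \rV_\rL^\nu$ in $\Rep(\rL)$ lifts first to an identical decomposition in $\Rep(\rP)$ and then to the desired decomposition $\cU^\lambda \otimes \cU^\mu = \bigoplus \cU^\nu$ of equivariant bundles on $\rG/\rP$. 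There is no real obstacle here: the only point that requires verification is that the equivalence between $\Rep(\rP)$ and $\rG$-equivariant bundles on $\rG/\rP$ is monoidal, and this follows immediately by comparing fibers at the base class $\rP/\rP \in \rG/\rP$.
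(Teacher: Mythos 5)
Your proposal is correct and coincides with the paper's own justification: the paper simply invokes the monoidality of the composite functor $\rV_\rL^\lambda \mapsto \cU^\lambda$ (inflation $\Rep(\rL)\hookrightarrow\Rep(\rP)$ followed by the monoidal equivalence of $\Rep(\rP)$ with $\rG$-equivariant bundles on $\rG/\rP$, both recalled in \S\ref{section:preliminaries}), exactly as you do, and otherwise refers to \cite{KuPo} for the details.
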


Thus, knowing tensor products of $\rL$-representations, we can control tensor products of the corresponding equivariant bundles.
Similarly, we can control the dualization operation.
Recall that~$w_0^\rL$ denotes the longest element in $\rW^\rL$, the Weyl group of~$\rL$.

\begin{lemma}[{\cite[(8)]{KuPo}}]
\label{lemma:duals-general}
We have $(\cU^\lambda)^\vee \cong \cU^{-w_0^\rL \lambda}$.
\end{lemma}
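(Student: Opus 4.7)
The plan is to bootstrap the statement from the analogous statement for $\rL$-representations by invoking the monoidal equivalence recalled in \S\ref{section:preliminaries}. Under this equivalence, the $\rG$-equivariant bundle $\cU^\lambda$ on $\rG/\rP$ corresponds to the irreducible $\rL$-representation $\rV_\rL^\lambda$, viewed as a $\rP$-representation via the Levi quotient $\rP \twoheadrightarrow \rL$, and dualization of equivariant bundles is intertwined with dualization of representations (this is a formal consequence of monoidality, since duals are defined in any rigid monoidal category by the same universal property). Hence it suffices to identify $(\rV_\rL^\lambda)^\vee$ as an $\rL$-module and read off its highest weight.

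First I would observe that since $\rL$ is reductive, $(\rV_\rL^\lambda)^\vee$ is itself irreducible, hence determined by its highest weight. The weights of $(\rV_\rL^\lambda)^\vee$ are the negatives of the weights of $\rV_\rL^\lambda$, so the highest weight of the dual equals the negative of the \emph{lowest} weight of $\rV_\rL^\lambda$. A standard fact about irreducible highest-weight representations of a reductive group is that the lowest weight is obtained from the highest by applying the longest element of the Weyl group; thus the lowest weight of $\rV_\rL^\lambda$ equals $w_0^\rL\lambda$. Combining these, the highest weight of $(\rV_\rL^\lambda)^\vee$ is $-w_0^\rL\lambda$, giving $(\rV_\rL^\lambda)^\vee \cong \rV_\rL^{-w_0^\rL\lambda}$, which transports back across the equivalence to $(\cU^\lambda)^\vee \cong \cU^{-w_0^\rL\lambda}$.

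There is no real obstacle here: the argument is entirely structural, using only the monoidal equivalence (already recalled) together with the elementary description of lowest weights in irreducible $\rL$-modules. The one sanity check worth mentioning is that $-w_0^\rL\lambda$ is again dominant whenever $\lambda$ is, so the right-hand side is a well-defined object of the form $\cU^\mu$ with $\mu \in \bfP_\rL^+$; this is immediate because $-w_0^\rL$ is an involution of $\bfP_\rL$ that preserves the dominant cone with respect to $\rB_\rL$ (the so-called opposition involution).
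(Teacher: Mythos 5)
Your argument is correct and is the standard one: the paper itself offers no proof of this lemma, simply citing \cite[(8)]{KuPo}, and your derivation (monoidal equivalence intertwines duals; the dual of $\rV_\rL^\lambda$ is irreducible with highest weight $-w_0^\rL\lambda$ since the lowest weight of $\rV_\rL^\lambda$ is $w_0^\rL\lambda$) is exactly the expected justification. The sanity check that $-w_0^\rL$ preserves the dominant cone $\bfP_\rL^+$, and the implicit fact that the subcategory $\Rep(\rL)\subset\Rep(\rP)$ is closed under dualization (the unipotent radical acts trivially on the dual if it does on the original), are both correctly handled.
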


A combination of the last two lemmas shows that $(\cU^\lambda)^\vee \otimes \cU^\mu$ is a direct sum of some $\cU^\nu$.
The summands that show up in this decomposition can be characterised by the following

\begin{lemma}[{\cite[Lemma~2.8 and Lemma~2.9]{KuPo}}]
\label{lemma:prv}
We have
\begin{equation*}
(\cU^\lambda)^\vee \otimes \cU^\mu =
\bigoplus_{\nu \in \bfP_\rL^+ \cap {} \Conv(\mu - w\lambda)_{w \in \rW_\rL}} (\cU^\nu)^{\oplus m(\lambda,\mu,\nu)},
\end{equation*}
where $\Conv(-)$ stands for the convex hull and $m(\lambda,\mu,\nu) \in \bZ_{\ge 0}$
is the multiplicity of $\rV^\nu_\rL$ in~$(\rV_\rL^\lambda)^\vee \otimes \rV_\rL^\mu$.
Moreover, if $\nu = 0$ then $m(\lambda,\mu,\nu) \ne 0$ only when $\lambda = \mu$ and~\mbox{$m(\lambda,\lambda,0) = 1$}.
\end{lemma}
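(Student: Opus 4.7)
The plan is to translate the statement into representation theory of the Levi subgroup $\rL$ via the monoidal equivalence already invoked in Lemma~\ref{lemma:tp-general} and Lemma~\ref{lemma:duals-general}. Under this equivalence $(\cU^\lambda)^\vee \otimes \cU^\mu$ corresponds to $(\rV_\rL^\lambda)^\vee \otimes \rV_\rL^\mu$, and since $\rL$ is reductive in characteristic zero, semisimplicity of $\Rep(\rL)$ yields the abstract decomposition
\[
(\cU^\lambda)^\vee \otimes \cU^\mu \;=\; \bigoplus_{\nu \in \bfP_\rL^+} (\cU^\nu)^{\oplus m(\lambda,\mu,\nu)}
\]
with $m(\lambda,\mu,\nu) = \dim \Hom_\rL(\rV_\rL^\nu,(\rV_\rL^\lambda)^\vee \otimes \rV_\rL^\mu)$. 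It then remains to establish two independent assertions: (a) restriction of the summation to dominant $\nu \in \Conv(\mu - w\lambda)_{w \in \rW_\rL}$, and (b) the claim about the trivial summand.

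Part (b) is the easier half. I would argue that $m(\lambda,\mu,0)$ equals the dimension of the space of $\rL$-invariants in $(\rV_\rL^\lambda)^\vee \otimes \rV_\rL^\mu$, which is $\dim \Hom_\rL(\rV_\rL^\lambda,\rV_\rL^\mu)$; by Schur's lemma applied to the irreducible $\rL$-representations $\rV_\rL^\lambda$ and $\rV_\rL^\mu$, this is $1$ when $\lambda = \mu$ and $0$ otherwise.

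For part (a), the starting observation is that any $\nu$ appearing in the decomposition is in particular a weight of $(\rV_\rL^\lambda)^\vee \otimes \rV_\rL^\mu$, so $\nu = \mu_0 - \lambda_0$ where $\mu_0$ is a weight of $\rV_\rL^\mu$ and $\lambda_0$ a weight of $\rV_\rL^\lambda$. Applying the classical convex-hull bound for weights of irreducibles, this places $\nu$ in the Minkowski sum $\Conv(\rW_\rL\mu) - \Conv(\rW_\rL\lambda) = \Conv(\rW_\rL\mu - \rW_\rL\lambda)$. However, the target $\Conv(\mu - \rW_\rL\lambda) = \mu - \Conv(\rW_\rL\lambda)$ is strictly smaller (it is not $\rW_\rL$-symmetric around the origin), and this tighter enclosure cannot follow from the weight-theoretic bound alone: it requires genuine use of the dominance of $\nu$, not merely of its being a weight. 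This dominance refinement is the main obstacle.

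To overcome it, I would use Frobenius reciprocity to reformulate: $\nu$ occurs in $(\rV_\rL^\lambda)^\vee \otimes \rV_\rL^\mu$ if and only if $\rV_\rL^\mu$ is a summand of $\rV_\rL^\lambda \otimes \rV_\rL^\nu$, so in particular $\mu$ is a weight of the latter. A leading-term analysis of a highest-weight vector of weight $\mu$ in $\rV_\rL^\lambda \otimes \rV_\rL^\nu$ — writing it as $\sum w_i \otimes u_i$ with the $u_i$'s ranging over a weight basis of $\rV_\rL^\nu$, isolating the top piece $\rV_\rL^\lambda \otimes \langle v_\nu\rangle$, and exploiting that $B$ acts on $\langle v_\nu\rangle$ by the character $\nu$ — should force at least one expression with $u_i = v_\nu$, giving $\mu = \lambda' + \nu$ for some weight $\lambda' \in \Conv(\rW_\rL\lambda)$, equivalently $\nu \in \mu - \Conv(\rW_\rL\lambda)$. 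An alternative which may be cleaner for a careful write-up is to extract the same conclusion from the Brauer--Klimyk expansion of $\chi_{-w_0^\rL\lambda} \cdot \chi_\mu$, keeping track of the Weyl dot-action $\sigma \cdot \beta = \sigma(\beta+\rho)-\rho$ and using $\rW_\rL$-invariance of $\Conv(\rW_\rL\lambda)$ to absorb the $\rho$-shift; either route establishes the desired enclosure and completes the proof.
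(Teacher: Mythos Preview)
The paper does not prove this lemma itself; it is quoted directly from~\cite[Lemma~2.8 and Lemma~2.9]{KuPo}, so there is no in-paper argument to compare your proposal against.

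Your outline is correct. Part~(b) is exactly Schur's lemma. For part~(a), your diagnosis is accurate: the naive weight bound yields only $\Conv(\rW_\rL\mu) - \Conv(\rW_\rL\lambda)$, and one genuinely needs dominance of~$\nu$ (not merely its being a weight) to obtain the sharper enclosure $\mu - \Conv(\rW_\rL\lambda)$. The leading-term argument you sketch does work and can be made precise as follows: write a highest-weight vector of weight~$\mu$ in $\rV_\rL^\lambda \otimes \rV_\rL^\nu$ as $\sum_\omega x_\omega$ with $x_\omega \in \rV_\rL^\lambda \otimes (\rV_\rL^\nu)_\omega$, and let~$\omega_0$ be maximal (in any total refinement of the root order) with $x_{\omega_0} \ne 0$. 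Annihilation by each~$e_i$ forces $(1\otimes e_i)\,x_{\omega_0}=0$ for all~$i$; since the vectors in~$\rV_\rL^\nu$ killed by every~$e_i$ span the highest-weight line, this gives $\omega_0=\nu$, and hence $\mu-\nu$ is a weight of~$\rV_\rL^\lambda$, so lies in $\Conv(\rW_\rL\lambda)$.

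One caution: the Brauer--Klimyk alternative is less direct than you indicate. That expansion yields $\nu = \sigma\cdot(\mu - \lambda')$ for some weight~$\lambda'$ of~$\rV_\rL^\lambda$ and some $\sigma\in\rW_\rL$, whence $\mu-\nu = (1-\sigma)(\mu+\rho) + \sigma(\lambda')$; the term $(1-\sigma)(\mu+\rho)$ is not absorbed into $\Conv(\rW_\rL\lambda)$ by $\rW_\rL$-invariance alone. The leading-term route is the clean one here.
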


The most efficient way to compute cohomology is provided by the Borel--Bott--Weil theorem.
Let $\ell \colon \rW \to \bZ$ be the length function
and let $\rho \in \bfP_\rG^+$ be the sum of fundamental weights of~$\rG$.

\begin{theorem}[Borel--Bott--Weil]
Let $\lambda \in \bfP_\rL^+$.
If the weight $\lambda + \rho$ lies on a wall of a Weyl chamber for the $\rW$-action, then
\begin{equation*}
H^\bullet(\rG/\rP,\cU^\lambda) = 0.
\end{equation*}
Otherwise, if $w \in \rW$ is the unique element such that the weight $w(\lambda + \rho)$ is dominant, then
\begin{equation*}
H^\bullet(\rG/\rP,\cU^\lambda) = \rV_\rG^{w(\lambda + \rho) - \rho}[-\ell(w)].
\end{equation*}
\end{theorem}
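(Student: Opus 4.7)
The plan is to reduce first from $\rG/\rP$ to the full flag variety $\rG/\rB$, and then prove the Borel--Bott--Weil theorem for $\rG/\rB$ by induction on the length of the Weyl group element making $\lambda+\rho$ dominant, via an analysis of the $\bP^1$-fibrations associated with simple roots.

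\emph{Step 1: reduction to $\rG/\rB$.} Let $\pi \colon \rG/\rB \to \rG/\rP$ be the canonical projection, whose fibers are isomorphic to the flag variety $\rL/\rB_\rL$ of the Levi $\rL$. For $\lambda \in \bfP_\rL^+$ the line bundle $\cL_\lambda$ on $\rG/\rB$ restricts on each fiber to the line bundle on $\rL/\rB_\rL$ associated to the $\rL$-dominant weight $\lambda$. The classical Borel--Weil theorem for $\rL$ (itself a special case of the statement) then gives $R^{>0}\pi_* \cL_\lambda = 0$ and $\pi_*\cL_\lambda \cong \cU^\lambda$, so that the Leray spectral sequence collapses and yields a canonical identification $H^\bullet(\rG/\rP,\cU^\lambda) \cong H^\bullet(\rG/\rB,\cL_\lambda)$. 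Thus it suffices to prove the theorem for $\rG/\rB$ (proceeding by induction on semisimple rank to handle the appeal to BBW for $\rL$).

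\emph{Step 2: the building block.} For each simple root $\alpha$, the fibration $\pi_\alpha \colon \rG/\rB \to \rG/\rP_\alpha$ is a Zariski locally trivial $\bP^1$-bundle, and the restriction of $\cL_\lambda$ to any fiber is $\cO_{\bP^1}(\langle \lambda, \alpha^\vee\rangle)$. Setting $n_\alpha := \langle \lambda,\alpha^\vee\rangle$, one obtains three cases by computing $R\pi_{\alpha*}\cL_\lambda$ fiberwise and using that $s_\alpha\cdot\lambda = \lambda - (n_\alpha+1)\alpha$:
\begin{itemize}
\item if $n_\alpha \ge 0$, then $R^{>0}\pi_{\alpha *}\cL_\lambda = 0$, so $H^\bullet(\rG/\rB,\cL_\lambda) \cong H^\bullet(\rG/\rP_\alpha,\pi_{\alpha*}\cL_\lambda)$;
\item if $n_\alpha = -1$, then $R\pi_{\alpha*}\cL_\lambda = 0$, so $H^\bullet(\rG/\rB,\cL_\lambda) = 0$;
\item if $n_\alpha \le -2$, relative Serre duality along $\pi_\alpha$ combined with the projection formula yields a canonical shift-and-twist isomorphism
\begin{equation*}
H^\bullet(\rG/\rB,\cL_\lambda) \cong H^{\bullet - 1}(\rG/\rB,\cL_{s_\alpha\cdot \lambda}).
\end{equation*}
\end{itemize}

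\emph{Step 3: induction on $\ell(w)$.} If $\lambda + \rho$ is regular, let $w$ be the unique element with $w(\lambda+\rho)$ dominant, and proceed by induction on $\ell(w)$. The base case $\ell(w)=0$ says $\lambda$ is dominant and is the classical Borel--Weil theorem $H^\bullet(\rG/\rB,\cL_\lambda) \cong \rV_\rG^\lambda$, which can be obtained directly by describing global sections. For the inductive step, choose a simple root $\alpha$ with $\ell(ws_\alpha) = \ell(w)-1$; such an $\alpha$ exists and satisfies $w\alpha < 0$, which translates to $n_\alpha = \langle \lambda,\alpha^\vee\rangle \le -2$ (because otherwise $\lambda+\rho$ would already be on the same side of the wall $\alpha^\perp$ as the dominant chamber). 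Then the third bullet above reduces the problem to $\mu := s_\alpha\cdot\lambda$, for which the corresponding Weyl element is $ws_\alpha$ with strictly smaller length, and the inductive hypothesis completes the computation because $(s_\alpha\cdot\lambda)+\rho = s_\alpha(\lambda+\rho)$ gives $w(\lambda+\rho)-\rho = (ws_\alpha)(s_\alpha\cdot\lambda+\rho)-\rho$. If instead $\lambda+\rho$ lies on a wall, one applies the same case-3 reductions to bring it to a weight whose $\alpha$-pairing is $-1$ for some simple $\alpha$, at which point the second bullet forces all cohomology to vanish.

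The main obstacle is the bookkeeping in Step 3: one must verify that a simple root $\alpha$ with $n_\alpha \le -2$ can always be chosen consistently with decreasing $\ell(w)$, and check that in the singular case some sequence of case-3 moves genuinely lands on a simple wall rather than an oblique one. Both amount to standard Weyl group combinatorics: any non-dominant $\lambda+\rho$ has negative pairing with some simple coroot, and singular weights are stabilized by a reflection conjugate to a simple one, so a finite sequence of dot-actions by simple reflections places the weight on a simple wall while monotonically decreasing the distance to the dominant chamber.
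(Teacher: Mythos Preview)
Your outline is a correct sketch of Demazure's classical proof of the Borel--Bott--Weil theorem: reduce to the full flag variety via the Leray spectral sequence for $\rG/\rB \to \rG/\rP$ (using Borel--Weil for the Levi fibers), then induct on the length of the Weyl element needed to bring $\lambda+\rho$ into the dominant chamber by means of the $\bP^1$-fibrations $\rG/\rB \to \rG/\rP_\alpha$ and relative Serre duality. The bookkeeping in Step~3 works as you indicate; one clean way to organize the singular case is to take $w$ of minimal length with $w(\lambda+\rho)$ in the \emph{closed} dominant chamber and note that at the final step either $\lambda+\rho$ is strictly dominant (Borel--Weil) or $\langle \lambda+\rho,\alpha^\vee\rangle = 0$ for some simple $\alpha$, i.e.\ $n_\alpha = -1$, forcing vanishing.

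However, there is nothing to compare here: the paper does not prove this theorem at all. It is stated as a classical result (with no proof environment following it) and is simply used as a computational tool in the subsequent lemmas. So your proposal is not an alternative to the paper's argument; it is a proof of a background fact that the paper takes for granted.
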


The Weyl group of $\Spin(2n)$ is the semidirect product
\begin{equation*}
\rW = \fS_n \ltimes (\bZ/2)^{n-1},
\end{equation*}
where $\fS_n$ acts on $\bfP_{\Spin(2n)} {} \subset \bQ^n$ by permutations
and $(\bZ/2)^{n-1}$ by changes of signs of even number of coordinates.
The walls of the Weyl chambers are given by the hyperplanes
\begin{equation}
\label{eq:walls-dn}
\lambda_i = \pm \lambda_j,\qquad
1 \le i \ne j \le n.
\end{equation}
The Weyl group of $\rL$ is the subgroup
\begin{equation*}
\rW^\rL = \fS_2 \times (\fS_{n-2} \ltimes (\bZ/2)^{n-3}),
\end{equation*}
where $\fS_2$ acts by transposition of the first two coordinates, $\fS_{n-2}$ by permutations of the last $n-2$,
and $(\bZ/2)^{n-3}$ by changes of signs of even number of the last $n-2$ coordinates.
The longest element $w_0^\rL \in \rW^\rL$ acts by
\begin{equation}
\label{eq:w0l-type-d}
w_0^\rL(\lambda_1,\lambda_2,\lambda_3,\dots,\lambda_{n-1},\lambda_n) = (\lambda_2,\lambda_1,-\lambda_3,\dots,-\lambda_{n-1},-(-1)^n\lambda_n).
\end{equation}
Finally, the sum $\rho$ of the fundamental weights in the standard basis takes the form
\begin{equation}
\label{eq:rho-dn}
\rho = (n - 1, n - 2, \dots, 1, 0).
\end{equation}

Applying the general machinery in our situation we obtain the following corollaries.
First, combining Lemma~\ref{lemma:duals-general} with~\eqref{eq:dn-weights}, \eqref{eq:dn-o1}, and~\eqref{eq:w0l-type-d}, we obtain

\begin{corollary}
\label{corollary:duals}
For $a \geq 0$ we have
\begin{equation*}
\big( \cU^{a \omega_1} \big)^\vee = \cU^{a \omega_1}(-a).
\end{equation*}
For $a \geq 0$ and even $n$ we have
\begin{equation*}
\big(\cU^{a\omega_{n-1}} \big)^\vee = \cU^{a\omega_{n-1}}(-a) \qquad \text{and} \qquad \big( \cU^{a\omega_{n}} \big)^\vee = \cU^{a\omega_{n}}(-a).
\end{equation*}
For $a \geq 0$ and odd $n$ we have
\begin{equation*}
\big( \cU^{a\omega_{n-1}} \big)^\vee = \cU^{a\omega_{n}}(-a) \qquad \text{and} \qquad \big( \cU^{a\omega_{n}} \big)^\vee = \cU^{a\omega_{n-1}}(-a).
\end{equation*}
\end{corollary}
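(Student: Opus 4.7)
The plan is to apply Lemma~\ref{lemma:duals-general}, which gives $(\cU^\lambda)^\vee \cong \cU^{-w_0^\rL \lambda}$, and then to check case by case that $-w_0^\rL(a\omega_1)$, $-w_0^\rL(a\omega_{n-1})$, and $-w_0^\rL(a\omega_n)$ equal the weights appearing on the right-hand side of each claimed identity. All the needed ingredients are already collected in the excerpt: the formulas~\eqref{eq:dn-weights} expressing the fundamental weights of $\Spin(2n)$ in the standard basis $\{\epsilon_i\}$, the explicit action~\eqref{eq:w0l-type-d} of the longest element $w_0^\rL$ of $\rW^\rL$, and the normalization $\cO(1) = \cU^{\omega_2}$ from~\eqref{eq:dn-o1}.

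Concretely, I would first translate the claimed identities into weight identities. For the line bundle $\cO(-a) = \cU^{-a\omega_2}$ the corresponding weight in the standard basis is $(-a,-a,0,\dots,0)$, so the three identities to verify become
\begin{align*}
-w_0^\rL(a,0,0,\dots,0,0) &= (a,0,0,\dots,0,0) + (-a,-a,0,\dots,0),\\
-w_0^\rL(\tfrac{a}{2},\dots,\tfrac{a}{2},-\tfrac{a}{2}) &= (\text{appropriate target}) + (-a,-a,0,\dots,0),\\
-w_0^\rL(\tfrac{a}{2},\dots,\tfrac{a}{2},\hphantom{-}\tfrac{a}{2}) &= (\text{appropriate target}) + (-a,-a,0,\dots,0),
\end{align*}
where the two ``appropriate targets'' depend on the parity of $n$ exactly as predicted by the statement.

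The verification itself is then a direct substitution into~\eqref{eq:w0l-type-d}. For $a\omega_1$, applying $-w_0^\rL$ yields $(0,-a,0,\dots,0)$, which coincides with $a\omega_1 - a\omega_2$; this case is parity independent. For $a\omega_n$, one gets
\begin{equation*}
-w_0^\rL(\tfrac{a}{2},\tfrac{a}{2},\tfrac{a}{2},\dots,\tfrac{a}{2},\tfrac{a}{2}) = (-\tfrac{a}{2},-\tfrac{a}{2},\tfrac{a}{2},\dots,\tfrac{a}{2},(-1)^n\tfrac{a}{2}),
\end{equation*}
and the sign $(-1)^n$ on the last coordinate is exactly what distinguishes $a\omega_n - a\omega_2$ (for even $n$) from $a\omega_{n-1} - a\omega_2$ (for odd $n$). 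The case of $a\omega_{n-1}$ is entirely analogous. The only subtlety worth highlighting is the parity-dependent sign coming from the last entry of $w_0^\rL$ in~\eqref{eq:w0l-type-d}; this is what makes the spinor bundles swap under dualization (up to twist) when $n$ is odd and stay fixed when $n$ is even.

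There is no serious obstacle; the proof is a one-line computation for each weight, granted~\eqref{eq:dn-weights} and~\eqref{eq:w0l-type-d}. If I were writing it out in full, I would present the three cases in a single short display, emphasize the role of the parity of $n$ in the spinor case, and note that the identities are consistent with degree considerations via $\det$ (each weight on the right-hand side differs from the dominant lift on the left by $-a\omega_2$, matching the appearance of the twist $(-a)$).
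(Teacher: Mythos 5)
Your proposal is correct and follows exactly the paper's own argument: apply Lemma~\ref{lemma:duals-general} together with the explicit formula~\eqref{eq:w0l-type-d} for $w_0^\rL$ and the expressions~\eqref{eq:dn-weights} for the fundamental weights, with the parity-dependent sign on the last coordinate accounting for the swap of spinor bundles when $n$ is odd. Nothing is missing.
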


\begin{proof}
By~\eqref{eq:dn-weights} we have $a\omega_1 = (a,0,0,\dots,0)$.
By Lemma~\ref{lemma:duals-general} and~\eqref{eq:w0l-type-d} the weight corresponding to~$\big( \cU^{a \omega_1} \big)^\vee$
is~$(0,-a,0,\dots,0)$, by~\eqref{eq:dn-weights} this is equal to~$a\omega_1 - a\omega_2$, hence the claim.
The other results are proved analogously.
\end{proof}

Similarly, using Lemma~\ref{lemma:tp-general} we deduce the following tensor product decompositions.

\begin{lemma}
\label{lemma-tensor-products}
We have isomorphisms
\begingroup\allowdisplaybreaks
\begin{align}
\label{tp:a1*b1}
\cU^{k \omega_1} \otimes \cU^{l \omega_1}
&\cong \bigoplus_{i = 0}^{\min\{k,l\}} \cU^{(k+l-2i) \omega_1} (i)
&& \text{for $k,l \ge 0$,}
\\
\label{tp:a1-la}
\cU^{k\omega_1} \otimes \cU^\lambda
&\cong \cU^{k\omega_1 + \lambda}
&& \text{if $\lambda = \sum_{i=2}^n \lambda_i \omega_i$,}
\\
\label{eq:wedge-n-2}
\Lambda^{n-2}(\cU^{\omega_3 - \omega_2})
&\cong
\cU^{2\omega_{n-1}}(-1) \oplus \cU^{2\omega_{n}}(-1).
\end{align}
\endgroup
\end{lemma}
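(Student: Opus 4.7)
The plan is to apply Lemma~\ref{lemma:tp-general}, reducing each identity to a decomposition of $\rL$-representations, where the Levi $\rL$ is the double cover of $\GL(2) \times \SO(2(n-2))$ from~\eqref{eq:levi-covering}. Using the $\epsilon_i$-coordinates from~\eqref{eq:dn-weights}, a dominant weight $\lambda = (\lambda_1,\ldots,\lambda_n) \in \bfP_\rL^+$ corresponds to the external tensor product of the $\GL(2)$-irreducible with highest weight $(\lambda_1,\lambda_2)$ and the $\SO(2(n-2))$-irreducible with highest weight $(\lambda_3,\ldots,\lambda_n)$. Under this correspondence $\cU^{k\omega_1}$ corresponds to $S^k W \boxtimes \mathbf{1}$, where $W$ denotes the two-dimensional $\GL(2)$-representation associated with $\cU^{\omega_1}$, and $\cO(1) = \cU^{\omega_2}$ corresponds to $\det W \boxtimes \mathbf{1}$.

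Formula~\eqref{tp:a1*b1} then reduces to the classical $\GL(2)$ Clebsch--Gordan decomposition
\begin{equation*}
S^k W \otimes S^l W \cong \bigoplus_{i=0}^{\min\{k,l\}} S^{k+l-2i} W \otimes (\det W)^i,
\end{equation*}
translated back through the correspondence above.

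For formula~\eqref{tp:a1-la}, using~\eqref{eq:dn-weights} one observes that any $\lambda = \sum_{i\ge 2} \lambda_i \omega_i$ has its first two $\epsilon$-coordinates equal, say $\lambda_1 = \lambda_2 = s$. The $\GL(2)$-factor of $\cU^\lambda$ is then the one-dimensional character $(\det W)^s$, so its tensor product with $S^k W$ is the irreducible $\GL(2)$-representation of highest weight $(k+s,s)$ without decomposing; the $\SO(2(n-2))$-factor is unaffected. Hence $\cU^{k\omega_1} \otimes \cU^\lambda$ is irreducible with highest weight $k\omega_1+\lambda$, as claimed.

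Finally, for formula~\eqref{eq:wedge-n-2}, note that $\omega_3-\omega_2 = \epsilon_3$, so under the Levi decomposition $\cU^{\omega_3-\omega_2}$ corresponds to the standard $2(n-2)$-dimensional representation $V'$ of the $\SO(2(n-2))$-factor. The classical decomposition of the middle exterior power of the standard representation of $\SO(2m)$ into the $\pm 1$ eigenspaces of the Hodge star gives, with $m = n-2$,
\begin{equation*}
\Lambda^{n-2} V' \cong W_+ \oplus W_-,
\end{equation*}
where $W_\pm$ are irreducible with highest weights $\epsilon_3 + \dots + \epsilon_{n-1} \pm \epsilon_n$. A direct translation using~\eqref{eq:dn-weights} identifies these weights with $2\omega_n - \omega_2$ and $2\omega_{n-1} - \omega_2$ respectively, yielding~\eqref{eq:wedge-n-2}. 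The main obstacle is essentially bookkeeping: keeping straight the change between $\epsilon$-coordinates and fundamental weights, and matching the half-spin fundamental weights of the $\SO(2(n-2))$-factor of $\rL$ with $\omega_{n-1}, \omega_n$ of $\rG$; once this is done, each identity reduces to a standard representation-theoretic fact.
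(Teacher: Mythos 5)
Your proposal is correct and follows essentially the same route as the paper: reduce each identity to a decomposition of Levi representations via the monoidal equivalence, then use the $\GL(2)$ Clebsch--Gordan rule for the first formula, the observation that the $\GL(2)$-part of $\cU^\lambda$ is a character for the second, and the decomposition of the middle exterior power of the standard $\SO(2(n-2))$-representation into the two irreducibles of highest weights $2\omega_{n-1}'$, $2\omega_n'$ for the third. The only cosmetic difference is that you work directly in $\epsilon$-coordinates where the paper introduces the double cover $\tGL(2)\times\Spin(2(n-2))\to\rL$ and the primed fundamental weights to handle the half-integral weights cleanly, a bookkeeping point you correctly flag.
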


\begin{proof}
First, we note that the bundles $\cU^{k\omega_1}$ and~$\cU^{l\omega_1}$
correspond to representations of $\rL$ pulled back from representations $\rV_{\GL_2}^{k,0}$ and $\rV_{\GL_2}^{l,0}$
via the map~\eqref{eq:levi-covering}, so~\eqref{tp:a1*b1} follows from the Clebsch--Gordan rule
\begin{equation*}
\rV_{\GL_2}^{k_1,k_2} \otimes \rV_{\GL_2}^{l_1,l_2} =
\bigoplus_{j = 0}^{\min\{k_1 - k_2,l_1 - l_2\}} \rV_{\GL_2}^{k_1 + l_1 - j, k_2 + l_2 + j}.
\end{equation*}

For~\eqref{tp:a1-la} and~\eqref{eq:wedge-n-2} we need a slightly more detailed understanding of the Levi group~$\rL$.
Let $\tGL(2) \to \GL(2)$ be the unique connected double covering.
Then there is a double covering
\begin{equation}
\label{eq:levi-covering-2}
\tGL(2) \times \Spin(2(n-2)) \to \rL.
\end{equation}
To understand tensor product of representations of~$\rL$ it is enough to understand tensor product
of their pullbacks via the map~\eqref{eq:levi-covering-2}.

We will denote by $\omega'_1$, $\omega'_2$ the fundamental weights of $\tGL(2)$
and by $\omega'_i$, $3 \le i \le n$, the fundamental weights of $\Spin(2(n-2))$.
The double covering~\eqref{eq:levi-covering-2} induces an embedding of weight lattices
\begin{equation*}
 \bfP_\rL \hookrightarrow \bfP_{\tGL(2)} \oplus \bfP_{\Spin(2(n-2))}
\end{equation*}
and identifies the corresponding~$\bQ$-vector spaces.
Thereofre, the weights~$\omega'_i$
can be thought of as rational weights for $\rL$, i.e., as rational weights of $\Spin(2n)$.
It is easy to see that they can be expressed in the basis of fundamental weights $\omega_i$ as follows:
\begin{equation}
\label{eq:omega-prime}
\omega'_1 = \omega_1,\quad
\omega'_2 = \tfrac12\omega_2,\qquad
\omega'_i =
\begin{cases}
\omega_i - \omega_2, & \text{if $3 \le i \le n-2$,}\\
\omega_i - \tfrac12\omega_2, & \text{if $i \in \{n-1,n\}$.}
\end{cases}
\end{equation}
By~\eqref{eq:omega-prime} the representation $\rV_\rL^{k\omega_1}$ pulls back to~$\rV_{\tGL_2}^{k,0}$,
while $\rV_\rL^\lambda$ with $\lambda = \sum_{i=2}^n \lambda_i \omega_i$
pulls back to~$\rV_{\tGL(2)}^{c(\lambda),c(\lambda)} \otimes \rV_{\Spin(2(n-2))}^{\lambda'}$,
where $\lambda' = \sum_{i=3}^n \lambda_i \omega'_i$ and~$c(\lambda) = 2\sum_{i=2}^{n-2}\lambda_i + \lambda_{n-1} + \lambda_n$.
Clearly,
\begin{equation*}
\rV_{\tGL_2}^{k,0} \otimes \Big( \rV_{\tGL(2)}^{c(\lambda),c(\lambda)} \otimes V_{\Spin(2(n-2))}^{\lambda'} \Big) \cong
\rV_{\tGL(2)}^{k + c(\lambda),c(\lambda)} \otimes \rV_{\Spin(2(n-2))}^{\lambda'},
\end{equation*}
and the right side corresponds to $\rV_\rL^{k\omega_1 + \lambda}$.
This proves~\eqref{tp:a1-la}.

Similarly, $\rV_\rL^{\omega_3 - \omega_2}$ corresponds to $\rV_{\Spin(2(n-2))}^{\omega'_3}$
(which, according to our conventions, is the first fundamental representation of~$\Spin(2(n-2))$).
By~\cite[Table~5]{VO} its middle exterior power
is isomorphic to the direct sum of two irreducible representations with highest weights $2\omega'_n$ and $2\omega'_{n-1}$.
By~\eqref{eq:omega-prime} these weights correspond to the weights $2\omega_n - \omega_2$ and $2\omega_{n-1} - \omega_2$, and the claim follows.
This proves~\eqref{eq:wedge-n-2}.
\end{proof}

Finally, we deduce from Borel--Bott--Weil a vanishing lemma,
on which most of semiorthogonality results for~$\OG(2,2n)$ in the next section rely.

\begin{lemma}
\label{lemma:acyclicity-new}
Let $\lambda = (\lambda_1,\dots,\lambda_n)$ be a dominant weight of $\rL$.
Assume that one of the following two conditions holds
\begin{align}
\label{acyclicity:1}
|\lambda_i + n - i| & < n - 1	&\text{for all $i$}, \\
\intertext{or}
\label{acyclicity:2}
|\lambda_i + n - i| & < n - 2 	&\text{for all but one $i$}.
\end{align}
Then $\cU^\lambda$ is acyclic.
\end{lemma}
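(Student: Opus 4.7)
The plan is to deduce acyclicity directly from the Borel--Bott--Weil theorem via a pigeonhole argument on the coordinates of $\lambda + \rho$.

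Recall from~\eqref{eq:rho-dn} that $\rho = (n-1, n-2, \dots, 1, 0)$, so if we set
\begin{equation*}
\mu_i := \lambda_i + n - i, \qquad 1 \le i \le n,
\end{equation*}
then the coordinates of $\lambda + \rho$ are exactly the $\mu_i$. By the Borel--Bott--Weil theorem, $\cU^\lambda$ is acyclic as soon as $\lambda + \rho$ lies on a wall of a Weyl chamber; by~\eqref{eq:walls-dn} this means that there exist $i \ne j$ with $\mu_i = \pm \mu_j$, or equivalently that two of the $\mu_i$'s coincide in absolute value. So it suffices to prove, under either hypothesis \eqref{acyclicity:1} or~\eqref{acyclicity:2}, that $|\mu_1|, \dots, |\mu_n|$ are not pairwise distinct.

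For the counting we separate two cases according to the type of weight. Since $\omega_1, \dots, \omega_{n-2}$ are integral while $\omega_{n-1}, \omega_n$ are half-integral (by~\eqref{eq:dn-weights}), the dominant weight $\lambda$ either has all coordinates in $\bZ$ or all coordinates in $\bZ + \tfrac12$; in each case $\mu_i$ belongs to the same set since $n-i \in \bZ$.

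Assume now~\eqref{acyclicity:1}, i.e.\ $|\mu_i| < n - 1$ for every $i$. Then each $|\mu_i|$ lies in the set $\{0, 1, \dots, n-2\}$ of cardinality $n-1$ (integer case) or in $\{\tfrac12, \tfrac32, \dots, \tfrac{2n-3}{2}\}$ of cardinality $n-1$ (half-integer case). In either event we have $n$ numbers taking values in a set of size $n-1$, so by pigeonhole two of the $|\mu_i|$'s coincide and we are done. Under hypothesis~\eqref{acyclicity:2}, the same argument applied to the $n-1$ indices $i$ for which $|\mu_i| < n-2$ holds places $n-1$ absolute values in a set of size $n-2$ (either $\{0, 1, \dots, n-3\}$ or $\{\tfrac12, \dots, \tfrac{2n-5}{2}\}$), and pigeonhole again forces two coincidences. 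In both cases $\lambda + \rho$ lies on a wall, and Borel--Bott--Weil yields the desired vanishing. The argument is entirely combinatorial, and there is no real obstacle; the only mild subtlety is remembering to treat the half-integral weight lattice of $\Spin(2n)$ separately, which is why the bound reads~$n-1$ (resp.\ $n-2$) rather than~$n$ (resp.\ $n-1$).
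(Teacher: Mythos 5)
Your proof is correct and follows essentially the same route as the paper's: reduce via Borel--Bott--Weil to showing that $\lambda+\rho$ lies on one of the walls~\eqref{eq:walls-dn}, then apply pigeonhole to the absolute values of its coordinates. Your explicit separation of the integral and half-integral weight lattices is a detail the paper leaves implicit, but it is needed for the pigeonhole count and the argument is otherwise identical.
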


\begin{proof}
These are special cases of~\cite[Lemma~5.2]{KuPo}; we provide a proof for completeness.

In view of~\eqref{eq:rho-dn}, if~\eqref{acyclicity:1} holds,
then all coordinates of $\lambda + \rho$ have absolute values less than~\mbox{$n - 1$}.
Therefore, at least two of them have the same absolute values.
Similarly, if~\eqref{acyclicity:2} holds, then all but one coordinates of $\lambda + \rho$ have absolute values less than~\mbox{$n - 2$}.
Therefore, again at least two of them have the same absolute values.
In both cases we see that~\mbox{$\lambda + \rho$} lies on one of the walls~\eqref{eq:walls-dn},
and by Borel--Bott--Weil theorem we conclude that~$\cU^\lambda$ is acyclic.
\end{proof}

\subsection{Exceptional collection}
\label{subsection:exceptional-dn}

In this section we prove that~\eqref{eq-collection-OG-even-I} is an exceptional collection. Recall from~\eqref{eq:bundles-dn} that $\cU$ denotes the dual bundle of $\cU^{\omega_1}$. Consequently, $S^k\cU^\vee \cong \cU^{k\omega_1}$.

First, we discuss the ``tautological'' part of the collection. Recall that an exceptional collection $E_1,\dots,E_k$ of vector bundles is {\sf strong},
if $\Ext^{>0}(E_i,E_j) = 0$ for all $1 \le i,j \le k$.

\begin{lemma}
\label{lemma:so1}
The collection of vector bundles $\cO,\cU^\vee,\dots,S^{n-2}\cU^\vee$ is a strong exceptional collection. Moreover, for $0 \le k, l \le n-2$ and $1 \le t \le 2n - 4$ we have
\begin{equation*}
\Ext^\bullet(S^k\cU^\vee(t), S^l\cU^\vee) =
\begin{cases}
\Bbbk[4 - 2n], 	& \text{if $k = l = n - 2$, $t = n - 2$,}\\
\Bbbk[3 - 2n], 	& \text{if $k = l = n - 2$, $t = n - 1$,}\\
0,		& \text{otherwise.}
\end{cases}
\end{equation*}
\end{lemma}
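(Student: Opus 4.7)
The plan is to rewrite each $\Ext$-group as the cohomology of a direct sum of $\Spin(2n)$-equivariant vector bundles and analyze each summand via Borel--Bott--Weil. Using $S^k\cU^\vee \cong \cU^{k\omega_1}$, the duality identity $(\cU^{k\omega_1})^\vee \cong \cU^{k\omega_1}(-k)$ of Corollary~\ref{corollary:duals}, and the Clebsch--Gordan formula~\eqref{tp:a1*b1} of Lemma~\ref{lemma-tensor-products}, I obtain
\begin{equation*}
\Ext^\bullet(S^k\cU^\vee(t),\, S^l\cU^\vee) \;\cong\; \bigoplus_{i=0}^{\min(k,l)} H^\bullet\big(\OG(2,2n),\, \cU^{(k+l-2i)\omega_1}(i-k-t)\big).
\end{equation*}
The $i$-th summand corresponds to the weight $\lambda = m\omega_1 + s\omega_2$ with $m = k+l-2i$ and $s = i-k-t$, so in the standard basis
\begin{equation*}
\lambda + \rho = \big(m+s+n-1,\ s+n-2,\ n-3,\ n-4,\ \dots,\ 1,\ 0\big).
\end{equation*}

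The last $n-2$ entries of $\lambda + \rho$ have the distinct absolute values $0, 1, \dots, n-3$. Using the wall description~\eqref{eq:walls-dn}, Borel--Bott--Weil forces acyclicity of the $i$-th summand unless
\begin{equation*}
|m+s+n-1| \geq n-2, \qquad |s+n-2| \geq n-2, \qquad |m+s+n-1| \neq |s+n-2|.
\end{equation*}
Under the constraints $0 \le k, l \le n-2$, $0 \le t \le 2n-4$, $0 \le i \le \min(k,l)$, a case analysis shows: for $t = 0$ the only surviving summand is $i = k \le l$, which yields the bundle $\cU^{(l-k)\omega_1}$ with cohomology $\rV_\rG^{(l-k)\omega_1}$ concentrated in degree $0$ (this gives $\Hom = \Bbbk$ for $k=l$ and confirms the vanishing of positive $\Ext$-groups for $k<l$, establishing the strong exceptional property); for $t \ge 1$ the inequalities force $k = l = n-2$, $i = 0$, and $t \in \{n-2, n-1\}$, producing exactly the two special non-vanishing cases.

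For the two non-vanishing cases I compute the cohomological degree as the length $\ell(w)$ of the unique element $w \in \rW$ carrying $\lambda+\rho$ into the dominant chamber. In both cases the dominant representative turns out to be $\rho$ itself (so the resulting $\rG$-representation is trivial), and $\ell(w)$ equals the number of positive roots $\alpha$ with $(\lambda+\rho, \alpha) < 0$. When $t = n-2$ one has $\lambda+\rho = (n-1,\, 2-n,\, n-3,\, \dots,\, 0)$, and precisely the $2(n-2)$ roots $\epsilon_2 \pm \epsilon_j$, $3 \le j \le n$, are negated, giving $\ell(w) = 2n-4$; when $t = n-1$ one has $\lambda+\rho = (n-2,\, 1-n,\, n-3,\, \dots,\, 0)$, and the extra root $\epsilon_1 + \epsilon_2$ is also negated, yielding $\ell(w) = 2n-3$. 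These match the stated shifts $[4-2n]$ and $[3-2n]$. The main bookkeeping obstacle is verifying that the near-miss configurations such as $(k,l,t,i) = (n-2,n-3,n-2,0)$ do land on a wall; in that example $|m+s+n-1| = |s+n-2| = n-2$, so the first two coordinates clash and the summand is acyclic.
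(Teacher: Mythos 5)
Your proof is correct and follows essentially the same route as the paper: the Clebsch--Gordan decomposition of $(S^k\cU^\vee(t))^\vee\otimes S^l\cU^\vee$ into bundles $\cU^\nu$ followed by a Borel--Bott--Weil wall analysis, with the two surviving summands identified and their cohomological degrees computed. The only (immaterial) differences are that the paper first halves the range of $t$ by Serre duality and packages the wall conditions into its acyclicity lemma, whereas you treat the full range $0\le t\le 2n-4$ directly and obtain $\ell(w)$ by counting inverted positive roots instead of exhibiting a reduced word.
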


\begin{proof}
We will show that $\Ext^\bullet(S^k\cU^\vee(t),S^l\cU^\vee)$ is zero for $0 \le k,l \le n-2$ and $0 \le t \le 2n-4$,
except for $t = 0$, $k \le l$ and $t \in \{n - 2, n - 1\}$, $k = l = n-2$.
By~\eqref{eq:omega-dn} and Serre duality we can assume $0 \le t \le n - 2$.

By Corollary~\ref{corollary:duals} and~\eqref{tp:a1*b1} we have
\begin{multline*}
(S^k\cU^\vee(t))^\vee \otimes S^l\cU^\vee \cong S^k\cU^\vee(-k-t) \otimes S^l\cU^\vee \\ \cong
\bigoplus_{j = 0}^{\min\{k,l\}} S^{k + l - 2j}\cU^\vee(j - k - t) =
\bigoplus_{j = 0}^{\min\{k,l\}} \cU^{(l - j - t,j - k - t,0,\dots,0)}.
\end{multline*}
So, it is enough to compute the cohomology of bundles $\cU^{\lambda}$
with $\lambda = (l - j - t,j - k - t,0,\dots,0)$, so that
\begin{equation*}
2 - n \le \lambda_1 \le n - 2,\quad
4 - 2n \le \lambda_2 \le 0,\qquad
\lambda_3 = \dots = \lambda_n = 0.
\end{equation*}
It is convenient to distinguish three cases:
\begin{description}
  \item[Case 1] If $4 - 2n < \lambda_2 < 0$, then $|\lambda_i + n - i| < n - 2$ for all $i \ge 2$.
  Hence,~\eqref{acyclicity:2} holds and~$\cU^\lambda$ is acyclic by Lemma~\ref{lemma:acyclicity-new}.

  \item[Case 2] If $\lambda_2 = 0$, then we necessarily have $t = 0$ and $j = k$.
  Further, since $j \le l$, we obtain~$k \le l$. In particular, the weight $\lambda$ is dominant, hence $H^{>0}(\OG(2,2n),\cU^\lambda) = 0$.
  Moreover, if $k = l$, then $\lambda = 0$ and in this case the cohomology of~$\cU^\lambda$ is isomorphic to $\Bbbk[0]$.
\end{description}
A combination of Case 1 and Case 2 proves that $\cO, \cU^\vee, \dots, S^{n-2}\cU^\vee$ is a strong exceptional collection.
\begin{description}
  \item[Case 3] If $\lambda_2 = 4 - 2n$, then we necessarily have $k = t = n - 2$ and $j = 0$.
  In this case we have $2 - n \le \lambda_1 \le 0$.

  If $\lambda_1 < 0$, then $|\lambda_i + n - i| < n - 1$ for all $i$. Hence~\eqref{acyclicity:1} holds and $\cU^\lambda$ is acyclic by Lemma~\ref{lemma:acyclicity-new}.

  If $\lambda_1 = 0$, then $l = n - 2$, and we have $\cU^\lambda = \cU^{(0,4-2n,0,\dots,0)}$.
  In this case the Borel--Bott--Weil theorem implies that the cohomology of $\cU^\lambda$ is isomorphic to $\Bbbk[4 - 2n]$.
  Indeed, $\lambda + \rho = (n-1,2-n,n-3,\dots,0) = w\rho$, where
  \begin{equation*}
  w = (\rs_2 \cdot \ldots \cdot \rs_{n-2}) \cdot \rs_{n-1} \cdot \rs_n \cdot (\rs_{n-2} \cdot \ldots \cdot \rs_2)
  \end{equation*}
  and $\rs_i$ are the simple reflections.
\end{description}
This completes the proof of the lemma.
\end{proof}

Next, we discuss the ``spinor'' part of the collection.

\begin{lemma}
\label{lemma:so2}
The collection of sheaves $\cU^{\omega_{n-1}}, \cU^{\omega_{n}}, \cU^{2\omega_{n-1}}, \cU^{2\omega_{n}},
\{ \cU^{\omega_{n-1}}(t), \cU^{\omega_{n}}(t) \}_{1 \le t \le 2n - 4}$
is exceptional.
\end{lemma}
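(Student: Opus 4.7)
The plan is to reduce every required $\Ext$-computation to the cohomology of an equivariant bundle on $\OG(2,2n)$ and then dispatch the latter via the acyclicity criterion of Lemma~\ref{lemma:acyclicity-new}. By Corollary~\ref{corollary:duals} we have $(\cU^{a\omega_i})^\vee \cong \cU^{a\omega_{i'}}(-a)$, where $i' = i$ for even~$n$ and $i' \in \{n-1,n\} \setminus \{i\}$ for odd~$n$; consequently, for any two bundles $E = \cU^{a\omega_i}$ and $F = \cU^{b\omega_j}$ from the collection (with $a,b \in \{1,2\}$, $i,j \in \{n-1,n\}$) and any twist $0 \le t \le 2n-4$,
\[
\Ext^\bullet\bigl(E(t),F\bigr) \cong H^\bullet\bigl(\OG(2,2n),\; \cU^{a\omega_{i'}} \otimes \cU^{b\omega_j} \otimes \cO(-a-t)\bigr).
\]

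The central computational step is to decompose $\cU^{a\omega_{i'}} \otimes \cU^{b\omega_j}$ as a direct sum of equivariant bundles $\cU^\nu$. Using the double covering~\eqref{eq:levi-covering-2} together with the coordinates~\eqref{eq:omega-prime}, one separates the two tensor factors: the $\tGL(2)$-side reduces to Clebsch--Gordan, while the $\Spin(2(n-2))$-side reduces to tensor products among the half-spin representations $\rV_{\Spin(2(n-2))}^{\omega'_{n-1}}$ and $\rV_{\Spin(2(n-2))}^{\omega'_n}$ (and their squares when $a = 2$ or $b = 2$), which by the classical Clifford-algebra formulas decompose into sums of exterior powers of the standard vector representation. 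The constraint provided by Lemma~\ref{lemma:prv} further pins down the list of~$\nu$, whose last $n-2$ coordinates are bounded in absolute value by a small constant ($\le 1$ for $a = b = 1$, $\le 2$ otherwise) and whose first two coordinates lie in a comparably small interval.

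Applying Lemma~\ref{lemma:acyclicity-new} to each resulting $\cU^\nu$ twisted by $\cO(-a-t)$, the bound on the last $n-2$ coordinates ensures that $|\nu_k + n - k| < n - 2$ for $k \ge 4$ and typically also for $k = 3$, so criterion~\eqref{acyclicity:2} reduces the acyclicity check to controlling the two expressions $|\nu_i - (a+t) + n - i|$ for $i = 1, 2$. Within the admissible range $1 \le a + t \le 2n - 3$ at least one of these falls strictly below~$n - 2$, producing the required vanishing. The diagonal identifications $\Ext^\bullet(E, E) \cong \Bbbk[0]$ and the semiorthogonality among the four base bundles $\cU^{\omega_{n-1}}, \cU^{\omega_n}, \cU^{2\omega_{n-1}}, \cU^{2\omega_n}$ at $t = 0$ follow from the last sentence of Lemma~\ref{lemma:prv}: the trivial summand $\cU^0 = \cO$ appears with multiplicity one precisely in $E^\vee \otimes E$ and with multiplicity zero in all other relevant products.

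The main obstacle I anticipate is the borderline casework: for extremal combinations of twist and summand (in particular $a + t$ near the endpoints $1$ or $2n - 3$, combined with summands $\cU^\nu$ whose entries are maximal in absolute value) the corresponding weight sits exactly on the edge of the acyclicity range of Lemma~\ref{lemma:acyclicity-new}, where that lemma gives only a non-strict inequality. For each such borderline summand I would exhibit directly an explicit reflection in $\rW = \fS_n \ltimes (\bZ/2)^{n-1}$ placing $\nu - (a+t)\omega_2 + \rho$ on one of the walls~\eqref{eq:walls-dn}, in the same spirit as the Weyl-group argument used for the boundary values $t = n-2, n-1$ in the proof of Lemma~\ref{lemma:so1}.
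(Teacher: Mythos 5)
Your proposal follows essentially the same route as the paper's proof: dualize, bound the summands $\cU^\nu$ of the relevant tensor product via Lemma~\ref{lemma:prv}, kill them with Lemma~\ref{lemma:acyclicity-new}, and reserve direct Borel--Bott--Weil wall arguments for borderline weights. Two remarks. First, the explicit decomposition of $\cU^{a\omega_{i'}}\otimes\cU^{b\omega_j}$ via Clebsch--Gordan and Clifford-algebra formulas is an unnecessary detour: the convex-hull bound of Lemma~\ref{lemma:prv}, applied directly to $\lambda = a\omega_i + t\omega_2$ and $\mu = b\omega_j$, already confines every $\nu$ to a small box (with $\nu_1=\nu_2=\tfrac{b-a}{2}-t$ and $|\nu_k|\le 2$ for $k\ge 3$), which is all the acyclicity lemma needs; the paper also halves the range of $t$ by Serre duality, which you skip and which would remove several of your extremal cases. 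Second, and more substantively, your treatment of the $t=0$ cases is incomplete as stated: the last sentence of Lemma~\ref{lemma:prv} only controls the \emph{trivial} summand, and you must still show that every non-trivial summand $\cU^\nu$ of $E^\vee\otimes F$ is acyclic. For $t=0$ and $a=b$ these summands have $\nu_1=\nu_2=0$, so $|\nu_1+n-1|=n-1$ and $|\nu_2+n-2|=n-2$, and \emph{both} criteria of Lemma~\ref{lemma:acyclicity-new} fail; one needs the direct argument that $\nu+\rho$ must lie on a wall~\eqref{eq:walls-dn} unless $\nu=0$ (the coordinates are forced to be $n-1,n-2,\dots,1,0$), which is exactly Case~3 of the paper's proof. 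Your closing paragraph anticipates such wall arguments, so the gap is reparable, but be aware that this is the main content of the $t=0$ computation rather than a peripheral borderline case.
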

\begin{proof}
We will compute $\Ext^\bullet(\cU^{a\omega_i}(t),\cU^{b\omega_j})$ where $i,j \in \{n - 1,n\}$, and
\begin{itemize}
\item
either \mbox{$a,b \in \{1,2\}$} and $(a,b) \ne (2,2)$, and~\mbox{$1 \le t \le 2n - 4$},
\item
or \mbox{$a,b \in \{1,2\}$} and $(a,b) \ne (1,2)$, and $t = 0$.
\end{itemize}
By~\eqref{eq:omega-dn} and Serre duality we can assume $0 \le t \le n - 2$.

We take $\lambda = a\omega_{i} + t\omega_2 = (t+\tfrac{a}2,t+\tfrac{a}2,\tfrac{a}2,\dots,\tfrac{a}2,\pm\tfrac{a}2)$
and $\mu = b\omega_{j} = (\tfrac{b}2,\tfrac{b}2,\tfrac{b}2,\dots,\tfrac{b}2,\pm \tfrac{b}2)$;
the signs of the last coordinate depend on whether~$i$ and~$j$ are~$n-1$ or~$n$.
Since the~$\rW_\rL$-orbit of~$\lambda$ consists of points $(t+\tfrac{a}2,t+\tfrac{a}2,\pm\tfrac{a}2,\dots,\pm\tfrac{a}2,\pm\tfrac{a}2)$
(with the parity of the number of negative signs depending on~$i$),
by Lemma~\ref{lemma:prv} we have
\begin{equation}
\label{eq:ulm}
(\cU^\lambda)^\vee \otimes \cU^\mu = \bigoplus_{\nu} (\cU^\nu)^{\oplus m(\lambda,\mu,\nu)},
\end{equation}
where $\nu$ runs over the integral or half-integral points satisfying
\begin{equation*}
\nu_1 = \nu_2 = \tfrac{b-a}2 - t,
\qquad \nu_3,\dots,\nu_{n-1} = \tfrac{b \pm a}2 \in [-\tfrac12,2],
\qquad \nu_n = \tfrac{\pm b \pm a}2 \in [-2,2].
\end{equation*}
(in fact we could further restrict the combinations of signs that appear in~$\nu$,
but since we are interested in an upper bound on direct summands of~$(\cU^\lambda)^\vee \otimes \cU^\mu$, we can neglect this).
It is convenient to distinguish three cases:
\begin{description}
  \item[Case 1]
  If $1 \le t \le n-2$ and~$(a,b) \ne (2,2)$, then
  \begin{equation*}
    \nu_1 = \nu_2 \in [\tfrac32 - n, -\tfrac12], \qquad \nu_3,\dots,\nu_{n-1},\nu_n \in [-\tfrac32,\tfrac32],
  \end{equation*}
  For any such point we have $|\nu_i + n - i| < n - 1$ for all $i$, hence~\eqref{acyclicity:1} holds for~$\nu$ and,
  therefore, all summands in~\eqref{eq:ulm} are acyclic by Lemma~\ref{lemma:acyclicity-new}.

  \item[Case 2]
  If $t = 0$ and $a > b$, then
  \begin{equation*}
    \nu_1 = \nu_2 = -\tfrac12, \qquad \nu_3,\dots,\nu_{n-1} \in [-\tfrac12,\tfrac32], \qquad \nu_n \in [-\tfrac32, \tfrac32].
  \end{equation*}
  For any such point we still have $|\nu_i + n - i| < n - 1$ for all $i$, hence again all summands in~\eqref{eq:ulm} are acyclic.

  \item[Case 3]
  If $t = 0$ and $a = b$, then
  \begin{equation*}
    \nu_1 = \nu_2 = 0, \qquad \nu_3,\dots,\nu_{n-1} \in [0,2], \qquad \nu_n \in [-2, 2].
  \end{equation*}
  It is easy to see that the only non-acyclic bundle among $\cU^\nu$ is the trivial bundle.
  Indeed, if $\tilde\nu = \nu + \rho$ then $(\tilde\nu_1, \tilde\nu_2) = (n-1,n-2)$
  and $n - 1 \ge \tilde\nu_3 \ge \dots \ge \tilde\nu_{n-1} \ge 1$,
  and the only possibility to satisfy these restriction and avoid the walls~\eqref{eq:walls-dn}
  is to have $\tilde\nu_i = n - i$ for~$1 \le i \le n - 1$.
  Finally, $0 \le \tilde\nu_n \le 2$, and sine $n \ge 3$, it must be zero to avoid the walls.
  By Lemma~\ref{lemma:prv} the trivial bundle is a summand of $(\cU^\lambda)^\vee \otimes \cU^\mu$
  only when $\lambda = \mu$ and then its multiplicity is equal to~1.
  Hence, in this case we have~$\Ext^\bullet(\cU^\lambda,\cU^\mu) = \Bbbk[0]$.
\end{description}
This completes the proof of the lemma.
\end{proof}

In the next two lemmas we show that the ``tautological'' part of the collection can be merged with the ``spinor'' part.

\begin{lemma}
\label{lemma:so3}
We have $\Ext^\bullet(\cU^{\omega_j}(t),S^k\cU^\vee) = 0$ for all $0 \le k \le n - 2$, $j \in \{n - 1,n\}$, and~$0 \le t \le 2n - 4$.
\end{lemma}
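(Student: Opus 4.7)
My plan for proving Lemma 3.13 is to follow the same template as Lemma 3.11 and Lemma 3.12: describe $(\cU^{\omega_j}(t))^\vee \otimes S^k\cU^\vee$ as a direct sum of $\cU^\nu$ via Lemma 3.6, and then check that every such $\nu$ falls under the hypotheses of Lemma 3.11.

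First I would use Serre duality together with $\omega_{\OG(2,2n)} \cong \cO(3-2n)$ to reduce to the range $0 \le t \le n-2$, just as in Lemmas 3.12--3.13. Then I would set
\begin{equation*}
\lambda = \omega_j + t\omega_2 = \bigl(t+\tfrac12,\, t+\tfrac12,\, \tfrac12,\dots,\tfrac12,\, \pm\tfrac12\bigr),
\qquad
\mu = k\omega_1 = (k,0,0,\dots,0),
\end{equation*}
with the sign depending on $j\in\{n-1,n\}$. The action of $\rW^\rL$ on $\lambda$ fixes the first two coordinates (they are equal) and permutes and flips signs (in pairs) of the last $n-2$. Therefore Lemma 3.6 expresses $(\cU^\lambda)^\vee \otimes \cU^\mu$ as a sum of $\cU^\nu$ with $\nu$ dominant for $\rL$ lying in the convex hull of the orbit of $\mu - w\lambda$. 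Concretely, every relevant $\nu$ has
\begin{equation*}
\nu_1 = k - t - \tfrac12, \qquad \nu_2 = -t - \tfrac12, \qquad \nu_i \in \bigl\{-\tfrac12,\tfrac12\bigr\} \text{ for } 3\le i\le n.
\end{equation*}

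Next I would verify the acyclicity hypothesis of Lemma 3.11 for each such $\nu$. For $i\ge 3$ we have $|\nu_i + n - i| \le n - i + \tfrac12 \le n - \tfrac52 < n-2$, automatically. For $i = 2$, the value $|\nu_2 + n - 2| = |n - t - \tfrac52|$ lies in $[\tfrac12, n-\tfrac52]$ since $0\le t\le n-2$, so again $|\nu_2 + n - 2| < n-2$. Thus conditions $|\nu_i + n - i| < n-2$ hold for all $i\ge 2$.

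For the first coordinate we split into two cases. If $k \le t$, then $\nu_1 + n - 1 = k - t + n - \tfrac32 \le n - \tfrac32$, and one checks $|\nu_1 + n - 1| < n-1$, so condition \eqref{acyclicity:1} of Lemma 3.11 is satisfied and $\cU^\nu$ is acyclic. If $k > t$, the inequality at $i = 1$ may fail, but since the inequalities at all other indices hold strictly with bound $n-2$, condition \eqref{acyclicity:2} applies, and again $\cU^\nu$ is acyclic. In both cases every summand of $(\cU^\lambda)^\vee \otimes \cU^\mu$ is acyclic, which gives $\Ext^\bullet(\cU^{\omega_j}(t), S^k\cU^\vee) = 0$. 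I do not expect any serious obstacle here; the main thing to be careful about is book-keeping on half-integer coordinates and the ranges of $t$ and $k$, together with the standing hypothesis $n \ge 4$ used in Section 3.
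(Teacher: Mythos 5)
Your weight computation and the application of Lemma~\ref{lemma:acyclicity-new} for $0 \le t \le n-2$ are correct and essentially reproduce the paper's argument (the paper obtains a single irreducible summand via~\eqref{tp:a1-la} rather than an upper bound on summands via Lemma~\ref{lemma:prv}, but that difference is harmless). The genuine gap is the opening step: Serre duality does \emph{not} reduce this statement to the range $0 \le t \le n-2$. Unwinding it with Corollary~\ref{corollary:duals}, dualizing $S^k\cU^\vee$ contributes an extra twist by $\cO(-k)$, so one finds
\begin{equation*}
\Ext^p(\cU^{\omega_j}(t),S^k\cU^\vee) \cong \Ext^{4n-7-p}(\cU^{\omega_{j'}}(2n-4-t+k),S^k\cU^\vee)^\vee ,
\end{equation*}
and the new twist $t' = 2n-4-t+k$ need not lie in $[0,n-2]$ when $t \ge n-1$: for instance $t=n-1$, $k=n-2$ gives $t'=2n-5$, and the substitution $t \mapsto 2n-4-t+k$ is an involution, so iterating does not help. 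The reduction does work in Lemmas~\ref{lemma:so1} and~\ref{lemma:so2} because there the source and target are of the same type and simply get swapped; here they are not, so as written your argument leaves the cases $n-1 \le t \le 2n-4$ unproven.

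The fix is short and is exactly what the paper does: run your weight analysis directly over the whole range $0 \le t \le 2n-4$. For $t \le 2n-5$ your estimate $|\nu_2 + n - 2| = |n-t-\tfrac52| \le n-\tfrac52 < n-2$ still holds, so condition~\eqref{acyclicity:2} applies with exceptional index $i=1$. For $t = 2n-4$ that estimate fails (the value is $n-\tfrac32$), but then $|\nu_1 + n - 1| = |k - n + \tfrac52| \le n-\tfrac52 < n-1$ while all other coordinates satisfy $|\nu_i + n - i| < n-1$ as well, so condition~\eqref{acyclicity:1} applies instead.
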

\begin{proof}
By Corollary~\ref{corollary:duals} and~\eqref{tp:a1-la} we have
\begin{equation*}
(\cU^{\omega_j}(t))^\vee \otimes S^k\cU^\vee \cong
\cU^{\omega_{j'} - (t+1)\omega_2} \otimes \cU^{k \omega_1} \cong
\cU^{k\omega_1 - (t+1)\omega_2 + \omega_{j'}},
\end{equation*}
where $j' = j$ if $n$ is even and $j' = 2n - 1 - j$ if $n$ is odd.
So, it is enough to compute the cohomology of bundles $\cU^\lambda$ with
\begin{equation*}
\lambda = \left(k - t - \tfrac12, - t - \tfrac12, \tfrac12,\dots,\tfrac12,\pm\tfrac12 \right).
\end{equation*}
If $0 \le t \le 2n - 5$, then $|\lambda_i + n - i| < n - 2$ for $i \ge 2$.
Hence~\eqref{acyclicity:2} holds and the bundle $\cU^\lambda$ is acyclic by Lemma~\ref{lemma:acyclicity-new}.
Similarly, if~$t = 2n - 4$, then we have $|\lambda_i + n - i| < n - 1$ for all~$i$.
Hence~\eqref{acyclicity:1} holds and the bundle $\cU^\lambda$ is acyclic by Lemma~\ref{lemma:acyclicity-new}.
\end{proof}

\begin{lemma}
\label{lemma:so4}
If $0 \le k \le n - 2$, $j \in \{n - 1,n\}$, and~$1 \le t \le 2n - 2$, then
\begin{equation*}
\Ext^\bullet(S^k\cU^\vee(t),\cU^{2\omega_j}) =
\begin{cases}
\Bbbk[5 - 3n], 	& \text{if $k = n - 2$ and $t = n$,}\\
\Bbbk[2 - n], 	& \text{if $k = n - 2$ and $t = 1$,}\\
0,		& \text{otherwise.}
\end{cases}
\end{equation*}
\end{lemma}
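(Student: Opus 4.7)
The plan is to follow the strategy of the previous lemmas: reduce the Ext-computation to the cohomology of a single equivariant line bundle, then apply Borel--Bott--Weil directly (rather than only Lemma~\ref{lemma:acyclicity-new}, whose sufficient conditions turn out to be too coarse for some of the relevant weights). By Corollary~\ref{corollary:duals} and~\eqref{tp:a1-la} of Lemma~\ref{lemma-tensor-products}, we have
\[
(S^k\cU^\vee(t))^\vee \otimes \cU^{2\omega_j} \cong \cU^\lambda, \qquad \lambda := k\omega_1 + 2\omega_j - (k+t)\omega_2.
\]
Using~\eqref{eq:dn-weights} and~\eqref{eq:rho-dn}, one writes in the $\epsilon$-basis
\[
\lambda = (1-t,\; 1-k-t,\; 1,\; 1,\; \ldots,\; 1,\; \varepsilon),
\qquad
\tilde\lambda := \lambda + \rho = (n-t,\; n-1-k-t,\; n-2,\; n-3,\; \ldots,\; 2,\; \varepsilon),
\]
where $\varepsilon = +1$ for $j = n$ and $\varepsilon = -1$ for $j = n-1$.

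The key observation is that the last $n-2$ coordinates of $\tilde\lambda$ have absolute values exactly the set $\{1, 2, \ldots, n-2\}$, all distinct. Therefore $\tilde\lambda$ lies on a wall $\tilde\lambda_i \pm \tilde\lambda_j = 0$ (and $\cU^\lambda$ is acyclic by Borel--Bott--Weil) as soon as $|n - t| \in \{1, 2, \ldots, n-2\}$ or $|n - 1 - k - t| \in \{1, 2, \ldots, n-2\}$. An elementary check over $1 \le t \le 2n-2$ and $0 \le k \le n-2$ shows that the first condition fails only for $t \in \{1, n\}$, and that in those two cases the second condition then fails only for $k = n-2$; one also checks by inspection that $|\tilde\lambda_1| \ne |\tilde\lambda_2|$ in both remaining cases. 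Hence $\cU^\lambda$ is acyclic outside the two exceptional pairs $(k,t) = (n-2,1)$ and $(k,t) = (n-2,n)$, which gives the ``otherwise'' clause.

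For $(k,t) = (n-2, 1)$ we get $\tilde\lambda = (n-1,\; 0,\; n-2,\; \ldots,\; 2,\; \varepsilon)$, whose absolute values are the set $\{0, 1, \ldots, n-1\}$ so that the unique dominant $W$-translate is $\rho$. To find $\ell(w)$ I use $\ell(w) = \#\{\alpha \in \Delta^+ : \langle \tilde\lambda, \alpha\rangle < 0\}$; only $i = 2$ contributes, yielding $n - 3$ terms from $\epsilon_2 - \epsilon_j$ with $3 \le j \le n-1$ plus one term from the pair involving position $n$, for a total of $n - 2$. Since $w\tilde\lambda - \rho = 0$, Borel--Bott--Weil gives cohomology $\Bbbk[2-n]$, as required. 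For $(k,t) = (n-2, n)$ we get $\tilde\lambda = (0,\; -(n-1),\; n-2,\; \ldots,\; 2,\; \varepsilon)$; again the absolute values are $\{0, 1, \ldots, n-1\}$ and the dominant translate is $\rho$. Now $i = 1$ contributes $n - 1$ inversions (one per $j \ge 3$, since $\tilde\lambda_1 = 0$ is strictly smaller in absolute value than each $|\tilde\lambda_j|$, plus one from $\tilde\lambda_1 + \tilde\lambda_2 < 0$), and $i = 2$ contributes $2(n-2)$ inversions (the strongly negative $\tilde\lambda_2$ makes both $\tilde\lambda_2 \pm \tilde\lambda_j < 0$ for every $3 \le j \le n$), for a grand total $\ell(w) = 3n - 5$. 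This gives cohomology $\Bbbk[5-3n]$.

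The main obstacle is the length computation in the second exceptional case: the presence of a $0$-entry alongside a strongly negative entry makes the explicit factorization of $w$ in the generators of $W(D_n) = \fS_n \ltimes (\bZ/2)^{n-1}$ somewhat awkward, since one must be careful about the even-parity constraint on sign changes. Using the formula $\ell(w) = \#\{\alpha > 0 : \langle \tilde\lambda, \alpha\rangle < 0\}$ bypasses this entirely, reducing the computation to an explicit inventory of signs of pairwise sums and differences, which is straightforward but must be done carefully for each of the two values of $\varepsilon$ (the answer being independent of $\varepsilon$, as the sanity check $n = 4$ confirms).
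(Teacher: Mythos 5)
Your proposal is correct and follows essentially the same route as the paper: both reduce $\Ext^\bullet(S^k\cU^\vee(t),\cU^{2\omega_j})$ to the cohomology of the single bundle $\cU^{k\omega_1+2\omega_j-(k+t)\omega_2}$, observe that $\lambda+\rho$ hits a wall except when $(k,t)=(n-2,1)$ or $(n-2,n)$, and apply Borel--Bott--Weil in those two cases. The only (harmless) difference is that you compute $\ell(w)$ by counting positive roots made negative, whereas the paper exhibits explicit reduced words of lengths $n-2$ and $3n-5$; both give the same answer.
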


In fact, using isomorphisms of Corollary~\ref{corollary:duals} and Serre duality
it is possible to identify one non-trivial $\Ext$-space above with the dual of the other.

\begin{proof}
As before, we have
\begin{equation*}
(S^k\cU^\vee(t))^\vee \otimes \cU^{2\omega_j} \cong
S^k\cU^\vee(-k-t) \otimes \cU^{2\omega_j} \cong
\cU^{k \omega_1 - (k + t)\omega_2 + 2\omega_j}.
\end{equation*}
So, it is enough to compute the cohomology of the bundle $\cU^\lambda$ with
\begin{equation*}
\lambda = (1-t,1-k-t,1,\dots,1,\pm1).
\end{equation*}
For this we directly apply the Borel--Bott--Weil theorem.
We have
\begin{equation*}
\lambda + \rho = (n - t, n - 1 - k - t, n - 2, \dots, 2, \pm 1).
\end{equation*}
It is convenient to distinguish several cases:
\begin{description}
\item[Case 1]
If $2 \le t \le 2n - 2$ and $t \ne n$, then the absolute value of the first coordinate of~\mbox{$\lambda + \rho$}
is equal to the absolute value of one of the last $n - 2$ coordinates.
Hence, the bundle~$\cU^\lambda$ is acyclic.
\item[Case 2]
If $t = n$ and $0 \le k \le n - 3$, then $n - 1 - k - t = - k - 1$,
and the absolute value of the second coordinate of $\lambda + \rho$ is equal to the absolute value of one of the last $n - 2$ coordinates.
Hence, $\cU^\lambda$ is acyclic.
\item[Case 3]
If $t = n$ and~$k = n - 2$, then $\lambda + \rho = (0, 1 - n, n - 2, \dots, 2, \pm 1) = w\rho$, where
\begin{equation*}
w = (\rs_{n-1} \cdot \rs_{n-2} \cdot \ldots \cdot \rs_2) \cdot \rs_1 \cdot
(\rs_2 \cdot \ldots \cdot \rs_{n-2}) \cdot \rs_{n-1} \cdot \rs_n \cdot (\rs_{n-2} \cdot \ldots \cdot \rs_2)
\end{equation*}
and $\rs_i$ are the simple reflections.
By the Borel--Bott--Weil theorem the cohomology of $\cU^\lambda$ equals $\Bbbk[5 - 3n]$.
\item[Case 4]
If $t = 1$ and $0 \le k \le n - 3$, then $n - 1 - k - t = n - 2 - k$,
and the absolute value of the second coordinate of $\lambda + \rho$ is equal to the absolute value of one of the last $n - 2$ coordinates.
Hence, $\cU^\lambda$ is acyclic.
\item[Case 5]
If $t = 1$ and $k = n - 2$, then $\lambda + \rho = (n - 1, 0, n - 2, \dots, 2, \pm 1) = w\rho$, where
\begin{equation*}
w = \rs_{n-1} \cdot \rs_{n-2} \cdot \ldots \cdot \rs_2
\end{equation*}
and $\rs_i$ are the simple reflections.
By the Borel--Bott--Weil theorem the cohomology of $\cU^\lambda$ equals $\Bbbk[2 - n]$.
\end{description}
This completes the proof of the lemma.
\end{proof}

Now we deduce

\begin{proposition}
The collection~\eqref{eq-collection-OG-even-I} is exceptional.
\end{proposition}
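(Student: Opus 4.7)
The plan is to verify exceptionality of~\eqref{eq-collection-OG-even-I} by systematically reducing every pairwise $\Ext$-computation to Lemmas~\ref{lemma:so1}--\ref{lemma:so4}, invoking Serre duality (via $\omega_{\OG(2,2n)} \cong \cO(3-2n)$) only for pairs whose naive shift lies outside the ranges those lemmas cover.

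First, I would establish internal exceptionality of each block. Inside $\cA(s)$ or $\cB(s)$, the tautological subpart $\cO, \cU^\vee, \dots, S^{n-2}\cU^\vee$ (truncated at $S^{n-3}\cU^\vee$ in the case of $\cA$) is strongly exceptional by the first assertion of Lemma~\ref{lemma:so1}; the orthogonality from $\cS_\pm$ to each $S^k\cU^\vee$ is Lemma~\ref{lemma:so3} at $t=0$; and exceptionality of the pair $\cS_-, \cS_+$ is the $(a,b)=(1,1)$, $t=0$ case of Lemma~\ref{lemma:so2}. Mutual exceptionality of the two initial objects $\cU^{2\omega_{n-1}}(-1)$ and $\cU^{2\omega_n}(-1)$ likewise follows from Lemma~\ref{lemma:so2} after a common twist.

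Next I would verify semiorthogonality between distinct blocks. Between a later block and an earlier one separated by a twist $u \in [1, 2n-4]$, one must show $\Ext^\bullet(E(u), F) = 0$ for every pair of generators. Four families appear: $(S^k\cU^\vee(u), S^l\cU^\vee)$ is handled by Lemma~\ref{lemma:so1}; $(\cS_\pm(u), S^l\cU^\vee)$ by Lemma~\ref{lemma:so3}; twisted spinor pairs by Lemma~\ref{lemma:so2}; and the remaining family $(S^k\cU^\vee(u), \cS_\pm)$ is converted by Serre duality into $\Ext^\bullet(\cS_\pm(2n-3-u), S^k\cU^\vee)$, with dual shift in $[1, 2n-4]$, which is again covered by Lemma~\ref{lemma:so3}. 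The presence of $S^{n-2}\cU^\vee$ in $\cB$ but not in $\cA$ slightly enlarges the allowed ranges of $k, l$, but stays within the hypotheses of these lemmas.

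Finally I would verify orthogonality of $\cU^{2\omega_j}(-1)$ to each generator $G$ of every later block at shift $s$, i.e.\ $\Ext^\bullet(G(s+1), \cU^{2\omega_j}) = 0$. For $G = S^k\cU^\vee$ this is Lemma~\ref{lemma:so4}, whose two potentially non-zero cases $k = n-2$, $s+1 \in \{1, n\}$ never occur in our collection: $S^{n-2}\cU^\vee$ appears only in the blocks $\cB(1), \dots, \cB(n-2)$, so the relevant shifts $s \in \{0, n-1\}$ are impossible. For $G = \cS_\pm$, Lemma~\ref{lemma:so2} handles $s+1 \le 2n-4$ directly, and the single remaining case $s = 2n-4$ reduces by Serre duality to the $(a,b)=(2,1)$, $t=0$ case of Lemma~\ref{lemma:so2}. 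The main difficulty is bookkeeping: every pair of objects across distinct blocks must be tracked, and the precise ranges of shifts for which the boundary generator $S^{n-2}\cU^\vee$ does or does not appear must be carefully handled, so that the few potentially non-vanishing $\Ext$-spaces identified in Lemmas~\ref{lemma:so1} and~\ref{lemma:so4} never actually obstruct exceptionality.
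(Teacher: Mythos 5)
Your proposal is correct and takes essentially the same route as the paper, whose entire proof of this proposition is the one-line statement that Lemmas~\ref{lemma:so1}, \ref{lemma:so2}, \ref{lemma:so3}, \ref{lemma:so4} combined with Serre duality give the result. Your version simply makes the bookkeeping explicit, and it correctly identifies the two places where Serre duality is genuinely needed (later tautological against earlier spinor, and $\cS_\pm(2n-4)$ against $\cU^{2\omega_j}(-1)$) as well as why the non-vanishing cases of Lemmas~\ref{lemma:so1} and~\ref{lemma:so4} never occur because $S^{n-2}\cU^\vee$ appears only in the blocks $\cB(1),\dots,\cB(n-2)$.
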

\begin{proof}
A combination of Lemmas~\ref{lemma:so1}, \ref{lemma:so2}, \ref{lemma:so3}, \ref{lemma:so4}, and Serre duality proves the proposition.
\end{proof}

\subsection{Some useful complexes}

In this section we construct some exact sequences and complexes that will be used to check that the collection~\eqref{eq-collection-OG-even-I} is full.

Let $V$ be a vector space of dimension~$2n \ge 8$ endowed with a non-degenerate symmetric bilinear form
and let $\OG(2,V) = \OG(2,2n)$ be the corresponding even orthogonal Grassmannian.
We denote $\cV = V \otimes \cO$ the trivial vector bundle with fiber~$V$.
Note that~$V$ and~$\cV$ are canonically self-dual (via the chosen symmetric bilinear form on~$V$).
We have the tautological short exact sequence on $\OG(2,2n)$
\begin{equation*}
0 \to \cU \to \cV \to \cV/\cU \to 0
\end{equation*}
and its dual
\begin{equation*}
0 \to \cU^\perp \to \cV \to \cU^\vee \to 0.
\end{equation*}
Taking exterior powers of the above sequences, we obtain the long exact sequences
\begin{multline}
\label{eq-useful-complex}
0 \to S^m\cU \to V \otimes S^{m-1} \cU \to \Lambda^2 V \otimes S^{m-2} \cU \to \dots \\
\dots  \to \Lambda^{m-1} V \otimes \cU \to \Lambda^m V \otimes \cO \to \Lambda^m(\cV/\cU) \to 0
\end{multline}
and
\begin{multline}
\label{eq-useful-complex-2}
0 \to \Lambda^m\cU^\perp \to \Lambda^m V \otimes \cO \to \Lambda^{m-1}V \otimes \cU^\vee \to \dots \\
\dots \to \Lambda^2V \otimes S^{m-2}\cU^\vee \to V \otimes S^{m-1}\cU^\vee \to S^m\cU^\vee \to 0
\end{multline}

Recall notation~\eqref{eq:bundles-dn} for spinor bundles.
We denote their spaces of global sections by
\begin{equation*}
\bfS_- := H^0(\OG(2,V),\cS_-) = \rV_{\Spin(2n)}^{\omega_{n-1}},\qquad
\bfS_+ := H^0(\OG(2,V),\cS_+) = \rV_{\Spin(2n)}^{\omega_n}
\end{equation*}
(the {\sf half-spinor representations} of~$\Spin(2n)$).
We set
\begin{equation}
\label{eq:wi}
W_i = \bigoplus_{s = 0}^{\lfloor i/2 \rfloor} \Lambda^{i - 2s} V.
\end{equation}
We will combine~\eqref{eq-useful-complex} and~\eqref{eq-useful-complex-2} to prove the following

\begin{proposition}
On $\OG(2,2n)$ there exists an exact sequence
\begin{multline}
\label{eq-OG-2-2n-exact-sequence-for-Sym}
0 \to S^{n-2} \cU \to  W_1 \otimes S^{n-3} \cU \to W_2 \otimes S^{n-4} \cU \to \dots
\\
\dots \to W_{n-2} \otimes \cO
\to \bfS_+ \otimes \cS_+ \oplus \bfS_- \otimes \cS_-   \to
W_{n-2} \otimes \cO(1)  \to \dots
\\
\dots \to W_2 \otimes S^{n-4} \cU^\vee(1) \to W_1 \otimes S^{n-3} \cU^\vee(1) \to S^{n-2} \cU^\vee(1) \to 0.
\end{multline}
\end{proposition}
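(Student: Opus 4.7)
The strategy is to splice two modified Koszul-type complexes at a spinor middle term. The left half will be a variant of the sequence~\eqref{eq-useful-complex} with $m = n-2$ in which the coefficient spaces $\Lambda^i V$ are enlarged to $W_i = \bigoplus_s \Lambda^{i-2s}V$. The additional summands $\Lambda^{i-2s}V$ for $s\ge 1$ arise from the isotropy of $\cU$ in $\cV$: the factorization $\cU\hookrightarrow \cU^\perp\subset\cV$ allows the original Koszul differential to be coupled with iterated contractions by the symmetric form $V\otimes V\to\Bbbk$. The right half is dual, obtained from~\eqref{eq-useful-complex-2} with $m=n-2$ and an $\cO(1)$-twist. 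In view of Corollary~\ref{corollary:duals} and the self-duality (up to $\cS_\pm$-swap for odd $n$) of the half-spinor representations, the middle term $\bfS_+\otimes\cS_+\oplus\bfS_-\otimes\cS_-$ is self-dual under $\cF\mapsto\cF^\vee\otimes\cO(1)$, so the two halves can be spliced compatibly.

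First I would build the left half by starting from~\eqref{eq-useful-complex} with $m = n-2$ and adjoining the extra summands $\Lambda^{i-2s}V\otimes S^{n-2-i}\cU$ one at a time, defining the new differentials as a combination of the original Koszul differential and symmetric-form contractions $\Lambda^jV \to \Lambda^{j-2}V$. Exactness of the resulting augmented complex (except at the rightmost column) should follow from exactness of~\eqref{eq-useful-complex} itself via an inductive spectral-sequence argument that treats the form contractions as a perturbation of the Koszul differential.

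The key and most difficult step is the identification of the rightmost cokernel,
\begin{equation*}
\operatorname{coker}\bigl(W_{n-3}\otimes\cU\to W_{n-2}\otimes\cO\bigr)\;\cong\;\bfS_+\otimes\cS_+\oplus\bfS_-\otimes\cS_-.
\end{equation*}
For this I would compute the $\Spin(2n)$-isotypic content of the left-hand side by Borel--Bott--Weil, using~\eqref{eq:wedge-n-2} as the seed: it already supplies the top summands $\cU^{2\omega_{n-1}}\oplus\cU^{2\omega_n}$, corresponding to the highest-weight components of $\cS_-\otimes\cS_-$ and $\cS_+\otimes\cS_+$ in type $\rD_n$. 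The total rank is then matched via the classical Cartan-type identity $\bfS_+\otimes\bfS_+^\vee\oplus\bfS_-\otimes\bfS_-^\vee\cong\bigoplus_k\Lambda^{2k}V$ applied fiberwise along the isotropic filtration $\cU\subset\cU^\perp\subset\cV$, which is exactly what produces the $W_{n-2}$ factor. The right half is constructed symmetrically, and the spliced complex is exact everywhere: away from the middle by the construction of each half, and at the middle by the above identification (together with its dual).

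The principal obstacle is this cokernel identification. It couples the Clifford action on half-spinor representations with the Koszul differentials built from the symmetric form on $V$, and must be carried out $\Aut(\OG(2,2n))$-equivariantly so that the outer $\fS_2$-symmetry (or $\fS_3$ when $n=4$) swapping $\cS_\pm$ is respected. A secondary subtlety comes from Corollary~\ref{corollary:duals}, which forces the splicing isomorphism to depend on the parity of~$n$: for even $n$ the spinor bundles are individually self-dual up to twist, while for odd $n$ the duality exchanges $\cS_-$ and $\cS_+$; either way the full middle term is self-dual, but this must be tracked carefully when gluing the two halves.
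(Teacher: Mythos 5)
There is a genuine gap, and it sits exactly at the step you yourself identify as the key one. You claim that the cokernel of $W_{n-3}\otimes\cU\to W_{n-2}\otimes\cO$ is the \emph{whole} middle term $\bfS_+\otimes\cS_+\oplus\bfS_-\otimes\cS_-$. That is incompatible with exactness of the displayed sequence: if the map $W_{n-2}\otimes\cO\to\bfS_+\otimes\cS_+\oplus\bfS_-\otimes\cS_-$ were surjective, exactness at the middle term would force the next map to $W_{n-2}\otimes\cO(1)$ to vanish, and the sequence would fall apart. What is actually true is that this cokernel is a proper subbundle $F_-\subset\bfS_+\otimes\cS_+\oplus\bfS_-\otimes\cS_-$, and the middle term is a nontrivial extension $0\to F_-\to\bfS_+\otimes\cS_+\oplus\bfS_-\otimes\cS_-\to F_+\to 0$ with $F_+$ injecting into the right half. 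A Borel--Bott--Weil computation of isotypic content plus a rank count (your proposed method) cannot see this: it neither produces the maps into and out of the middle term nor distinguishes the extension from a direct sum, so it cannot establish exactness at the middle.

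The missing idea is the filtration of $\bfS_\pm\otimes\cS_\pm^\vee$ with associated graded $\bigoplus\Lambda^{2s}\cU^\perp$ (even exterior powers for $n$ even, odd ones for $n$ odd), which is \cite[Proposition~6.7]{Ku08a}. Twisting by $\cO(1)$ and using Corollary~\ref{corollary:duals}, one gets a filtration of $\bfS_+\otimes\cS_+\oplus\bfS_-\otimes\cS_-$ whose top-half factors $\Lambda^{n+2k}\cU^\perp(1)\cong\Lambda^{n-2-2k}(\cV/\cU)$ assemble into $F_-$ and whose bottom-half factors assemble into $F_+$. Each factor $\Lambda^{n-2-2k}(\cV/\cU)$ has the Koszul resolution~\eqref{eq-useful-complex} with $m=n-2-2k$; stacking these resolutions into a bicomplex realizing the iterated extensions (which requires knowing that $\{S^i\cU\}$ is a \emph{strong} exceptional collection, Lemma~\ref{lemma:so1}) and totalizing is precisely what produces the terms $W_i\otimes S^{n-2-i}\cU$ --- this is where the $W_i$ come from, not from coupling the Koszul differential with contractions by the symmetric form inside a single complex. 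Your left half as described resolves nothing identifiable, and the ``inductive spectral-sequence argument'' for its exactness has no anchor without first knowing that its cokernel is $F_-$. Likewise, \eqref{eq:wedge-n-2} concerns $\Lambda^{n-2}(\cU^\perp/\cU)$ and plays no role here (the paper uses it later, in the fullness argument). Without the spinor filtration your construction does not get off the ground.
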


\begin{proof}
If $n$ is even, we apply~\cite[Proposition 6.7]{Ku08a} and conclude that there is a filtration on~\mbox{$\bfS_+ \otimes \cS_+^\vee \oplus \bfS_- \otimes \cS_-^\vee$} with factors of the form $\Lambda^{2s}\cU^\perp$, where
$0 \le s \le n - 1$. Tensoring by~$\cO(1)$ and taking into account the isomorphisms
\begin{equation*}
\cS_+^\vee(1) \cong \cS_+,
\qquad
\cS_-^\vee(1) \cong \cS_-
\end{equation*}
(Corollary~\ref{corollary:duals}) we obtain a filtration on $\bfS_+ \otimes \cS_+ \oplus \bfS_- \otimes \cS_-$
with factors of the form $\Lambda^{2s}\cU^\perp(1)$, where $0 \le s \le n - 1$.

Similarly, if $n$ is odd, the argument of~\cite[Proposition 6.7]{Ku08a} proves that
there is a filtration on $\bfS_+ \otimes \cS_-^\vee \oplus \bfS_- \otimes \cS_+^\vee$
with factors of the form $\Lambda^{2s+1}\cU^\perp$, where $0 \le s \le n - 2$.
Tensoring by~$\cO(1)$ and taking into account isomorphisms
\begin{equation*}
\cS_+^\vee(1) \cong \cS_-,
\qquad
\cS_-^\vee(1) \cong \cS_+,
\end{equation*}
we finally obtain a filtration on $\bfS_+ \otimes \cS_+ \oplus \bfS_- \otimes \cS_-$
with factors of the form $\Lambda^{2s+1}\cU^\perp(1)$, where~\mbox{$0 \le s \le n - 2$}.

Thus, in both cases we have a filtration on $\bfS_+ \otimes \cS_+ \oplus \bfS_- \otimes \cS_-$
with factors
\begin{equation*}
\Lambda^{2n - 2 - r}\cU^\perp(1),
\Lambda^{2n - 4 - r}\cU^\perp(1), \dots,
\Lambda^{n}\cU^\perp(1),
\Lambda^{n - 2}\cU^\perp(1), \dots,
\Lambda^{2 + r}\cU^\perp(1),
\Lambda^{r}\cU^\perp(1),
\end{equation*}
where $r = 0$ if $n$ is even and $r = 1$ if $n$ is odd.
Note that the total number of factors $n - r$ is even in both cases.
This means that there is an exact sequence of vector bundles
\begin{equation*}
0 \to F_- \to \bfS_+ \otimes \cS_+ \oplus \bfS_- \otimes \cS_- \to F_+ \to 0,
\end{equation*}
where $F_-$ has a filtration with factors $\Lambda^{2n - 2 - r}\cU^\perp(1), \Lambda^{2n - 4 - r}\cU^\perp(1), \dots, \Lambda^{n}\cU^\perp(1)$
and $F_+$ has a filtration with factors $\Lambda^{n - 2}\cU^\perp(1), \dots, \Lambda^{2 + r}\cU^\perp(1), \Lambda^{r}\cU^\perp(1)$.
So, it is enough to show that~$F_-$ has a left resolution given by the first half of~\eqref{eq-OG-2-2n-exact-sequence-for-Sym},
and that~$F_+$ has a right resolution given by the second half of~\eqref{eq-OG-2-2n-exact-sequence-for-Sym}.

Indeed, each factor of the filtration of $F_-$ can be rewritten as
\begin{equation*}
\Lambda^{n + 2k}\cU^\perp(1) \cong \Lambda^{n - 2 - 2k}(\cV/\cU),
\end{equation*}
where $0 \le k \le (n - 2 - r)/2$,
hence has the resolution~\eqref{eq-useful-complex} with $m = n - 2 - 2k$.
Since by Lemma~\ref{lemma:so1} the exceptional collection $\{ S^i\cU \}_{i = 0}^{n-2}$ is \emph{strong},
the extensions of the filtration factors $\Lambda^{n + 2k}\cU^\perp(1)$ in $F_-$ can be realized
by a bicomplex with rows given by~\eqref{eq-useful-complex} with~\mbox{$m = n - 2 - 2k$} shifted by $-k$.
The totalization of this bicomplex can be written as
\begin{equation*}
S^{n-2} \cU \to  W_1 \otimes S^{n-3} \cU \to W_2 \otimes S^{n-4} \cU \to \dots \to W_{n-2} \otimes \cO
\end{equation*}
and thus provides a left resolution for $F_-$.

Similarly, each factor $\Lambda^{n-2-2k}\cU^\perp(1)$ of the filtration of $F_+$ has resolution~\eqref{eq-useful-complex-2}
twisted by~$\cO(1)$ (where $m = n - 2 - 2k$), and the above argument shows that $F_+$ has the right resolution
\begin{equation*}
W_{n-2} \otimes \cO(1)  \to \dots \to W_2 \otimes S^{n-4} \cU^\vee(1) \to W_1 \otimes S^{n-3} \cU^\vee(1) \to S^{n-2} \cU^\vee(1).
\end{equation*}
This completes the proof of the proposition.
\end{proof}

Note that the composition of morphisms $\cU \to \cV \to \cU^\vee$ is zero, hence $\cU \subset \cU^\perp$.
Moreover,
\begin{equation}
\label{eq:uperp-u}
\cU^\perp/\cU \cong \cU^{\omega_3 - \omega_2}.
\end{equation}

We will need the following lemma.

\begin{lemma}\label{lemma-double-complex}
For any $m \le 2n - 4$ on $\OG(2,2n)$ there exists a double complex
\begin{equation*}
\begin{small}
\xymatrix@C=1em@R=3ex{
\Lambda^m V \otimes \cO  \ar[r] &  \Lambda^{m-1} V \otimes \cU^\vee \ar[r] &  \Lambda^{m-2} V \otimes S^2 \cU^\vee \ar[r] & \dots \ar[r] &  V \otimes S^{m-1} \cU^\vee    \ar[r] &       S^m \cU^\vee      \\
\Lambda^{m-1} V \otimes \cU     \ar[r] \ar[u] &  \Lambda^{m-2} V \otimes \cU  \otimes \cU^\vee \ar[r] \ar[u] &  \Lambda^{m-3} V \otimes \cU  \otimes S^2 \cU^\vee  \ar[r] \ar[u] & \dots \ar[r]  &   \cU \otimes S^{m-1} \cU^\vee  \ar[u]          \\
\dots                                  \ar[u] &  \dots \ar[u] &  \dots \ar[u] &  \raisebox{-1.2ex}{\reflectbox{$\ddots$}} \\
\Lambda^2 V \otimes S^{m-2} \cU           \ar[r] \ar[u] &   V \otimes S^{m-2} \cU \otimes \cU^\vee \ar[r]\ar[u] &   S^{m-2} \cU \otimes S^2\cU^\vee     \ar[u]    \\
V \otimes S^{m-1} \cU           \ar[r] \ar[u] &   S^{m-1} \cU \otimes \cU^\vee   \ar[u]      \\
S^m \cU  \ar[u]
}
\end{small}
\end{equation*}
whose total complex has only one non-trivial cohomology in the middle term isomorphic to~$\Lambda^m (\cU^\perp/\cU)$.
\end{lemma}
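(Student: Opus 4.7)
The plan is to realize the double complex as the weight-$m$ piece of a natural bicomplex built on the bigraded algebra $\Lambda^\bullet \cV \otimes S^\bullet \cU \otimes S^\bullet \cU^\vee$, and then compute its cohomology by a spectral sequence, which reduces everything to a single Koszul-type complex for the isotropic inclusion $\cU \subset \cU^\perp$.

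First I would introduce two anti-commuting Koszul-type differentials on the above algebra. The differential $d$ is the derivation of tridegree $(+1, -1, 0)$ in the $(\Lambda \cV, S \cU, S \cU^\vee)$-grading, given on generators by the tautological inclusion $\cU \hookrightarrow \cV$; the differential $d'$ is the derivation of tridegree $(-1, 0, +1)$, given on generators by the dual surjection $\cV \twoheadrightarrow \cU^\vee$. Each squares to zero as a standard Koszul differential, and the anti-commutator $dd' + d'd$ is a derivation whose action on generators factors through the composition $\cU \hookrightarrow \cV \twoheadrightarrow \cU^\vee$. This composition is the restriction of the symmetric form on $\cV$ to the isotropic subbundle $\cU$, hence vanishes, so $d$ and $d'$ anti-commute. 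Restricting to the homogeneous component of total weight $m$ and placing the summand $\Lambda^{k} \cV \otimes S^{a} \cU \otimes S^{b} \cU^\vee$ (with $a + b + k = m$) at position $(r, c) = (m + 1 - a,\, b + 1)$ reproduces the picture of the lemma, with $d$ as the vertical and $d'$ as the horizontal differential.

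Next I would analyze the spectral sequence obtained by taking horizontal cohomology first. The $r$-th row from the bottom is the truncation of~\eqref{eq-useful-complex-2}, with $m$ replaced by $r - 1$, tensored with the locally free sheaf $S^{m - r + 1}\cU$. It is therefore exact except at its leftmost entry, where the cohomology equals $\Lambda^{r-1}\cU^\perp \otimes S^{m-r+1}\cU$. Hence the $E_1$ page is supported in the first column, with these bundles placed in total degree $r - 1$ for $1 \le r \le m + 1$. Tracing representatives shows that the induced vertical differential on $E_1$ coincides with the one obtained by applying the construction of~\eqref{eq-useful-complex} to the short exact sequence $0 \to \cU \to \cU^\perp \to \cU^\perp/\cU \to 0$ (this is where the isotropy $\cU \subset \cU^\perp$ enters a second time). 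By the very same formula~\eqref{eq-useful-complex}, the resulting $E_1$ complex extends to an exact sequence terminating in $\Lambda^m(\cU^\perp/\cU)$, so its cohomology is concentrated at the rightmost entry with value $\Lambda^m(\cU^\perp/\cU)$. Since that entry sits in total degree $m$, the spectral sequence collapses at $E_2$ and gives the claim.

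The main obstacle is the identification of the induced $E_1$ differential with the Koszul-type map associated to the inclusion $\cU \subset \cU^\perp$, since the $E_1$ classes are represented via a zigzag through the row resolutions. However, this should follow by a short diagram chase: the whole bicomplex arises from a single functorial construction applied to the pair $(\cU \hookrightarrow \cV,\; \cV \twoheadrightarrow \cU^\vee)$, and both differentials $d$ and $d'$ are built from the same data, so the naturality of the Koszul machinery forces the comparison.
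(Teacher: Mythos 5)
Your proposal is correct and is essentially the paper's argument written out in full: the paper simply observes that the bicomplex is the $m$-th exterior power (in the graded sense, i.e.\ the weight-$m$ piece of $S^\bullet\cU\otimes\Lambda^\bullet\cV\otimes S^\bullet\cU^\vee$) of the three-term complex $\cU\to\cV\to\cU^\vee$, which is quasi-isomorphic to $\cU^\perp/\cU$ precisely because $\cU$ is isotropic. Your row-filtration spectral sequence and the identification of the induced $E_1$ differential with the Koszul differential for $\cU\subset\cU^\perp$ are exactly the details that this one-line formulation packages, so the argument goes through.
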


\begin{proof}
The bicomplex is just the $m$-th exterior power of the complex
\begin{equation*}
\cU \to \cV \to \cU^\vee,
\end{equation*}
which is quasiisomorphic to $\cU^\perp/\cU$, hence the claim (cf.\ the argument of~\cite[Lemma~7.3]{Ku08a}).
\end{proof}

Finally, we will need the following lemma from~\cite{Ku08a}.

\begin{lemma}[\cite{Ku08a}, Proposition 6.3]
\label{lemma:sss}
There exists a complex
\begin{equation*}
0 \to \cS_- \to \bfS_- \otimes \cO(1) \to \cS_-(1) \to 0,
\end{equation*}
whose only non-trivial cohomology group is in the middle term and is isomorphic to $\cS_+ \otimes \cU^\vee$.
\end{lemma}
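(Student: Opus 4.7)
The plan is to realize the complex as arising from a natural three-step equivariant filtration of the trivial bundle $\bfS_- \otimes \cO$ on $\OG(2, V)$, induced by the Clifford action of the tautological isotropic bundle $\cU \subset \cV$. First, I would choose at a reference point $[U] \in \OG(2, V)$ a complementary isotropic 2-plane $U'$ so that $V = (U \oplus U') \oplus V_0$ with $V_0 = (U \oplus U')^\perp$ a non-degenerate orthogonal $(2n-4)$-dimensional subspace. The Clifford algebra then factorizes as $\mathrm{Cl}(V) \cong \mathrm{Cl}(U \oplus U') \otimes \mathrm{Cl}(V_0)$, and a classical computation realizes the half-spinor module of $U \oplus U'$ as $\Lambda^\bullet U$. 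This yields a decomposition of $\bfS_-(V)$, as an $\rL$-representation, into three irreducible summands indexed by the exterior powers $\Lambda^0 U$, $\Lambda^1 U$, $\Lambda^2 U$, tensored with appropriate half-spinors of $V_0$.

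Next, I would globalize this pointwise decomposition. As $[U]$ varies in $\OG(2, V)$, the summands indexed by $\Lambda^k U$ with $k \geq i$ assemble into an equivariant subbundle $F^i \subset \bfS_- \otimes \cO$, yielding a three-step filtration $0 \subset F^2 \subset F^1 \subset F^0 = \bfS_- \otimes \cO$. After accounting for the "half-determinant" twist coming from the fact that $\rL$ is a double cover of $\GL(2) \times \SO(V_0)$, the associated graded pieces can be identified as $F^2 \cong \cS_-(-1)$, $F^1/F^2 \cong \cS_+ \otimes \cU^\vee$, and $F^0/F^1 \cong \cS_-$. The composition $F^2 \hookrightarrow F^0 \twoheadrightarrow F^0/F^1$ then vanishes automatically since $F^2 \subset F^1 = \ker(F^0 \twoheadrightarrow F^0/F^1)$, producing a three-term complex whose left map is injective, whose right map is surjective, and whose middle cohomology equals $F^1/F^2$. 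Twisting by $\cO(1)$ yields the desired complex $0 \to \cS_- \to \bfS_- \otimes \cO(1) \to \cS_-(1) \to 0$ with middle cohomology $\cS_+ \otimes \cU^\vee$.

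The main obstacle will be the precise identification of the middle graded piece as $\cS_+ \otimes \cU^\vee$, as opposed to $\cS_+ \otimes \cU$ (which differs by a twist by $\cO(-1)$). This requires careful bookkeeping of the half-determinant factor reflecting the double cover structure of $\rL$. I would verify the identification by computing the $\rL$-highest weight of each graded piece at the fiber over the base point, using the formula \eqref{eq:omega-prime} to translate between the intrinsic $\rL$-fundamental weights $\omega'_i$ and the $\Spin(2n)$-fundamental weights $\omega_i$. Specifically, the middle piece should appear with highest weight $\omega_1 + \omega_n$, identifying it with the irreducible equivariant bundle $\cU^{\omega_1 + \omega_n} = \cS_+ \otimes \cU^\vee$, while the analogous computation on the bottom and top pieces locates them at $\cS_-(-1)$ and $\cS_-$ respectively. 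An alternative, more computational route would use Corollary \ref{corollary:duals} to construct the two maps directly as an evaluation and its dual, and then verify the middle cohomology by a case-by-case Borel--Bott--Weil computation on the fibers, but this tends to be more cumbersome than the filtration approach outlined above.
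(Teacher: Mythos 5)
The paper does not prove this lemma---it is quoted verbatim from \cite{Ku08a}*{Proposition 6.3}---and your reconstruction is correct and follows essentially the same route as that reference: the three-step $\rP$-equivariant filtration of $\bfS_-\otimes\cO$ whose step $F^i$ is the image of iterated Clifford multiplication $\Lambda^i\cU\otimes\bfS_+\to\bfS_-\otimes\cO$ (intrinsic, unlike the individual summands $\Lambda^kU\otimes\bfS(V_0)$, which depend on the choice of $U'$), followed by the weight identification of the graded pieces. The only slip is a twist bookkeeping inconsistency that you yourself flag: for the \emph{untwisted} filtration the graded pieces are $F^2\cong\cS_-(-1)$, $F^1/F^2\cong\cS_+\otimes\cU\cong\cS_+\otimes\cU^\vee(-1)$ (highest weight $\omega_1+\omega_n-\omega_2$, i.e.\ $\tfrac12(\epsilon_1-\epsilon_2+\epsilon_3+\cdots+\epsilon_n)$, coming from the weights of $\bfS_-$ with opposite signs on $\epsilon_1,\epsilon_2$), and $F^0/F^1\cong\cS_-$; only after tensoring the whole filtration with $\cO(1)$ does the middle piece become $\cU^{\omega_1+\omega_n}=\cS_+\otimes\cU^\vee$, as required.
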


\subsection{Fullness}
\label{subsection:fullness-dn}

The proof of the fullness of the collection~\eqref{eq-collection-OG-even-I} goes
via restriction to odd Grassmannians $\OG(2,2n-1)$, and is similar to the proofs of fullness given in~\cite{Ku08a}.
For technical reasons it is more convenient to work with the collection
\begin{align}
\label{eq-collection-OG-even-II}
\cA(2-n) , \dots ,\cA(-1), \cU^{2\omega_{n-1}}(-1), \cU^{2\omega_{n}}(-1) , \cA , \cB(1) , \dots , \cB(n-2).
\end{align}
By Serre duality, exceptionality of~\eqref{eq-collection-OG-even-I} implies exceptionality of~\eqref{eq-collection-OG-even-II}.
Conversely, fullness of~\eqref{eq-collection-OG-even-II} implies fullness of~\eqref{eq-collection-OG-even-I}.

We will deduce fullness of~\eqref{eq-collection-OG-even-II} from the following key lemma.

\begin{lemma}\label{key-lemma}
Let $\E$ be the set of bundles appearing in~\eqref{eq-collection-OG-even-II}
and let $\E'$ be its subset defined as
\begin{equation}
\label{eq:eprime}
\E' := \{ S^k\cU^\vee(t), \cS_+(t) \mid 0 \le k \le n-3,\ 2 - n \le t \le n - 3 \} \subset \E.
\end{equation}
Then for any object $E \in \E\,' \,$ we have
\begin{align}
\label{kl:twist}
E \otimes \cO(1)   & \in \langle \E \rangle,
\\
\label{kl:tensor}
E \otimes \cU^\vee & \in \langle \E \rangle.
\end{align}
\end{lemma}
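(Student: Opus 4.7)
The plan is to verify both~\eqref{kl:twist} and~\eqref{kl:tensor} by case analysis on the type of $E \in \E'$, combining direct inspection of~$\E$ with the resolutions~\eqref{eq-OG-2-2n-exact-sequence-for-Sym} and Lemma~\ref{lemma-double-complex}, and with Lemma~\ref{lemma:sss}. For~\eqref{kl:twist} the argument will be immediate: every $E \in \E'$ has the form $S^k\cU^\vee(t)$ with $k \le n-3$ or $\cS_+(t)$, with twist $2-n \le t \le n-3$; since both $S^k\cU^\vee$ (for $k\le n-3$) and $\cS_+$ already lie in~$\cA$, the object $E \otimes \cO(1)$ lies in $\cA(t+1)$ or $\cB(t+1)$, both of which are contained in~$\E$.

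For~\eqref{kl:tensor} I would first dispose of $E = \cS_+(t)$: tensoring Lemma~\ref{lemma:sss} by $\cO(t)$ presents $\cS_+(t) \otimes \cU^\vee$ as the middle cohomology of a three-term complex $\cS_-(t) \to \bfS_- \otimes \cO(t+1) \to \cS_-(t+1)$ whose terms visibly lie in~$\E$. For $E = S^k\cU^\vee(t)$, the Clebsch--Gordan formula~\eqref{tp:a1*b1} splits $E \otimes \cU^\vee$ as $S^{k+1}\cU^\vee(t) \oplus S^{k-1}\cU^\vee(t+1)$ (with the $k=0$ case trivial); both summands lie in $\E$ except when $k = n-3$ and $t \leq 0$, for which the summand $S^{n-2}\cU^\vee(t)$ is absent from the~$\cA(\cdot)$ blocks. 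Everything is thus reduced to the sub-claim that $S^{n-2}\cU^\vee(t) \in \langle \E \rangle$ for every $2-n \leq t \leq 0$.

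The sub-claim will proceed in two steps. For $t \in \{2-n, \dots, -1\}$, I twist~\eqref{eq-OG-2-2n-exact-sequence-for-Sym} by $\cO(s)$ with $s := t + n - 2 \in \{0, \dots, n-3\}$. After rewriting $S^j\cU = S^j\cU^\vee(-j)$ via Corollary~\ref{corollary:duals}, a bookkeeping of twists shows that every intermediate term of the twisted complex is a sheaf $S^j\cU^\vee(r)$ with $j \leq n-3$ and $r$ in the Lefschetz range $[2-n, n-2]$, or a spinor bundle with twist in $\{0, \dots, n-3\}$; hence it lies in~$\E$. Since also the rightmost term $S^{n-2}\cU^\vee(t+n-1) \in \cB(t+n-1) \subset \E$, the leftmost term $S^{n-2}\cU^\vee(t)$ is the only unknown in the sequence and must belong to $\langle \E \rangle$. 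For the remaining value $t = 0$, twisting by $\cO(-1)$ would push the extreme-left term $S^{n-2}\cU^\vee(1-n)$ out of the Lefschetz range, so I instead apply Lemma~\ref{lemma-double-complex} with $m = n-2$: by~\eqref{eq:wedge-n-2} the unique cohomology of the totalization equals $\cU^{2\omega_{n-1}}(-1) \oplus \cU^{2\omega_n}(-1) \in \E$. Decomposing each bicomplex entry $\Lambda^{n-2-i-j}V \otimes S^i\cU \otimes S^j\cU^\vee$ via~\eqref{tp:a1*b1}, the only summands not directly in~$\E$ are $S^{n-2}\cU^\vee(-i)$ for $i = 0, 1, \dots, n-2$ (each contributed once, from the diagonal entry $i+j = n-2$ with $\alpha = 0$); the first step already provides all of these for $i \ge 1$, so the totalization forces $S^{n-2}\cU^\vee \in \langle \E \rangle$ as well. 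The main obstacle is exactly this interplay: the LES strategy alone fails at $t = 0$, and one must recognise that the ``non-rectangular'' bundles $\cU^{2\omega_{n-1}}(-1), \cU^{2\omega_n}(-1)$ distinguishing~\eqref{eq-collection-OG-even-II} from a purely rectangular collection arise as cohomology of $\Lambda^{n-2}(\cU^\perp/\cU)$, supplying the missing relation.
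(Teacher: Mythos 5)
Your proposal is correct and follows essentially the same route as the paper: the twist statement by inspection, the spinor case via Lemma~\ref{lemma:sss}, the Clebsch--Gordan reduction to $S^{n-2}\cU^\vee(t)\in\langle\E\rangle$, the twists of~\eqref{eq-OG-2-2n-exact-sequence-for-Sym} for $2-n\le t\le -1$, and the double complex of Lemma~\ref{lemma-double-complex} with $m=n-2$ together with~\eqref{eq:wedge-n-2} for $t=0$. No discrepancies to report.
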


\begin{proof}
The inclusion~\eqref{kl:twist}
follows immediately
from comparison of~\eqref{eq:eprime}, \eqref{eq:a-b-dn} and~\eqref{eq-collection-OG-even-II}.
Thus, we concentrate here on the inclusion~\eqref{kl:tensor}.

We split objects $E \in \E'$ into two classes:
\begin{itemize}
\item $\cS_+(t)$ with $t \in [2-n,n-3]$ (spinor bundles), and
\item $S^k \cU^\vee(t)$ with $k \in [0,n-3]$ and $t \in [2-n,n-3]$ (tautological bundles),
\end{itemize}
and treat these separately.

The class of spinor bundles is easy: from Lemma~\ref{lemma:sss} we conclude
\begin{equation*}
\cS_+(i) \otimes \cU^\vee \in \langle \cS_-(i), \cO(i+1), \cS_-(i+1) \rangle.
\end{equation*}
and the inclusion~\eqref{kl:tensor} for $E = {} \cS_+(i)$ follows.

The class of tautological bundles is a bit more tedious.
First, note that by~\eqref{tp:a1*b1} we have
\begin{equation*}
S^k \cU^\vee \otimes \cU^\vee = S^{k+1} \cU^\vee \oplus S^{k-1}\cU^\vee(1).
\end{equation*}
Thus, the inclusion~\eqref{kl:tensor} would follow from the inclusions
\begin{equation}
\label{eq:inclusions}
S^j\cU^\vee(t) \in \langle \E \rangle,
\qquad 0 \le j \le n -2,\quad 2 - n \le t \le n - 2.
\end{equation}
So, proving these inclusions, we will prove the lemma.

For $j \le n - 3$ and for $j = n - 2$, $1 \le t \le n - 2$ the inclusions~\eqref{eq:inclusions}
follow immediately from~\eqref{eq-collection-OG-even-II}. Furthermore, using~\eqref{eq-OG-2-2n-exact-sequence-for-Sym} twisted by $\cO(n - 2 + t)$ and isomorphisms
\begin{equation*}
S^l \cU \simeq S^l \cU^\vee(-l),
\end{equation*}
we deduce~\eqref{eq:inclusions} for $j = n - 2$ and $2 - n \le t \le -1$.

So, it only remains to show that
\begin{equation*}
S^{n-2}\cU^\vee \in  \langle \E \rangle.
\end{equation*}
For this we consider the double complex of Lemma~\ref{lemma-double-complex} with $m = n-2$.
Using the direct sum decompositions
\begin{equation*}
S^k\cU^\vee \otimes S^l\cU \cong \bigoplus_{j = 0}^{\min\{k,l\}} S^{k + l - 2j}\cU^\vee(j - l)
\end{equation*}
and the cases of the inclusion~\eqref{eq:inclusions} proved above,
we see that all the terms of the double complex with a possible exception for the rightmost term $S^{n-2}\cU^\vee$
are contained in the subcategory generated by~$\E$.
By~\eqref{eq:wedge-n-2} and~\eqref{eq:uperp-u} the unique cohomology sheaf
\begin{equation*}
\cU^{2\omega_{n-1}}(-1) \oplus \cU^{2\omega_n}(-1)
\end{equation*}
of the double complex is contained in $\langle \E \rangle$,
therefore the last term $S^{n-2}\cU^\vee$ is in the subcategory generated by~$\E$.
\end{proof}

Let
\begin{equation*}
v \in H^0(\OG(2,2n), \cU^\vee) = V^\vee \cong V
\end{equation*}
be any non-zero global section and let $v^\perp \subset V$ be the orthogonal complement of $v$.
The restriction of the bilinear form to $v^\perp$ is nondegenerate if and only if $v$ is non-isotropic.
In this case the zero locus of $v$ (considered as a section of~$\cU^\vee$)
is the odd orthogonal Grassmannian~$\OG(2,v^\perp) {} \cong \OG(2,2n-1)$ and we have the natural closed embedding
\begin{equation*}
i_v \colon \OG(2,v^\perp) \to \OG(2,V)
\end{equation*}
and the Koszul resolution
\begin{equation}
\label{eq-koszul-resolution}
0 \to \cO(-1) \to \cU \to \cO \to i_{v*} \cO_{\OG(2,v^\perp)} \to 0.
\end{equation}
The union of $\OG(2,v^\perp)$ for non-isotropic $v \in V$ sweeps~$\OG(2,V)$, hence we have the following

\begin{lemma}[{\cite[Lemma 4.5]{Ku08a}}]
\label{lemma:restriction}
If for an object $F \in \Db(\OG(2,2n))$ the restrictions $i_v^* F$ vanish for all non-isotropic $v \in V$, then $F = 0$.
\end{lemma}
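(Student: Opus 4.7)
The plan is to combine a simple geometric covering statement with Nakayama's lemma applied to the top cohomology sheaf of $F$. The geometric input, tacitly used just before the statement of the lemma, is that every point of $\OG(2,V)$ lies in $\OG(2,v^\perp)$ for some non-isotropic $v$, so I would verify this first. A point $x \in \OG(2,V)$ corresponds to an isotropic 2-plane $U \subset V$, and the condition $x \in \OG(2,v^\perp)$ is equivalent to $v \in U^\perp$. Since $U$ is isotropic, the restriction of the bilinear form to $U^\perp$ has radical exactly $U$, and the induced form on $U^\perp/U$ (of rank $2n - 4 \geq 4$) is nondegenerate; consequently the set of non-isotropic vectors in~$U^\perp$ is a non-empty open subset, and the required $v$ exists.

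Next, I would argue by descending induction on the top cohomological degree of $F$. Suppose $F \neq 0$ and let $b$ be the largest integer with $\cH^b(F) \neq 0$. Applying $Li_v^*$ to the distinguished triangle
\begin{equation*}
\tau_{\leq b-1} F \to F \to \cH^b(F)[-b]
\end{equation*}
and using right t-exactness of the derived pullback shows that the top cohomology sheaf of $i_v^* F$ coincides with $i_v^* \cH^b(F)$; here I implicitly rely on the fact that $\OG(2,v^\perp) \subset \OG(2,V)$ is a regular embedding of codimension~$2$, resolved by the Koszul complex~\eqref{eq-koszul-resolution}, so that bounded complexes stay bounded under $Li_v^*$. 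The hypothesis $i_v^* F = 0$ therefore forces $i_v^* \cH^b(F) = 0$ for every non-isotropic $v$.

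Finally, I would invoke Nakayama's lemma. Picking any closed point $x$ in the support of the coherent sheaf $\cH^b(F)$, corresponding to some isotropic plane~$U$, and any non-isotropic $v \in U^\perp$ as in the first step, one has $x \in Z := \OG(2,v^\perp)$ and the stalk identification
\begin{equation*}
(i_v^* \cH^b(F))_x \,=\, \cH^b(F)_x \otimes_{\cO_{X,x}} \cO_{Z,x} \,=\, \cH^b(F)_x \,/\, I_{Z,x} \cdot \cH^b(F)_x \,=\, 0,
\end{equation*}
which gives $\cH^b(F)_x = I_{Z,x}\cdot\cH^b(F)_x$ with $I_{Z,x}\subset\mathfrak{m}_x$. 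Since $\cH^b(F)_x$ is a finitely generated $\cO_{X,x}$-module, Nakayama's lemma then yields $\cH^b(F)_x = 0$, contradicting the choice of $x$; hence $\cH^b(F) = 0$ and $F=0$ follows by induction on cohomological amplitude. There is no really hard step here: the geometric covering is immediate, the derived pullback is tame along the regular embedding~$i_v$, and Nakayama is standard; the only point requiring some care is the identification of the top cohomology sheaf of $Li_v^* F$ with $i_v^* \cH^b(F)$.
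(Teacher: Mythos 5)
Your argument is correct and is essentially the standard proof of the cited result \cite[Lemma~4.5]{Ku08a} (the paper itself only quotes that lemma): the subvarieties $\OG(2,v^\perp)$ for non-isotropic $v$ cover $\OG(2,V)$ because the form on $U^\perp/U$ is nondegenerate of rank $2n-4>0$, right t-exactness of $Li_v^*$ identifies the top cohomology sheaf of $Li_v^*F$ with $i_v^*\cH^b(F)$, and Nakayama kills the stalks of $\cH^b(F)$. Nothing is missing.
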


Now we are finally ready to prove the fullness of~\eqref{eq-collection-OG-even-II}.

\begin{proposition}
\label{lemma-vanishing-of-the-restriction}
If $F \in \Db(\OG(2,V))$ is right orthogonal to all the vector bundles $E$ in the collection~\eqref{eq-collection-OG-even-II},
i.e., $\Ext^\bullet(E,F) = 0$, then $F = 0$.
\end{proposition}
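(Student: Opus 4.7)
My plan is to follow the standard restriction strategy of~\cite{Ku08a}. By Lemma~\ref{lemma:restriction}, it suffices to show that $i_v^* F = 0$ in $\Db(\OG(2, v^\perp))$ for every non-isotropic $v \in V$; note that $\OG(2, v^\perp) \cong \OG(2, 2n-1)$ carries the full rectangular Lefschetz exceptional collection of~\cite[Theorem~7.1]{Ku08a}, with basis $\langle \cO, \cU^\vee, \dots, S^{n-3}\cU^\vee, \cS\rangle$ (where $\cS$ is the unique spinor bundle of the odd Grassmannian) and $2n-3$ consecutive twists. Hence my goal reduces to showing that $i_v^* F$ is right orthogonal to every object of this collection.

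The main tool is the combination of the adjunction $(i_v^*, i_{v*})$ with the Koszul resolution~\eqref{eq-koszul-resolution}, which identifies $i_{v*}\cO_{\OG(2, v^\perp)}$ with the three-term complex $[\cO(-1) \to \cU \to \cO]$. For any bundle $E$ on $\OG(2,V)$, the projection formula and adjunction give
\begin{equation*}
\RHom_{\OG(2, v^\perp)}(i_v^* E,\, i_v^* F) \cong \RHom_{\OG(2,V)}\!\bigl(E,\, F \otimes [\cO(-1) \to \cU \to \cO]\bigr),
\end{equation*}
which is assembled via iterated cones from $\RHom(E, F)$, $\RHom(E \otimes \cU^\vee, F)$, and $\RHom(E(1), F)$. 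For $E \in \E'$, Lemma~\ref{key-lemma} places all three of $E$, $E \otimes \cU^\vee$, and $E(1)$ in $\langle \E \rangle$, so the hypothesis on $F$ forces each $\RHom$ to vanish. Therefore $i_v^* F$ is right orthogonal to $i_v^* E$ for every $E \in \E'$.

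Under $i_v^*$ the powers $S^k \cU^\vee$ restrict to their namesakes on the odd Grassmannian, and both spinor bundles $\cS_{\pm}$ restrict (up to a line-bundle twist) to the unique spinor bundle $\cS$ of $\OG(2, v^\perp)$. Consequently the restricted family $\{i_v^* E : E \in \E'\}$ already contains every basis bundle of Kuznetsov's collection, twisted by the $2n-4$ consecutive values $t \in [2-n, n-3]$. To reach the full twist range $[0, 2n-4]$ of Kuznetsov's collection, I plan to iterate the Koszul-adjunction calculation: since by Lemma~\ref{key-lemma} the objects $E \otimes \cO(1)$ for $E \in \E'$ also lie in $\langle \E \rangle$, the sheaf $F$ is right orthogonal to these shifted bundles, and a second application of the same argument extends the vanishing to the missing twist. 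Once $i_v^* F$ is shown to be right orthogonal to a full exceptional collection on $\OG(2, v^\perp)$, fullness forces $i_v^* F = 0$, and Lemma~\ref{lemma:restriction} concludes $F = 0$.

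The only genuinely delicate point, which I expect to be the main obstacle, is this bridging step: one must track precisely which tensor products and twists produced by a second pass of the Koszul argument still lie in $\langle \E \rangle$, keeping careful account of the transition from block $\cA$ to block $\cB$ (which provides $S^{n-2}\cU^\vee$ in the last twists) and of the ``Lefschetz tail'' $\cU^{2\omega_{n-1}}(-1) \oplus \cU^{2\omega_n}(-1)$ sitting at $t = -1$. Once this accounting is done, all the remaining ingredients — adjunction, the Koszul complex, and Lemma~\ref{key-lemma} — combine directly to give the desired right-orthogonality on $\OG(2, v^\perp)$, and the proposition follows.
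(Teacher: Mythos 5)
Your strategy is exactly the paper's: restrict to $\OG(2,v^\perp)$ via Lemma~\ref{lemma:restriction}, use the Koszul resolution~\eqref{eq-koszul-resolution} tensored with $E^\vee\otimes F$ together with Lemma~\ref{key-lemma} to show $\Ext^\bullet(i_v^*E,i_v^*F)=0$ for all $E\in\E'$, and then invoke fullness of Kuznetsov's collection on the odd Grassmannian. That core computation is correct as you set it up (your adjunction/projection-formula phrasing is equivalent to tensoring the four-term Koszul sequence by $E^\vee\otimes F$ and taking cohomology).

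However, the ``genuinely delicate bridging step'' you defer to — extending the twist range by a second pass of the Koszul argument — is a phantom problem caused by a miscount, and since you leave it unexecuted your write-up is incomplete precisely there. The decomposition of \cite[Theorem~7.1]{Ku08a} for $\OG(2,2n-1)$ is $\langle \cC,\cC(1),\dots,\cC(2n-5)\rangle$, i.e.\ it has $2n-4$ blocks with twists in $[0,2n-5]$, not $[0,2n-4]$: the index of $\OG(2,2n-1)$ is $2n-4$. Twisting this decomposition by $\cO(2-n)$ gives blocks with twists exactly in $[2-n,n-3]$, which is precisely the twist range appearing in the definition~\eqref{eq:eprime} of $\E'$ (also $2n-4$ consecutive values). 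Moreover $\cC$ is generated by $i_v^*\cO,\dots,i_v^*(S^{n-3}\cU^\vee),i_v^*(\cS_+)$, which are exactly the restrictions of the $t=0$ members of $\E'$ (only $\cS_+$ is needed; $\cS_-$ plays no role here). So the objects $i_v^*E$, $E\in\E'$, already generate all of $\Db(\OG(2,v^\perp))$, no iteration is required, and the proof closes immediately: $i_v^*F=0$ for every non-isotropic $v$, hence $F=0$.
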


\begin{proof}
The assumption of the proposition can be rewritten as
\begin{equation}
\label{eq-orthogonality-to-all-objects-in-E}
\Ext^\bullet(E,F) = H^{\bullet}(\OG(2,V),E^{\vee} \otimes F)=0 \qquad \forall E \in \E.
\end{equation}
Now take any non-isotropic vector~$v$ and any bundle $E \in \E'$, and tensor \eqref{eq-koszul-resolution} by $E^{\vee} \otimes F$.
We obtain an exact sequence
\begin{equation}
\label{eq-koszul-resolution-tensored}
0 \to \cO(-1) \otimes E^{\vee} \otimes F \to \cU \otimes E^{\vee} \otimes F \to E^{\vee} \otimes F \to i_{v*} i_v^* (E^{\vee} \otimes F) \to 0
\end{equation}
From Lemma~\ref{key-lemma} and~\eqref{eq-orthogonality-to-all-objects-in-E}
it follows that the cohomology groups of the first three terms of this complex vanish.
Hence, also the cohomology of $i_{v*} i_v^* (E^{\vee} \otimes F)$ has to vanish, and we conclude that
\begin{equation}
\label{eq:vanishing}
H^{\bullet} (\OG(2,V), i_{v*} i_v^* (E^{\vee} \otimes F)) =
H^{\bullet} (\OG(2,v^\perp), i_v^* (E^{\vee} \otimes F)) = \Ext^\bullet(i_v^* E, i_v^* F) = 0
\end{equation}
for any $E \in \E'$.
In other words, the object $i_v^*F$ is orthogonal to all objects $i_v^*E$ for $E \in \E'$.

Set
\begin{equation*}
\cC := \langle \cO, i_v^*(\cU^\vee), \dots, i_v^*(S^{n-3}\cU^\vee), i_v^*(\cS_+) \rangle \subset \Db(\OG(2,v^\perp)).
\end{equation*}
Since $i_v^*(\cU^\vee)$ is the dual tautological bundle and $i_v^*(\cS_+)$ is the spinor bundle
on the odd orthogonal Grassmannian~$\OG(2,v^\perp) \cong \OG(2,2n-1)$,
it follows from~\cite[Theorem 7.1]{Ku08a} that there is a semiorthogonal decomposition
\begin{equation}
\label{eq-collection-OG-odd-I}
\Db(\OG(2,2n-1)) = \langle \cC , \cC(1) , \dots , \cC(2n-5) \rangle.
\end{equation}
Twisting it by~$\cO(2 - n)$ we obtain a semiorthogonal decomposition
\begin{equation}
\label{eq-collection-OG-odd-II}
\Db(\OG(2,2n-1)) = \langle \cC(2-n), \dots, \cC(-1), \cC , \cC(1) , \dots , \cC(n-3) \rangle.
\end{equation}
Thus, the definition of the set~$\E'$ implies that the objects $i_v^*E$ for $E \in \E'$ generate the category~$\Db(\OG(2,v^\perp))$,
hence~\eqref{eq:vanishing} implies that $i_v^*F = 0$.
Since this holds for any non-isotropic~$v$, it follows from Lemma~\ref{lemma:restriction} that $F = 0$.
\end{proof}

\subsection{Residual category}
\label{subsection:residual-dn}

For $1 \le i \le n - 2$ we define $F_i$ to be the stupid right truncation of complex~\eqref{eq-OG-2-2n-exact-sequence-for-Sym}
after $n-1-i$ terms (counting from the left). In other words, the objects~$F_i$ are defined by the following exact sequences:
\begin{equation}
\label{eq-left-complexes-Fi}\begin{split}
0 \to S^{n-2}&\cU^\vee(2-n)  \to F_{n-2} \to 0 \\
0 \to S^{n-2}&\cU^\vee(2-n) \to W_1 \otimes S^{n-3}\cU^\vee(3-n) \to F_{n-3} \to 0 \\
  & \vdots \\
0 \to S^{n-2}&\cU^\vee(2-n) \to W_1 \otimes S^{n-3}\cU^\vee(3-n) \to \dots \to W_{n-3} \otimes \cU^\vee(-1) \to F_1 \to 0.
\end{split}
\end{equation}
Since~\eqref{eq-OG-2-2n-exact-sequence-for-Sym} is exact, the objects $F_i$ are also quasiisomorphic to the stupid left truncations
of~\eqref{eq-OG-2-2n-exact-sequence-for-Sym}; in other words, we also have exact sequences
\begin{equation}
\label{eq-right-complexes-Fi}
\begin{split}
0 \to &F_{n-2} \to  W_1 \otimes S^{n-3}\cU^\vee(3-n) \to  \dots \to W_1 \otimes S^{n-3} \cU^\vee(1) \to S^{n-2} \cU^\vee(1) \to 0 \\
0 \to &F_{n-3} \to  W_2 \otimes S^{n-4}\cU^\vee(4-n) \to  \dots \to W_1 \otimes S^{n-3} \cU^\vee(1) \to S^{n-2} \cU^\vee(1) \to 0 \\
& \ \vdots \\
0 \to &F_1 \to W_{n-2} \otimes \cO \to  \dots \to W_1 \otimes S^{n-3} \cU^\vee(1) \to S^{n-2} \cU^\vee(1) \to 0.
\end{split}
\end{equation}

We will check that the objects $F_1,\dots,F_{n-2}$ together with the objects $\cU^{2\omega_{n-1}}(-1)$, $\cU^{2\omega_{n}}(-1)$
form a full exceptional collection in the residual category.
We denote by~$\bL$ the left mutation functors.

\begin{lemma}
We have
\begin{align*}
\bL_{\langle \cA, \cA(1), \dots , \cA(i) \rangle} \big( S^{n-2} \cU^\vee(i) \big) \cong F_i (i-1) [n-1+i].
\end{align*}
\end{lemma}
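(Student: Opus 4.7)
The strategy is to produce a distinguished triangle
\[
P \to S^{n-2}\cU^\vee(i) \to F_i(i-1)[n-1+i]
\]
with $P \in \langle \cA, \cA(1), \dots, \cA(i)\rangle$ and $F_i(i-1) \in \langle \cA, \cA(1), \dots, \cA(i)\rangle^\perp$; this characterizes $F_i(i-1)[n-1+i]$ as the claimed left mutation. I will build this triangle from the exact sequence~\eqref{eq-right-complexes-Fi} for $F_i$, twisted by $\cO(i-1)$, which takes the form
\[
0 \to F_i(i-1) \to Y_1 \to Y_2 \to \dots \to Y_{n-1+i} \to S^{n-2}\cU^\vee(i) \to 0.
\]
Set $m := n-1+i$. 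The complex $Y^\bullet := [Y_1 \to \dots \to Y_m]$ placed in degrees $0, \dots, m-1$ has only two nonzero cohomology sheaves, namely $F_i(i-1)$ in degree~$0$ and $S^{n-2}\cU^\vee(i)$ in degree~$m-1$, and the truncation triangle (after rotation) becomes $Y^\bullet[m-1] \to S^{n-2}\cU^\vee(i) \to F_i(i-1)[m]$. It then suffices to verify (a)~each~$Y_j$ lies in $\langle \cA, \cA(1), \dots, \cA(i)\rangle$, and (b)~$F_i(i-1) \in \langle \cA, \cA(1), \dots, \cA(i)\rangle^\perp$.

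For~(a), the $Y_j$ are the $\cO(i-1)$-twists of the terms $C_{n-1-i}, C_{n-i}, \dots, C_{2n-3}$ of~\eqref{eq-OG-2-2n-exact-sequence-for-Sym}. Using the identification $S^k\cU \cong S^k\cU^\vee(-k)$ (which follows from $\det\cU \cong \cO(-1)$), the first-half terms rewrite as $W_j \otimes S^{n-2-j}\cU^\vee(j+i-n+1) \in \cA(j+i-n+1)$, with twists in $[0, i-1]$ and symmetric-power exponents at most $i-1 \leq n-3$; the middle spinor term $\bfS_+ \otimes \cS_+(i-1) \oplus \bfS_- \otimes \cS_-(i-1)$ lies in $\cA(i-1)$; and the remaining second-half terms, all having symmetric-power exponent at most $n-3$, lie in $\cA(i)$.

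For~(b), I use the $\cO(i-1)$-twist of the left complex~\eqref{eq-left-complexes-Fi}, which exhibits $F_i(i-1)$ as the final cokernel in a sequence with terms $W_j \otimes S^{n-2-j}\cU^\vee(j+i-n+1)$, $j = 0, \dots, n-2-i$. For $j \geq 1$ these lie in $\cA(j+i-n+1)$ with negative twists in $[i-n+2, -1]$, and $\Hom^\bullet(\cA(t), -) = 0$ for $t \in [0, i]$ follows immediately from the exceptionality of the rectangular Lefschetz collection $\cA, \cA(1), \dots, \cA(2n-4)$. The delicate case is $j = 0$: the term $S^{n-2}\cU^\vee(i-n+1)$ is not in the Lefschetz collection, so I check vanishing against each generator of $\cA(t)$ individually. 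For $c = S^k\cU^\vee(t)$ with $k \leq n-3$, rewrite
\[
\Hom^\bullet(c, S^{n-2}\cU^\vee(i-n+1)) = \Hom^\bullet(S^k\cU^\vee(t+n-1-i), S^{n-2}\cU^\vee)
\]
and apply Lemma~\ref{lemma:so1}, whose non-vanishing case requires source symmetric-power exponent $n-2$ (whereas here $k \leq n-3$). For $c = \cS_\pm(t)$, the analogous rewrite reduces the vanishing to Lemma~\ref{lemma:so3}.

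The main obstacle is precisely this last orthogonality check for the ``non-rectangular'' term $S^{n-2}\cU^\vee(i-n+1)$; the remainder of the argument is index bookkeeping within~\eqref{eq-OG-2-2n-exact-sequence-for-Sym} and routine appeal to the exceptionality established by Lemmas~\ref{lemma:so1}--\ref{lemma:so4}.
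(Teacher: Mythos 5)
Your proof is correct and follows essentially the same route as the paper: the cone of the map $S^{n-2}\cU^\vee(i)\to F_i(i-1)[n-1+i]$ is identified with the (shifted) subcomplex of middle terms of~\eqref{eq-right-complexes-Fi} twisted by~$\cO(i-1)$, and the orthogonality of $F_i(i-1)$ to $\langle\cA,\dots,\cA(i)\rangle$ is read off from~\eqref{eq-left-complexes-Fi}. The only difference is that where the paper simply invokes semiorthogonality of~\eqref{eq-collection-OG-even-I}, you unwind the delicate leftmost term $S^{n-2}\cU^\vee(i+1-n)\in\cB(i+1-n)$ and verify its orthogonality directly via Lemmas~\ref{lemma:so1} and~\ref{lemma:so3}, which is a correct, more explicit rendering of the same step.
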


\begin{proof}
To prove the claim it is enough to show the following two facts:
\begin{itemize}
\item
the object $F_i (i-1) [n-1+i]$ is contained in the orthogonal $\langle \cA, \cA(1), \dots , \cA(i) \rangle^\perp$;
\item
there exists a morphism $S^{n-2} \cU^\vee(i) \to F_i(i-1)[n-1+i]$,
whose cone lies in the category~\mbox{$\langle \cA, \cA(1), \dots , \cA(i) \rangle$}.
\end{itemize}

Twisting~\eqref{eq-left-complexes-Fi} by $\cO(i-1)$ and using semiorthogonality of~\eqref{eq-collection-OG-even-I},
we deduce the first of these facts.
On the other hand, considering~\eqref{eq-right-complexes-Fi} twisted by~$\cO(i-1)$
as a Yoneda extension of~$S^{n-2} \cU^\vee(i)$ by $F_i(i-1)$ of length $n-1+i$,
i.e., as a morphism $S^{n-2} \cU^\vee(i) \to F_i(i-1)[n-1+i]$,
we conclude that its cone is quasiisomorphic to the subcomplex of middle terms,
hence belongs to the subcategory $\langle \cA, \cA(1), \dots , \cA(i) \rangle$.
This proves the second fact.
\end{proof}

Thus, we have the following description for the residual category
\begin{equation}
\label{eq:residual-dn}
\cR = \Big\langle \cU^{2\omega_{n-1}}(-1), \cU^{2\omega_{n}}(-1) , F_1 , F_2(1), \dots , F_{n-2}(n-3) \Big\rangle.
\end{equation}

\begin{remark}
\label{remark:cr-dn-aut}
For $n \ne 4$ the exceptional collection in~\eqref{eq:residual-dn} is $\Aut(\OG(2,2n))$-invariant.
For~$n = 4$ it takes form $S^2\cS_-(-1), S^2\cS_+(-1), (V \otimes \cU)/S^2\cU, S^2\cU^\vee(-1)$.
Mutating the third object to the right we obtain an $\Aut(\OG(2,8))$-invariant exceptional collection
\begin{equation*}
\cR = \langle S^2\cS_-(-1), S^2\cS_+(-1), S^2\cU^\vee(-1), \tilde{T}(-1) \rangle,
\end{equation*}
where the bundle $\tilde T$ is defined by the exact sequence $0 \to \cO \to \tilde T \to T \to 0$ and $T$ is the tangent bundle.
\end{remark}

It remains to compute the $\Ext$-spaces between the objects of~\eqref{eq:residual-dn}.

\begin{lemma}
We have
\begin{equation*}
\Ext^\bullet(F_i(i-1), F_j(j-1)) =
\begin{cases}
\Bbbk,  		& \text{if $j = i + 1$ or $j = i$,}  \\
0 , 			& \text{otherwise.}
\end{cases}
\end{equation*}
Furthermore, we have
\begin{equation*}
\Ext^\bullet(\cU^{2\omega_{n-1}}(-1), F_j(j-1)) =
\Ext^\bullet(\cU^{2\omega_{n}}(-1), F_j(j-1)) =
\begin{cases}
\Bbbk[-1] ,  	& \text{if $j = 1$,} \\
0 , 		& \text{otherwise.}
\end{cases}
\end{equation*}
\end{lemma}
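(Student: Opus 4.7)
The strategy is to reduce all the required Ext-computations to vanishings established in Section~\ref{subsection:exceptional-dn}. The main reduction uses the mutation identification
$$F_i(i-1)[n+i-1] \cong \bL_{\langle \cA, \cA(1), \ldots, \cA(i)\rangle}(S^{n-2}\cU^\vee(i))$$
from the preceding lemma. Since every generator in~\eqref{eq:residual-dn} lies in $\cR$ and is therefore right-orthogonal to $\cA, \cA(1), \dots, \cA(2n-4)$, left-mutation through $\cA, \cA(1), \dots, \cA(i)$ leaves $\RHom$ into such a generator unchanged, giving
$$\Ext^m(F_i(i-1), F_j(j-1)) \cong \Ext^{m+n+i-1}(S^{n-2}\cU^\vee(i), F_j(j-1)),$$
and analogously with $F_j(j-1)$ replaced by $\cU^{2\omega_\pm}(-1)$ on the left slot.

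Having performed this reduction, I would resolve $F_j(j-1)$ by the twist of~\eqref{eq-right-complexes-Fi} by $\cO(j-1)$ and analyze the resulting hyper-Ext spectral sequence. The terms of this resolution fall into three classes: $W_a \otimes S^b \cU^\vee(c)$ with $0 \le b \le n-3$, the central spinor term $\bfS_+ \otimes \cS_+(j-1) \oplus \bfS_- \otimes \cS_-(j-1)$, and the boundary term $S^{n-2}\cU^\vee(j)$. The Ext from $S^{n-2}\cU^\vee(i)$ into each type is controlled by Lemmas~\ref{lemma:so1}, \ref{lemma:so3} and~\ref{lemma:so1} respectively, and a direct analysis of twists and symmetric-power indices should show that almost all $E_1$-entries vanish. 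The surviving entry collapses to a single $\Bbbk$-contribution in the correct degree, appearing precisely when $j \in \{i, i+1\}$. For $j < i$, the analogous vanishing follows either by a parallel argument using the left resolution~\eqref{eq-left-complexes-Fi}, or by Serre duality (using $\omega_X \cong \cO(3-2n)$).

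For $\Ext^\bullet(\cU^{2\omega_\pm}(-1), F_j(j-1))$, the same recipe applies: resolve $F_j(j-1)$ by~\eqref{eq-right-complexes-Fi} and evaluate each $E_1$-entry using the semiorthogonality statements from Lemma~\ref{lemma:so2} for the spinor middle term and Lemma~\ref{lemma:so4} for the symmetric-power terms. The only surviving contribution appears for $j = 1$, coming from the central spinor term paired with $\cU^{2\omega_\pm}(-1)$, and it sits in cohomological degree $1$, producing the claimed $\Bbbk[-1]$. The main obstacle in the entire argument is the combinatorial bookkeeping of the spectral sequence: many terms of the resolution contribute non-trivially at $E_1$, and one must verify that the higher differentials annihilate all but one entry and locate the survivor in the correct cohomological degree. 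Once this is done, the resulting configuration of Ext-spaces matches that of projective modules in the derived category of the Dynkin quiver $\rD_n$, completing the identification $\cR \cong \Db(\rD_n)$.
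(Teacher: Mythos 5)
Your first reduction (replacing $F_i(i-1)$ by $S^{n-2}\cU^\vee(i)$ up to shift, using that $F_j(j-1)\in\cR$ is right-orthogonal to the $\cA$-blocks) is valid and is implicitly what the paper does, apart from a sign slip in the shift: since $F_i(i-1)[n-1+i]\cong\bL(S^{n-2}\cU^\vee(i))$, you get $\Ext^m(F_i(i-1),F_j(j-1))\cong\Ext^{m-(n+i-1)}(S^{n-2}\cU^\vee(i),F_j(j-1))$, not $\Ext^{m+n+i-1}$. The genuine gap is in the second step, where you resolve the \emph{target} $F_j(j-1)$ by the right resolution~\eqref{eq-right-complexes-Fi}. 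That resolution places the terms of $F_j(j-1)$ in the twists $\cA,\cA(1),\dots,\cA(j-1),\cB(j)$, i.e.\ in twists comparable to or larger than that of the source $S^{n-2}\cU^\vee(i)$ (resp.\ of $\cU^{2\omega_\pm}(-1)$). Homs from an earlier to a later block of an exceptional collection are not controlled by semiorthogonality, and the required groups fall outside the ranges covered by Lemmas~\ref{lemma:so1}--\ref{lemma:so4}: for instance $\Ext^\bullet(S^{n-2}\cU^\vee(i),\cS_\pm(j-1))$ for $j>i$ reduces by Serre duality to a twist $\ge 2n-3$, outside the range of Lemma~\ref{lemma:so3}, and needs a fresh Borel--Bott--Weil computation. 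Worse, for the second claim with $j=1$ one checks by Borel--Bott--Weil that $\Ext^\bullet(\cU^{2\omega_\pm}(-1),S^{n-2}\cU^\vee(1))\ne 0$, so at least two $E_1$-entries of your spectral sequence survive (this boundary term and, as you predict, possibly the spinor term), and the "single survivor" conclusion does not hold; you would have to analyze differentials, and your identification of the survivor as the spinor term is not justified.

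The fix, which is the paper's actual argument, is to resolve the target $F_j(j-1)$ by the \emph{left} resolution~\eqref{eq-left-complexes-Fi} instead, which places its terms in $\cB(j-n+1),\cA(j-n+2),\dots,\cA(-1)$, i.e.\ in strictly negative twists. Then every pairing with the source (which sits in $\cB(i)$ with $i\ge 1$, resp.\ is $\cU^{2\omega_\pm}(-1)$) vanishes by exceptionality of~\eqref{eq-collection-OG-even-I} (equivalently~\eqref{eq-collection-OG-even-II}), except for the single pairing against $S^{n-2}\cU^\vee(j-n+1)$, which is exactly what Lemma~\ref{lemma:so1} (resp.\ Lemma~\ref{lemma:so4} plus Serre duality) computes. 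With only one surviving $E_1$-entry there is no differential to worry about and the degree comes out automatically. As written, your argument does not close.
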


\begin{proof}
We start with the first claim.
Since we already know that~\eqref{eq:residual-dn} is an exceptional collection, we may assume $i < j$.
From~\eqref{eq-right-complexes-Fi} twisted by $\cO(i-1)$ we deduce
\begin{equation*}
F_i(i-1) \in \langle \cA , \cA(1) , \dots , \cA(i-1), \cB(i) \rangle
\end{equation*}
and from~\eqref{eq-left-complexes-Fi} twisted by $\cO(j-1)$ and with $i$ replaced by $j$ we deduce
\begin{equation}
\label{eq:fj-second}
F_j(j-1) \in \langle \cB(j-n+1) , \cA(j-n+2) , \dots , \cA(-1) \rangle.
\end{equation}
Using appropriate twist of \eqref{eq-collection-OG-even-I} one easily sees that non-trivial $\Ext$'s can only come from
\begin{equation*}
\Ext^\bullet(S^{n-2} \cU^\vee(i), S^{n-2} \cU^\vee(j-n+1)).
\end{equation*}
By Lemma~\ref{lemma:so1} this space is nonzero (and is isomorphic to $\Bbbk[4-2n]$)
only if $j = i + 1$ and the claim follows.

For the second claim, using the inclusion~\eqref{eq:fj-second}, we see that non-trivial $\Ext$'s can only come from
\begin{align*}
\Ext^\bullet(\cU^{2\omega_{\varepsilon}}(-1) , S^{n-2} \cU^\vee(i-n+1)),
\end{align*}
where $\varepsilon \in \{n-1, n\}$. By Lemma~\ref{lemma:so4} and Serre duality this space is nonzero (and is isomorphic to $\Bbbk[2-n]$) only if $i = 1$, and the claim follows.
\end{proof}

The above lemma shows that the residual category is equivalent to the derived category of representations
of a Dynkin quiver of type~$\rD_n$ and Remark~\ref{remark:cr-dn-aut} shows that it is generated by an exceptional
collection invariant under all automorphisms of~$\OG(2,2n)$.
This completes the proof of Theorem~\ref{theorem:dn}.

\begin{bibdiv}
\begin{biblist}

\bib{BS}{article}{
      author={Belmans, P.},
      author={Smirnov, M.},
       title={The {H}ochschild cohomology of generalised {G}rassmannians},
      eprint={https://arxiv.org/abs/1911.09414},
}

\bib{BKS}{article}{
      author={Belmans, Pieter},
      author={Kuznetsov, Alexander},
      author={Smirnov, Maxim},
       title={Derived categories of the {C}ayley plane and the coadjoint
  {G}rassmannian of type~$\mathrm{F}$},
      eprint={https://arxiv.org/abs/2005.01989},
}

\bib{Bott}{article}{
      author={Bott, Raoul},
       title={Homogeneous vector bundles},
        date={1957},
     journal={Ann. of Math. (2)},
      volume={66},
       pages={203\ndash 248},
}

\bib{BKT09}{article}{
      author={Buch, Anders~Skovsted},
      author={Kresch, Andrew},
      author={Tamvakis, Harry},
       title={Quantum {P}ieri rules for isotropic {G}rassmannians},
        date={2009},
     journal={Invent. Math.},
      volume={178},
      number={2},
       pages={345\ndash 405},
}

\bib{ChPe}{article}{
      author={Chaput, P.~E.},
      author={Perrin, N.},
       title={On the quantum cohomology of adjoint varieties},
        date={2011},
     journal={Proc. Lond. Math. Soc. (3)},
      volume={103},
      number={2},
       pages={294\ndash 330},
}

\bib{CF99}{article}{
      author={Ciocan-Fontanine, Ionu\c{t}},
       title={On quantum cohomology rings of partial flag varieties},
        date={1999},
     journal={Duke Math. J.},
      volume={98},
      number={3},
       pages={485\ndash 524},
}

\bib{CMKMPS}{article}{
      author={Cruz~Morales, J.A.},
      author={Kuznetsov, A.},
      author={Mellit, A.},
      author={Perrin, N.},
      author={Smirnov, M.},
       title={On quantum cohomology of {Grassmannian}s of isotropic lines,
  unfoldings of {$A_n$}-singularities, and {Lefschetz} exceptional
  collections},
        date={2019},
     journal={Annales de l'Institut Fourier},
      volume={69},
      number={3},
       pages={955\ndash 991},
}

\bib{Du}{inproceedings}{
      author={Dubrovin, Boris},
       title={Geometry and analytic theory of {F}robenius manifolds},
        date={1998},
   booktitle={Proceedings of the {I}nternational {C}ongress of
  {M}athematicians, {V}ol. {II} ({B}erlin, 1998)},
       pages={315\ndash 326},
}

\bib{FP}{incollection}{
      author={Fulton, W.},
      author={Pandharipande, R.},
       title={Notes on stable maps and quantum cohomology},
        date={1997},
   booktitle={Algebraic geometry---{S}anta {C}ruz 1995},
      series={Proc. Sympos. Pure Math.},
      volume={62},
   publisher={Amer. Math. Soc., Providence, RI},
       pages={45\ndash 96},
}

\bib{GMS}{article}{
      author={Galkin, S.},
      author={Mellit, A.},
      author={Smirnov, M.},
       title={Dubrovin's conjecture for {$IG(2,6)$}},
        date={2015},
     journal={Int. Math. Res. Not.},
      volume={2015},
      number={18},
       pages={8847\ndash 8859},
}

\bib{HeMaTe}{article}{
      author={Hertling, C.},
      author={Manin, Yu.~I.},
      author={Teleman, C.},
       title={An update on semisimple quantum cohomology and {$F$}-manifolds},
        date={2009},
        ISSN={0371-9685},
     journal={Trudy Matematicheskogo Instituta Imeni V. A. Steklova. Rossi\u\i
  skaya Akademiya Nauk},
      volume={264},
      number={Mnogomernaya Algebraicheskaya Geometriya},
       pages={69\ndash 76},
      review={\MR{2590836}},
}

\bib{Hertling}{book}{
      author={Hertling, Claus},
       title={Frobenius manifolds and moduli spaces for singularities},
      series={Cambridge Tracts in Mathematics},
   publisher={Cambridge University Press, Cambridge},
        date={2002},
      volume={151},
}

\bib{IK}{article}{
      author={Ingalls, Colin},
      author={Kuznetsov, Alexander},
       title={On nodal {E}nriques surfaces and quartic double solids},
        date={2015},
        ISSN={0025-5831},
     journal={Math. Ann.},
      volume={361},
      number={1-2},
       pages={107\ndash 133},
         url={https://doi.org/10.1007/s00208-014-1066-y},
      review={\MR{3302614}},
}

\bib{Kaufmann}{article}{
      author={Kaufmann, Ralph},
       title={The intersection form in {${H}^*(\overline{M}_{0n})$} and the
  explicit {K}\"{u}nneth formula in quantum cohomology},
        date={1996},
     journal={Internat. Math. Res. Notices},
      number={19},
       pages={929\ndash 952},
}

\bib{Ke}{article}{
      author={Ke, Hua-Zhong},
       title={On semisimplicity of quantum cohomology of
  {$\mathbb{P}^1$}-orbifolds},
        date={2019},
     journal={J. Geom. Phys.},
      volume={144},
       pages={1\ndash 14},
}

\bib{Kim}{article}{
      author={Kim, Bumsig},
       title={Quantum cohomology of partial flag manifolds and a residue
  formula for their intersection pairings},
        date={1995},
     journal={Internat. Math. Res. Notices},
      number={1},
       pages={1\ndash 15},
}

\bib{KMK}{article}{
      author={Kontsevich, M.},
      author={Manin, Yu.},
       title={Quantum cohomology of a product},
        date={1996},
     journal={Invent. Math.},
      volume={124},
      number={1-3},
       pages={313\ndash 339},
        note={With an appendix by R. Kaufmann},
}

\bib{K06}{article}{
      author={Kuznetsov, A.~G.},
       title={Hyperplane sections and derived categories},
        date={2006},
     journal={Izv. Ross. Akad. Nauk Ser. Mat.},
      volume={70},
      number={3},
       pages={23\ndash 128},
}

\bib{Ku08a}{article}{
      author={Kuznetsov, Alexander},
       title={Exceptional collections for {G}rassmannians of isotropic lines},
        date={2008},
     journal={Proceedings of the London Mathematical Society. Third Series},
      volume={97},
      number={1},
       pages={155\ndash 182},
}

\bib{KuPo}{article}{
      author={Kuznetsov, Alexander},
      author={Polishchuk, Alexander},
       title={Exceptional collections on isotropic {G}rassmannians},
        date={2016},
     journal={J. Eur. Math. Soc. (JEMS)},
      volume={18},
      number={3},
       pages={507\ndash 574},
}

\bib{KS2020}{article}{
      author={Kuznetsov, Alexander},
      author={Smirnov, Maxim},
       title={On residual categories for {G}rassmannians},
     journal={Proceedings of the London Mathematical Society},
      volume={120},
      number={5},
       pages={617\ndash 641},
}

\bib{Manin}{book}{
      author={Manin, Yuri~I.},
       title={Frobenius manifolds, quantum cohomology, and moduli spaces},
      series={American Mathematical Society Colloquium Publications},
   publisher={American Mathematical Society, Providence, RI},
        date={1999},
      volume={47},
}

\bib{Mironov}{article}{
      author={Mironov, Mikhail},
       title={Lefschetz exceptional collections in {$S_k$}-equivariant
  categories of $(\mathbb{P}^n)^k$},
      eprint={https://arxiv.org/abs/1807.01534},
}

\bib{PeSm}{article}{
      author={Perrin, N.},
      author={Smirnov, M.},
       title={On the big quantum cohomology of (co)adjoint varieties},
     journal={In preparation},
}

\bib{Pe}{article}{
      author={Perrin, Nicolas},
       title={Semisimple quantum cohomology of some {F}ano varieties},
      eprint={https://arxiv.org/abs/1405.5914},
}

\bib{Po}{article}{
      author={Polishchuk, A.},
       title={{$K$}-theoretic exceptional collections at roots of unity},
        date={2011},
        ISSN={1865-2433},
     journal={J. K-Theory},
      volume={7},
      number={1},
       pages={169\ndash 201},
}

\bib{VO}{book}{
      author={Vinberg, \`E.~B.},
      author={Onishchik, A.~L.},
       title={{Seminar po gruppam Li i algebraicheskim gruppam}},
     edition={Second},
   publisher={URSS, Moscow},
        date={1995},
}

\end{biblist}
\end{bibdiv}

\end{document}